\DeclareMathOperator{\gp}{GP}
\DeclareMathOperator{\kgp}{\mathbb K^{GP}}
\DeclareMathOperator{\loi}{\overset{d}{=}}
\DeclareMathOperator{\pp}{P}
\DeclareMathOperator{\dd}{d}
\DeclareMathOperator{\dgp}{d_{GP}}
\DeclareMathOperator{\supp}{supp}
\DeclareMathOperator{\exc}{exc}
\DeclareMathOperator{\dir}{Dir}
\DeclareMathOperator{\poidir}{PD}
\DeclareMathOperator{\mittagl}{ML}
\DeclareMathOperator{\cepsalpha}{\mathcal C_{\varepsilon^{1/\alpha}}}
\DeclareMathOperator{\epsalpha}{\varepsilon^{1/\alpha}}
\DeclareMathOperator{\cepsalphaf}{\mathcal C_{\varepsilon^{1/\alpha}}^\uparrow}
\DeclareMathOperator{\dimh}{\dim_{\mathrm{H}}}
\DeclareMathOperator{\dimms}{\overline{\dim_{\mathrm M}}}
\DeclareMathOperator{\dimmi}{\underline{\dim_{\mathrm M}}}
\DeclareMathOperator{\dimm}{\dim_{\mathrm M}}
\DeclareMathOperator{\nbigeps}{N_\varepsilon^{\textrm{b}}}
\DeclareMathOperator{\lbeta}{Beta}
\newcounter{HypN}
\newenvironment{EqHypothese}
  {\stepcounter{HypN}%
    \addtocounter{equation}{-1}%
    \equation}
  {\endequation}
\newtheorem{lthe}{Theorem}
\newtheorem{lpro}{Proposition}
\newtheorem{llem}{Lemma}
\newtheorem{lcor}{Corollary}
\theoremstyle{definition}
\theoremstyle{remark}
\newtheorem{lrem}{Remark}
\newcommand{\bbE}{\mathbb E}
\newcommand{\bbP}{\mathbb P}
\newcommand{\Xexc}{X ^{\exc}}
\newcommand{\smk}{s_k^{(m)}}
\newcommand{\dup}{\overline{d}}
\newcommand{\dlow}{\underline{d}}
\newcommand{\zb}{\overline{\zeta}}
\newcommand{\munp}[1]{{\mathcal M}_1^{#1}}
\newcommand{\old}{\color{red}}
\title{Self-similar random structures defined as fixed points of distributional equations}
\author{Lucas Iziquel}
\date{\today}
\begin{document}

\maketitle

\begin{abstract}
We consider fixed-point equations for probability distributions on isometry classes of measured metric spaces. The construction is required to be recursive and tree-like, but we allow loops for the geodesics between points in the support of the measure: one can think of a $\beta$-stable looptree decomposed around one loop. We study existence and uniqueness of solutions together with the attractiveness of the fixed-points by iterating. We obtain bounds on the Hausdorff and upper Minkowski dimension, which appear to be tight for the studied models. This setup applies to formerly studied structures as the $\beta$-stable trees and looptrees, of which we give a new characterization and recover the fractal dimensions.
\end{abstract}

\section{Introduction}

The Brownian Continuum Random Tree, or Brownian CRT, was first introduced by Aldous in \cite{aldous1991continuum}, and further studied in \cite{aldous1993continuum}. In the original setting, it arises as the scaling limit of uniformly random labelled trees, as well as the one of rescaled Galton-Watson trees with finite offspring variance. Since then, similar objects that arise as scaling limits of discrete structures have been studied. For instance, we can think of the $\beta$-stable trees for $1<\beta\leq 2$ studied by Duquesne and Le Gall in \cite{duquesne2002random,duquesne2005probabilistic} which are  particular cases of Lévy trees introduced by Le Gall and Le Jan in \cite{leGallleJan1998}. $\beta$-stable trees are the scaling limits of rescaled Galton-Watson trees whose offspring distribution is in the domain of attraction of a stable law, and generalize the Brownian CRT which corresponds to the $2$-stable tree. Another example is the stable looptrees first introduced by Curien and Kortchemski in \cite{curien2014random} which are the scaling limits of discrete looptrees.

These objects are random $\mathbb R$-trees or continuum trees, or at least tree-like if we want to include the stable looptrees which almost surely contain loops. They enjoy natural recursive decompositions as they appear when studying branching processes and fragmentation processes. A formal setup to deal with these objects is to define them as random variables in a set of equivalence classes of metric spaces; spaces also endowed with a probability measure which will allow us to sample points at random, and on which we eventually fix a root point.

We will focus on random objects that verify self-similarity properties: decomposing the entire space by removing some branchpoints will leave connected components which, after rescaling, are distributed as independent copies of the entire space conditionally on the removed branchpoints.
This is the case for the Brownian CRT (\cite{aldous1994recursive}) and for the $\beta$-stable trees (\cite{Marchal2008note}) but not for a general Lévy tree. More precisely, these self-similarity properties can take the form of a \emph{fixed-point equation} when we look at the distribution of the objects. The general setup presented with more details in Section \ref{sectiondescriptionmodele} can be outlined as follows: for a random variable $X$ taking values in the space $\mathbb S$ of "objects", which we want to be Polish, we study equations of the form:
\begin{equation} \label{distributionalfixedpointequation} 
 X \loi T \left( \left( X_i \right)_{i \geq 0} , \Xi \right),
\end{equation}
where $X_i$ are independent and identically distributed random variables with the same distribution as $X$, $T$ is a suitable map, and $\Xi$ represents an external source of randomness.

The first ones to consider this point of view for $\mathbb R$-trees are Albenque and Goldschmidt who proved in \cite{albenque2015brownian} that the Brownian CRT is the unique fixed-point of some natural decomposition first described by Aldous in \cite{aldous1994recursive}. This result has been generalized to construct a larger class of random real trees by Broutin and Sulzbach in \cite{broutin2016self}, from which we borrow the notations. Note also that the study of the $\beta$-stable trees by Rembart and Winkel in \cite{MR3846837} and also in \cite{chee2018recursive} with Chee fall in this general setting.
The fact that these classical objects verify a \emph{distributional fixed-point equation} pushes us to wonder about general properties of such an equation or their fixed-points. The first to come to mind when studying Equation \eqref{distributionalfixedpointequation} is whether there exists a fixed-point to this equation, and if it is unique at least in distribution. A subsequent natural question to ask is if the fixed-point can be recovered by iterating the appplication $T$ in Equation \eqref{distributionalfixedpointequation}. This is addressed in \cite{broutin2016self,albenque2015brownian,MR3846837} in different cases, and strongly depend on the Polish space $\mathbb S$ in which is considered the problem.

Given the existence of such fixed-points, one may try to deduce their geometric properties such as fractal dimensions, directly from the equation.
Indeed, this is a natural observation concerning classical deterministic fractals such as the curves of von Koch, or Cantor sets on the real line or in the plane. We refer to Falconer (\cite{falconer1990fractal}) for more details. It has been made rigorous in \cite{broutin2016self} in which both the almost sure Hausdorff dimension and upper Minkowsi dimension of the fixed-point appear as a parameter of the application $T$ under reasonable hypotheses. The Hausdorff dimension of the Brownian CRT, which was shown to be almost surely equal to $2$ in  \cite{duquesne2005probabilistic} was recovered this way. Their framework also extends to recursive triangulations of the disk (see \cite{broutin2015dual} and \cite{curien2011random}).
A similar result is stated in \cite{MR3846837}, giving the almost sure Hausdorff dimension of their fixed-points which applies for instance to the case of the $\beta$-stable trees with $1<\beta\leq2$.

This article aims at giving a general toolbox to study a \emph{distributional fixed-point equation} with specific parameters, which aims to be systematic. Respecting a tree structure, the rescaled copies are "glued" onto a random metric space. In \cite{broutin2016self}, this metric space is a finite collection of points and one only glues together a fixed finite number of copies. Here we consider the case when this space actually contains length with positive probability, and the fixed number of subspaces we concatenate for a given \emph{distributional fixed-point equation} will either be finite or countable. See Figure \ref{fig:intro} for an illustration.

\begin{figure}[!ht]
\label{fig:intro}
\includegraphics[scale=.3]{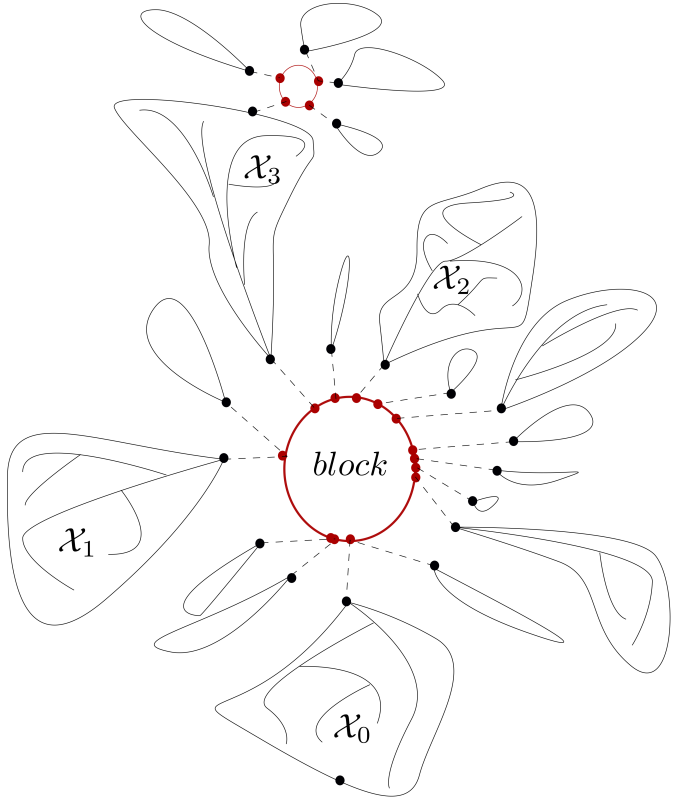}
\caption{ An example of gluing in our setting: in black are represented the rescaled copies of the input space $\mathcal X_i$, which are glued onto other metric spaces represented by the coloured circles and indicated as "blocks". Points linked by a dashed line are identified. }
\end{figure}

Our framework covers a number of examples of well-known random metric spaces:
\begin{itemize}
\item The $\beta$-stable trees for $1<\beta \leq 2$ from its spinal mass decomposition given by Haas, Pitman and Winkel in \cite{haas2009spinal}, the $2$-stable tree being the Brownian CRT,
\item The $\beta$-stable Looptrees from \cite{curien2014random} via a decomposition around one loop.
\end{itemize}

Because we want our theorems to apply not only to $\mathbb R$-trees but to a more general class of tree-like objects which include the stable looptrees, we need our proofs not to rely on the study of random real-valued functions encoding the spaces but directly on the spaces themselves. As a brief reminder, when considering $\mathbb R$-trees, a handy description of these spaces relies on their relation with  random functions which can be studied by powerful tools. See for instance \cite{evans2007probability,le2005random}:  $\mathbb R$-trees are often defined as $[0,1]$ endowed with a distance depending on a random function. Many of the results concerning random $\mathbb R$-trees in the litterature are deduced from the study of the random process coding the tree: our proofs are clear from this description and deal directly with the properties of the spaces themselves inside our Polish space $\mathbb S$.

The paper is organized as follows. In Section \ref{section2preliminaries}, after introducing our notations, we give the necessary background about the objects we consider, which concerns mostly Hausdorff-like metrics and the corresponding spaces; then we describe the class of maps $T$ we consider, and the important parameters to look at such as the height of a random point; lastly we recall definitions and results about geometric properties of metric spaces. Our main results are stated in Section \ref{sectionresults}, followed by applications to previously studied objects and a brief description of the main techniques. Section \ref{sectionexuniq} contains the proof of the first theorems, concerning existence, uniqueness and convergence of iterated processes towards the possible solutions. Section \ref{sectiongeo} is devoted to the proofs of the geometric properties of the fixed-points. Section \ref{sec:appliproof} regroups proofs concerning applications.

\newpage

\setcounter{tocdepth}{2}
\tableofcontents

\newpage

\section{Preliminaries}
\label{section2preliminaries}
\subsection{Rooted plane trees and notations}
Let $\mathcal{U}:=\bigcup\limits_{n=0}^\infty (\mathbb Z^+)^n$ be the set of finite words on the set of positive integers $\mathbb Z^+:=\{1,2,\ldots \}$. For two words $u=(u_1, \ldots , u_n)$ and $v=(v_1 , \ldots , v_m)$ both in $\mathcal U$, let $uv$ be the concatenation of $u$ and $v$ defined as $(u_1 , \ldots , u_n ,v_1 , \ldots , v_m)$.

A rooted plane tree $t$ is a subset of  $\mathcal{U}$ such that $\emptyset$ is in $t$, if $v=uk$ is in $t$ with $k \in \mathbb Z^+$ and $u \in \mathcal{U}$ then $u$ is in $t$, and if $uk \in t$ for $(u,k) \in \mathcal{U} \times \mathbb Z^+$, then $uj \in t$ for all $ j \leq k$.

On a tree $t$, for $i$ a vertex in $t$ we consider $t_i$ the subtree of $t$ rooted in $i$: it contains $i$ and all the vertices with prefix $i$ which are in $t$. Let $\kappa_i$ denote the set of direct children of $i$, meaning every node $ik$ for $k \in \mathbb Z^+$ such that $ik \in t$, this set is empty whenever $i$ is a leaf. If $i=(i_1, \ldots , i_n)$, we call $n$ the generation or height of $i$ and write $\vert i \vert :=n$. We also define the father of $i$ in $t$ being simply $i-:=i_1 \ldots i_{n-1}$, where only $\emptyset$ has no father. \label{notasubtree}

For $j$ another vertex in $t$, let $q(i,j)$ be the most recent common ancestor of $i$ and $j$, which is also their largest common prefix. We denote by $\pi(i,j)$ the set of vertices in the only path between $i$ and $j$ in $t$ including both ends $i$ and $j$, to which we then remove $q(i,j)$. \label{notapath}

\subsection{Metrics and convergence}

\subsubsection{$\gp$-isometry classes}

We refer in this section to the basic setup from \cite{broutin2016self}, and to \cite{evans2007probability,gromov2007metric,greven2009convergence} for more details and proofs.

We will focus on rooted measured metric spaces $\mathfrak X= \left( X,d,\mu,\rho\right)$. Such a space is a complete separated metric space $\left( X,d \right)$ endowed with a probability measure $\mu$, and a distinguished point $\rho$ in $X$. Between two rooted measured metric spaces $\mathfrak X = \left( X,d,\mu,\rho\right)$ and $ \mathfrak X' =  \left( X',d',\mu',\rho' \right)$ we define the \emph{Gromov-Prokhorov} distance by
\[ \dgp \left(\mathfrak X, \mathfrak X' \right) := \inf_{ Z , \phi , \phi ' } \left\{ d^Z \left( \phi ( \rho ) , \phi ' ( \rho ' ) \right) + \dd_{\pp} ^Z \left( \phi_* ( \mu ) , \phi_* '( \mu ') \right) \right\},\]
where the infimum is taken over all metric spaces $(Z,d^Z)$, and isometries $\phi : X \rightarrow Z$ and $\phi' : X' \rightarrow Z$. Here $\phi_* (\mu)$ is the push-forward of $\mu$, and $\dd_{\pp} ^Z$ denotes the \emph{Prokhorov} metric on the set of probability measures on $Z$, that is for all $\nu, \delta$ probability measures on $Z$:
\begin{multline*}
\dd_{\pp} ^Z \left( \nu,\delta \right) := \inf \left\{ \varepsilon>0 \: ; \: \nu(A) \leq \delta(A^\varepsilon) + \varepsilon \mbox{ and } \delta(A) \leq \nu(A^\varepsilon) + \varepsilon \mbox{ for all} \right. \\ \left. \mbox{measurable sets } A \right\}
\end{multline*}
with $A^\varepsilon:= \{ x \in Z : d^Z(x,A) \leq \varepsilon \}$.

We say that two rooted measured metric spaces are $\mathfrak X$ and $\mathfrak X'$ $\gp$-isometric if $\dgp (\mathfrak X, \mathfrak X')=0$, which is equivalent to the existence of an isometry $\phi$ from $\supp(\mu)$ onto $\supp(\mu')$ ($\supp(\mu)$ denotes the support of $\mu$) such that $\phi(\rho)=\rho'$ and $\phi_*(\mu)=\mu'$. The pseudometric $\dgp$ induces a metric on the set $\kgp$ of $\gp$-isometry classes of rooted measured metric spaces and turns this set into a Polish space \cite{greven2009convergence}. Hence we can and will consider some random variables $\mathcal X$ taking values in this set.

\begin{lrem}
Spaces inside a $\gp$-isometry class, are not necessarily all isometric. For instance, one can think of the triangle with mass $\frac{1}{3}$ at each node, which is in the same $\gp$-isometry class as the unrooteed tree with $3$ leaves each with mass $\frac{1}{3}$ and one internal node. We also stay in the same class if we add massless portions outside of the support of the measure, each by only one link so that no chord is created between points of the support.
\end{lrem}

For a rooted measured metric space $\mathfrak X=\left(X, d,\mu,\rho\right)$, we denote by $h(\mathfrak{X}):= \sup \left\{d (\rho,x) : x \in \supp(\mu)\right\} \in \mathbb R ^+ \cup \{+\infty\}$ its essential height. Since this is constant on $\gp$-isometry classes, this extends naturally to a class function, defined on $\gp$-isometry classes.

\subsubsection{Distance matrix distribution}
\label{matrixdist}

We give another description of $\gp$-isometry classes of rooted measured metric spaces. It follows the general idea of Gromov in \cite{gromov2007metric} Section $3.\frac{1}{2}$ and the setup from Greven, Pfaffelhuber and Winter in \cite{greven2009convergence}. It is extended to random variables with values in $\kgp$.

For $k \in \mathbb N$, let $\mathbb M_k:= \{(m_{ij})_{0 \leq i,j \leq k} ; \forall i,j \: m_{ij}=m_{ji} \geq 0,m_{ii}=0\}$ be the set of symmetric $(k+1)$ by $(k+1)$ matrices with non-negative entries and zeroes on the diagonal. We also write $\mathbb M_{\mathbb N}$ for the set of infinite dimensional matrices satisfying these properties. For a rooted measured metric space $\mathfrak X = \left( X,d,\mu , \rho \right)$, let $\zeta_1 , \zeta_2 , \ldots$ be independent points with distribution $\mu$ on $X$, and set $\zeta_0:= \rho$. The random matrix $(d(\zeta_i,\zeta_j))_{i,j \geq 0}$ is in $\mathbb M_{\mathbb N}$, and its distribution does not depend on the representative of the $\gp$-isometry class of $\mathfrak X$. Hence, we can define the \emph{distance matrix distribution} $\nu^{\mathfrak X} \in \mathcal  M_1(\mathbb  M_{\mathbb N})$ (the set of probability measures on $\mathbb M_{\mathbb N}$) for any element $\mathfrak X$ in $\kgp$. For a probability distribution $\mathbf P$ on $\kgp$, we also define the probability measure
\begin{equation}
\label{nudelta}
\nu^{\mathbf P} (A) := \int \nu^{\mathfrak X} (A) \mathrm d \mathbf P(\mathfrak X) , \quad A \subset \mathbb M_{\mathbb N} \mbox{ measurable.}
\end{equation}

The importance of $\nu^{\mathfrak X}$ resides in Gromov's reconstruction theorem which we adapt to our notations:
\begin{lpro}{(\cite{gromov2007metric} Section $3.\frac{1}{2}$)}
For $\mathfrak X, \mathfrak U \in \kgp$, we have $\nu^{\mathfrak X} =\nu^{\mathfrak U} $ if and only if $\mathfrak X = \mathfrak U$.
\end{lpro}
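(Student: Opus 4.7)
The forward direction is immediate from the definition of $\gp$-isometry. If $\mathfrak X = \mathfrak U$ in $\kgp$, pick representatives and an isometry $\phi : \supp(\mu) \to \supp(\mu')$ with $\phi(\rho) = \rho'$ and $\phi_*\mu = \mu'$. Then $(\phi(\zeta_i))_{i\ge 1}$ is i.i.d.\ of law $\mu'$ when $(\zeta_i)_{i\ge 1}$ is i.i.d.\ of law $\mu$, and the distance matrices coincide pointwise, so $\nu^{\mathfrak X}=\nu^{\mathfrak U}$.

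For the nontrivial converse my plan is reconstruction: almost every realization of the i.i.d.\ sample already determines the underlying GP-isometry class. Fix a representative $\mathfrak X = (X,d,\mu,\rho)$ and let $\zeta_0 = \rho$, $(\zeta_i)_{i\ge 1}$ i.i.d.\ of law $\mu$. The first step is to verify that almost surely $\{\zeta_i : i\ge 1\}$ is dense in $\supp(\mu)$: this follows by a standard covering argument, since any open ball $B$ in $\supp(\mu)$ has positive $\mu$-mass by definition of the support, and hence $\mu(\bigcap_n \{\zeta_i \notin B \text{ for } i\le n\}) = 0$; taking a countable basis of open balls finishes this step. The second step is to notice that the i.i.d.\ random matrix $M = (d(\zeta_i,\zeta_j))_{i,j\ge 0}$, viewed purely as an element of $\mathbb M_{\mathbb N}$, lets one rebuild the whole rooted measured metric space: complete the pseudo-metric space $(\mathbb N, M)$ (modding out zero-distance pairs, which is irrelevant here because $\mu$ has no atoms contributed by independent draws except trivially) to obtain a complete metric space $(\widehat X,\widehat d)$; declare the root to be the image of the index $0$; and equip $\widehat X$ with the measure $\widehat\mu$ obtained as the almost sure weak limit of the empirical measures $\frac 1 n \sum_{i=1}^n \delta_{\zeta_i}$ (Glivenko–Cantelli / Varadarajan). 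By the density step, the canonical map sending $\zeta_i$ to its image in $\widehat X$ is an isometry from $\supp(\mu)$ onto $\supp(\widehat\mu)$, sends $\rho$ to the image of $0$, and pushes $\mu$ to $\widehat\mu$. Hence $\widehat{\mathfrak X} := (\widehat X,\widehat d,\widehat\mu,\text{image of }0)$ lies in the same $\gp$-isometry class as $\mathfrak X$, and this reconstruction depends only on $M$, measurably.

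Now assume $\nu^{\mathfrak X}=\nu^{\mathfrak U}$. Couple the two samples so that they produce the \emph{same} random matrix $M \in \mathbb M_{\mathbb N}$ almost surely (just realize both on the same probability space as a single draw from their common law). Applying the reconstruction map to $M$ yields, on the one hand, a space in the class of $\mathfrak X$ and, on the other, a space in the class of $\mathfrak U$; but by construction these two outputs coincide as rooted measured metric spaces. Therefore $\mathfrak X = \mathfrak U$ in $\kgp$.

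The main obstacle I anticipate is the measurability and well-posedness of the reconstruction map $M \mapsto \widehat{\mathfrak X}$: one must ensure that the set of matrices $M \in \mathbb M_{\mathbb N}$ for which the above construction succeeds (density of the sample in its own completion, weak convergence of empirical measures to a Borel probability, non-degenerate support) is a Borel set of full $\nu^{\mathfrak X}$-measure, and that the output is well-defined as an element of $\kgp$. Everything else is bookkeeping around Varadarajan's theorem and the definition of $\gp$-isometry.
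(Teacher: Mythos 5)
The paper does not prove this proposition; it is quoted from Gromov's book (and the measured-space version is treated in detail in Greven--Pfaffelhuber--Winter and Vershik's work on random matrices of distances). Your reconstruction argument is precisely the standard proof behind the citation: density of an i.i.d.\ sample in $\supp(\mu)$, completion of the sampled pseudometric space, recovery of $\mu$ via empirical measures and Varadarajan's theorem, and recovery of the root from the $0$th index. Coupling the two matrix distributions and noting the reconstruction depends only on $M$ then closes the argument, so the proof is essentially correct.

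Two small remarks. First, the parenthetical ``modding out zero-distance pairs\ldots is irrelevant here because $\mu$ has no atoms contributed by independent draws except trivially'' is imprecise: $\mu$ may certainly have atoms, in which case independent draws coincide with positive probability. The identification is therefore \emph{not} vacuous; the correct point is that it is harmless (it just collapses coincident samples), and the empirical-measure convergence is unaffected. Second, since $\rho$ need not lie in $\supp(\mu)$, the isometry you obtain is really on $\supp(\mu)\cup\{\rho\}$ rather than just $\supp(\mu)$; the $\gp$-pseudometric records the distance from the root to the support, so the image of index $0$ and the completion of indices $\geq 1$ must be kept track of separately. Both points are cosmetic and do not affect the validity of the argument. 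The measurability concern you flag at the end is indeed the only genuinely technical step and is handled carefully in the references cited by the paper.
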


The counterpart concerning probability measures can be stated as follows:
\begin{lpro}{(Corollary 3.1 in \cite{greven2009convergence})}
\label{critereCVmesures}
We have the following results:
\begin{enumerate}
\item[(i)] For $\mathbf P_1,\mathbf P_2 \in \mathcal M_1(\kgp)$, we have $\mathbf P_1=\mathbf P_2$ if and only if $\nu^{\mathbf P_1}=\nu^{\mathbf P_2}$.
\item[(ii)] For probability distributions $\mathbf P_n$, for $n \geq 1$, on $\kgp$, the sequence $(\mathbf P_n)_{n \in \mathbb N}$ converges weakly if and only if $(\nu^{\mathbf P_n})_{n \in \mathbb N}$ converges weakly and the family $(\mathbf P_n)_n$ is tight.
\end{enumerate}
\end{lpro}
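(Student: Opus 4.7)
The plan is to use Gromov's reconstruction theorem (the preceding proposition) as the central input and to reduce both assertions to a single inversion principle: the distance matrix distribution determines the $\gp$-isometry class in a measurable way, so that the map $\mathbf P \mapsto \nu^{\mathbf P}$ can be inverted at the level of mixtures.

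For (i), one direction is immediate from the definition \eqref{nudelta}. For the converse, I would construct a measurable map $\Psi : \mathbb M_{\mathbb N} \to \kgp$ with the property that $\Psi_*(\nu^{\mathfrak X}) = \delta_{\mathfrak X}$ for every $\mathfrak X \in \kgp$. The idea is that from one realization $M = (m_{ij})_{i,j \geq 0}$ drawn from $\nu^{\mathfrak X}$ one recovers $\mathfrak X$ almost surely: the sample $(\zeta_k)_{k \geq 1}$ is i.i.d.\ $\mu$, so by the law of large numbers the empirical measures $n^{-1} \sum_{k=1}^n \delta_{\zeta_k}$ converge weakly to $\mu$ and their support is dense in $\supp(\mu)$; the entries of $M$ encode the metric on $\{\zeta_k\}_{k \geq 0}$, with $\zeta_0 = \rho$ as the root. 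Completing this countable skeleton and equipping it with the limit measure yields a representative of $\mathfrak X$, and by Gromov's theorem this is uniquely identified as a $\gp$-isometry class. Once $\Psi$ is shown to be measurable, a Fubini argument gives $\Psi_*(\nu^{\mathbf P}) = \int \delta_{\mathfrak X} \, \mathrm d \mathbf P(\mathfrak X) = \mathbf P$, so that $\nu^{\mathbf P_1} = \nu^{\mathbf P_2}$ implies $\mathbf P_1 = \mathbf P_2$.

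For (ii), the forward direction uses that the sampling map $\Phi : \mathfrak X \mapsto \nu^{\mathfrak X}$ is continuous from $(\kgp, \dgp)$ to $\mathcal M_1(\mathbb M_{\mathbb N})$ equipped with weak convergence; this is essentially built into the definition of $\dgp$, since a near-optimal embedding into a common Polish space allows to couple the measures so that the sampled distance matrices converge jointly in distribution. Weak convergence $\mathbf P_n \to \mathbf P$ then transfers to $\nu^{\mathbf P_n} = \Phi_*(\mathbf P_n) \to \Phi_*(\mathbf P) = \nu^{\mathbf P}$, and tightness of $(\mathbf P_n)$ is automatic by Prokhorov's theorem. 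Conversely, tightness yields relative compactness of $(\mathbf P_n)$ in $\mathcal M_1(\kgp)$; any weak subsequential limit $\mathbf Q$ must satisfy $\nu^{\mathbf Q} = \lim_n \nu^{\mathbf P_n}$ by the forward direction applied along the subsequence, so by (i) every subsequential limit coincides, forcing the full sequence to converge.

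The main obstacle is the measurability of $\Psi$: one has to define it on a universally measurable subset of $\mathbb M_{\mathbb N}$ of full $\nu^{\mathfrak X}$-measure for each $\mathfrak X$, handle the completion and the choice of canonical $\gp$-representative measurably, and check compatibility with the Borel structure on $\kgp$ induced by $\dgp$. Once this is in place, the continuity of $\Phi$ and the standard subsequence argument in (ii) are routine.
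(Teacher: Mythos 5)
The paper cites this result as Corollary 3.1 of Greven, Pfaffelhuber and Winter and gives no proof of its own, so there is no internal argument to compare against; the proposition is simply invoked as a known characterization of $\kgp$. Your outline reconstructs the standard argument correctly, and it is in fact the approach taken in the cited reference: the essential tools are a measurable reconstruction map $\Psi:\mathbb M_{\mathbb N}\to\kgp$ with $\Psi_*(\nu^{\mathfrak X})=\delta_{\mathfrak X}$ (this is Gromov's reconstruction together with the law of large numbers for the empirical measures $n^{-1}\sum_{k\leq n}\delta_{\zeta_k}$), the pushforward identity $\Psi_*(\nu^{\mathbf P})=\mathbf P$ for part (i), continuity of $\mathfrak X\mapsto\nu^{\mathfrak X}$ in the Gromov--Prokhorov topology for the forward direction of (ii), and the Prokhorov/subsequence argument for the converse. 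You correctly identify the measurability of $\Psi$ as the technical crux: one must verify that the reconstruction procedure (completion of $(\mathbb N,m)$, weak limit of the empirical measures, choice of $\gp$-class) can be carried out on a Borel subset of $\mathbb M_{\mathbb N}$ carrying full mass under every $\nu^{\mathfrak X}$, which is developed in \cite{greven2009convergence} and in \cite{gromov2007metric}. Beyond filling in that point, the structure of the argument is sound.
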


For a random variable $\mathcal X$ taking values in $\kgp$, we will also use the notation $\bbE [ \nu^{\mathcal X}]$ defined as $\nu^{\mathfrak L(\mathcal X)}$ (where $\mathfrak L(\mathcal X)$ stands for the distribution of $\mathcal X$) or more precisely as follows on every measurable set $A$:
\begin{equation*}
\bbE [\nu^{\mathcal X}](A):=\bbE [\nu^{\mathcal X}(A)].
\end{equation*}
Proposition \ref{critereCVmesures} allows us to compare the distance matrix distributions of random variables with values in $\kgp$, rather than computing directly the $\gp$ distance.

Most of the results from \cite{buragoburagoivanov,greven2009convergence,gromov2007metric} that are relevant to our study are stated in the unrooted case (the measured metric spaces are not rooted). The adjunction of a root only induces straightforward modifications and we omit the details.

\subsection{Fractal properties of metric spaces, fractal dimensions}

We introduce notations and recall some definitions concerning fractal dimensions in a general setting, for a generic metric space $(X,d)$.

Let $\delta>0$, for a relatively compact subset $B \subset X$, let $N_B(\delta)$ be the smallest cardinality of a set of balls of radius $\delta$ covering $B$. The Minkowski dimension quantifies the power-law behaviour of $N_B(\delta)$ as $\delta$ tends to zero. More precisely, we differentiate the \emph{lower Minkowski dimension} $\dimmi(B)$ and the \emph{upper Minkowski dimension} $\dimms(B)$:
\begin{equation}\label{defdimm}
\dimmi(B):=\liminf\limits_{\delta \rightarrow 0} \frac{\log N_B(\delta)}{-\log \delta} , \quad \dimms(B):=\limsup\limits_{\delta \rightarrow 0} \frac{\log N_B(\delta)}{-\log \delta}.
\end{equation}

If both values coincide, we will denote the common value $\dimm (B)$ and call it the \emph{Minkowski dimension} of $B$.

Another fractal dimension can be defined as follows. We define first the family $\left(H^s\right)_{s >0}$ of \emph{$s$-dimensional Hausdorff measures} given for any $A \subset X$ by:
\begin{equation*}
H^s\left(A\right):= \lim\limits_{\delta \rightarrow 0} \left\{ \sum\limits_{i \geq 1} \vert U_i \vert^s : A \subset \bigcup_{i \geq 1} U_i \mbox{ and } \vert U_i \vert \leq \delta \mbox{ for all } i \geq 1 \right\},
\end{equation*}
where $\vert U \vert:= \sup \{ d(x,y) : x,y \in U\}$ denotes the diameter of $U$ for any $U \subset S$. The \emph{Hausdorff dimension} of $A$ is then defined by:
\begin{equation*}
\dimh \left( A \right)= \inf \left\{s \geq 0 : H^s(A)=0\right\}.
\end{equation*}

The following property will be useful in order to get a lower bound on the \emph{Hausdorff dimension} of a set. We refer to \cite[Proposition 4.9]{falconer1990fractal} or \cite[Theorems 6.9 and 6.11]{mattila1995geometry} for formulations in $\mathbb R^d$ which both extend to separable metric spaces.
Let $B_r(x):=\left\{y \in X : d(x,y)<r\right\}$ denote the open ball of radius $r$ centered in $x$. Then for a measurable set $A \subset X$, a finite measure $\nu$ on $X$ with $\nu(A)>0$ and $c>0$, we have
\begin{equation}\label{frostman}
\limsup\limits_{r \rightarrow 0} \frac{\nu \left(B_r(x)\right)}{r^s} \leq c \mbox{ for all } x \in A \quad \Rightarrow \quad \dimh(A)\geq s.
\end{equation}

Furthermore, for a bounded non-empty set $B$, we have
\begin{equation}\label{eq:reldimf}
\dimh(B) \leq \dimmi(B) \leq \dimms(B).
\end{equation}

\subsection{Dirichlet and Poisson-Dirichlet distributions}
\label{dirpoidir}
We recall definitions and fix notations for several probability distributions that will appear further on.

For $a,b>0$, we write $\lbeta(a,b)$ for the beta distribution with parameters $a,b$, whose dentisty with respect to the Lebesgue measure on $(0,1)$ is given by
\begin{equation*}
\frac{\Gamma(a+b)}{\Gamma(a) \Gamma(b)} x^{a-1}(1-x)^{b-1}.
\end{equation*}

A multivariate counterpart of the beta distribution is the \emph{Dirichlet distribution}. For $\beta := (\beta_1,\ldots \beta_n)$ a family of positive real numbers, the \emph{Dirichlet distribution} with parameter $ \beta $, denoted $\dir( \beta )$ is a distribution on the $(n-1)$-dimensional simplex $\Sigma_n:= \big\{ \mathbf{x} = (x_i) \in (0,1)^n : \: \sum_{i =1}^n x_i =1 \big\}$. Its density with respect to the $(n-1)$-dimensional Lebesgue measure is given by
\begin{equation*}
\frac{\Gamma\left(\sum_{i=1}^n \beta_i\right)}{\prod_{i=1}^n \Gamma(\beta_i)} \prod_{i=1}^n x_i^{\beta_i-1}.
\end{equation*}

Let us now define the \emph{Poisson-Dirichlet} distribution with parameters $a,b$ satisfying $a \in (0,1)$ and $b>-a$, denoted $\poidir(a,b)$. We follow Pitman and Yor in \cite{pitmanyor1997two}. First, we consider a family $W_i$ of independent random variables with $W_i \sim \lbeta(1-a,b+ja)$ for each $i \geq 1$. We then construct the sequence of random variables $(P_i)$ where
\begin{equation*}
P_i:= W_i \prod_{j=1}^{i-1} \left(1-W_j\right)
\end{equation*}
and reorder this sequence in the decreasing order to obtain $(P_i^\downarrow)_{i \geq 1}$. This sequence is said to have $\poidir(a,b)$ distribution, which takes values in the set $\Sigma^\downarrow:= \big\{ (x_i)_{i \in \mathbb Z^+} : \: 1\geq x_1 \geq x_2 \geq \ldots \geq 0 \mbox{ and } \sum_i x_i =1 \big\}$. A size-biased pick $V$ from a sequence $(P_i^\downarrow)$ with $\poidir(a,b)$ distribution has $\lbeta(1-a,a+b)$ distribution.

For more details about \emph{Poisson-Dirichlet} distributions, we refer to \cite[Chapter 3]{pitman2006combinatorial} for links with an urn process and to Chapter 4 for a description in term of lengths of excursions of Lévy processes.

%These distributions appear naturally when considering exchangeable random partitions, constructed by the \emph{Chinese Restaurant Process} for which we refer to \cite{pitman2006combinatorial} Chapter 3. However, the first considerations link them with the jumps of a subordinator, or with the length of the excursions of some Markov process listed in \cite{pitman2006combinatorial} Chapter 4.

We also consider the degenerate case where $b=-a$: we set $\poidir(a,-a)$ to be a dirac distribution on the sequence $(1, 0 ,0 , \ldots)$.

Given a sequence $(P_i^\downarrow)_i$ with $\poidir(a,b)$ distribution with $b>-a$, the $a$-diversity is defined as the almost sure limit
\begin{equation}\label{diversity}
\Gamma(1-a) \lim_{n \rightarrow +\infty} n (P_n^\downarrow)^a,
\end{equation}
which exists almost surely. This random variable has density $\mittagl (a,b)$ which stands for the \emph{General Mittag-Leffler distribution}. We refer again to Pitman \cite{pitman2006combinatorial} Section 0.3 for a definition, and to Chapter 3 for links with the \emph{Poisson-Dirichlet} distributions. An important property of \emph{Mittag-Leffler} distribution is that they have finite moments of all orders.

\section{Setting}

We define the general model in Subsection \ref{sectiondescriptionmodele}, then we precise hypotheses concerning the recursion itself together with hypotheses concerning the geometry in Subsection \ref{sec:hyp}. Lastly we discuss other models in Subsection \ref{sec:related}.

\subsection{A recursive decomposition of metric spaces}

\label{sectiondescriptionmodele}
In this section, we introduce a general model to describe metric spaces by a recursive decomposition. We make rigorous the idea represented in Figures \ref{fig:intro} and \ref{exconcretphi}: we glue rescaled copies of a random metric space onto random blocks, following a tree structure.

Let $\Gamma$ be a rooted plane tree with $K$ vertices where $K \in \overline{\mathbb Z^+} = \mathbb Z^+ \cup \{+\infty\}$. We say that $\Gamma$ is the \emph{structural tree} of our construction.
Let $\mathbf{r}, \mathbf{s} \in \Sigma_\Gamma := \big\{ \mathbf{x} = (x_i) \in (0,1)^\Gamma : \: \sum_{i \in \Gamma} x_i =1 \big\}$, and let $\alpha \in (0,1)$.

For each vertex $k \in \Gamma$, let $(B_k,\ell_k)$ be a compact metric space with marked points denoted by $b_{k,i}$ for any $i \in \kappa_k \cup \{k\}$ : hence there are as many points as the degree of $k$ in the tree $\Gamma$, plus $1$ for the root (recall the notations from Section \ref{section2preliminaries}). These metric spaces will be referred as "blocks" or "added length", which will be motivated after the description of the recursion. A simple yet fundamental example is the case when a block is a circle homeomorphic to $\mathbb S^1$.

{\old
}

Given such parameters $\Gamma$, $\mathbf{r}$, $\mathbf{s}$, $\alpha$ and $(B_k,\ell_k)_k$, we can now describe our construction step by step. Let $\mathfrak{X}_i := \left(X_i,d_i,\mu_i,\rho_i \right)$, $i \in \Gamma$, be rooted measured metric spaces to which we refer as the \emph{input} spaces, we construct $\mathcal{X} := \left(X,d,\mu,\rho \right)$ as follows:
\begin{enumerate}
    \item  Let $\eta_i$ be independent random variables, where each $\eta_i$ is sampled on $X_i$ according to the probability measure $\mu_i$.
    \item \label{construction_ii} Let $X^\circ$ be the disjoint union: $X^\circ:= \left(\bigsqcup\limits_{i \in \Gamma} X_i\right) \bigsqcup \left(\bigsqcup\limits_{i \in \Gamma} B_i\right)$.
    \item Let $d^\circ$ be the maximal pseudometric on $X^\circ$ such that the restriction of $d^\circ$ on $X_i$ is no greater than $r_i^\alpha d_i$ for all $i \in \Gamma$ and no greater than $\ell_i$ when restricted on each $B_i$. In addition, we require that for each $i \in \Gamma$, $d^\circ(\rho_i,b_{i-,i})=0$ and $d^\circ(\eta_i,b_{i,i})=0$. See for instance \cite{buragoburagoivanov} Section 3.1.3 which ensures existence and uniqueness of $d^\circ$ defined by gluing.
    \item Let $\mu^\circ$ be the unique probability measure on $X^\circ$ compatible with $s_i\mu_i$ on $X_i$ for every $i \in \Gamma$.
    \item Let $X:=X^\circ / \sim$ where $x \sim y$ if and only if $d^\circ(x,y)=0$. Let $d$ be the metric induced by $d^\circ$ on $X$. Let $\mu$ be the push-forward of $\mu^\circ$ under the canonical projection $\varphi^\circ$ from $X^\circ$ onto $X$.
    \item Lastly let $\rho:=\varphi^\circ (\rho_1)$ be the root.
\end{enumerate}

\begin{figure}
\begin{center}
	\begin{tikzpicture}[scale=.7]
		\node[draw,circle,fill=black] (1) at (0,0) {};
		\node[draw,circle] (2) at (-1,1) {};
		\node[draw,circle] (3) at (-1.5,2) {};
		\node[draw,circle] (4) at (-.5,2) {};
		\node[draw,circle] (5) at (0,1) {};
		\node[draw,circle] (6) at (1,1) {};
		\node[draw,circle] (7) at (1,2) {};
		\node (8) at (1,0) {$\Gamma$};
		
		\draw (1)--(2);
		\draw (2)--(3);
		\draw (2)--(4);
		\draw (1)--(5);
		\draw (1)--(6);
		\draw (6)--(7);
	\end{tikzpicture}
	\includegraphics[scale=.25]{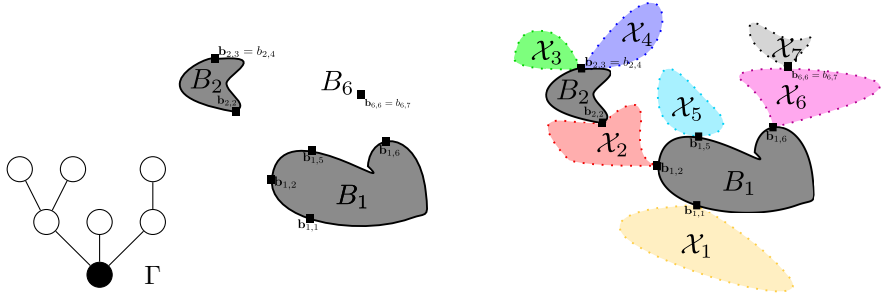}
\end{center}
	\caption{\label{exconcretphi} The construction of $\mathcal X$ with input spaces $(\mathfrak X_i)_{1\leq i\leq 7}$ with the structural tree $\Gamma$, the relevant blocks are described in the middle drawing with the marked points onto them (we write for instance $b_{2,3}=b_{2,4}$ to illustrate that the distance between the points is $0$ in the corresponding block). Parameters $\mathcal R, \mathcal S$ are not described here.}
\end{figure}

The $\gp$-equivalence class of the output $\mathcal X$ is random since the points $(\eta_i)$ were chosen randomly in the process. Moreover, it is a random variable (one may adapt Lemma 29 from \cite{broutin2016self}). From the point of view in Section \ref{matrixdist} with the distance matrix distribution, this random variable only depends on the $\gp$-equivalence classes of the input spaces $\left(\mathfrak X_i\right)_{i \in \Gamma}$ together with the distances between each couple of marked points in the metric spaces $(B_k,\ell_k)$. Indeed, the $\gp$-equivalence class only depends on distances between points picked under the measure of the space and the blocks $B_k$ for $k \in \Gamma$ are not weighted.

To be more explicit about the construction, we compute here the distance between the points $x \in \varphi^\circ(X_i)$ and $y \in \varphi^\circ(X_j)$ in the output space $\mathcal X$. Recall that regarding the $\gp$-isometry classes, we only need these distances and do not require to specify distances between points which are not weighted (e.g. the points inside the blocks).
Two situations have to be considered, depending on whether one of the vertices $i,j$ is an ancestor of the other one in $\Gamma$.
\paragraph{ If $i \notin \Gamma_j$ and $j \notin \Gamma_i$} The distance between $x$ and $y$ is given by:
\begin{multline}\label{eq:descrdist1}
d(x,y)= \sum_{k\in \pi_{i,j}} \ell_{k-}(b_{k-,k},b_{k-,k-}) +\ell_{q(i,j)} (b_{q(i,j) , i} , b_{q(i,j) , j })  \\+ r_i^\alpha d_i(\rho_i,x) + r_j^\alpha d_j(\rho_j,y)
+ \sum\limits_{k \in \pi_{i,j} \: k \neq i,j} r_k^\alpha d_k(\rho_k,\eta_k),
\end{multline}
where the  notation $\pi_{i,j}$ was defined in Section \ref{notapath}: the path between two vertices excluding their closest common ancestor $q(i,j)$ is the set of vertices to consider since the geodesic from $x$ to $y$ does not "cross" the space $\varphi^\circ (X_{q(i,j)})$ in this case.

\paragraph{If $j \in \Gamma_i$} The symmetric case $i \in \Gamma_j$ being the same, we essentially have to change $\rho_i$ into $\eta_i$ for the distance inside $\varphi^\circ (X_i)$ since the branching "goes up" and get:
\begin{multline}\label{eq:descrdist2}
d(x,y)=\sum_{k\in \pi_{i,j}} \ell_{k-}(b_{k-,k},b_{k-,k-})  + r_i^\alpha d_i(\eta_i,x) + r_j^\alpha d_j(\rho_j,y)\\+ \sum\limits_{k \in \pi_{i,j} \: k \neq i,j} r_k^\alpha d_k(\rho_k,\eta_k).
\end{multline}

The distance between any couple of points in the image of an input space by the canonical projection $\varphi^\circ$ can always be decomposed according to the former or the latter expression, or by $r_i^\alpha d_i(x,y)$ if they fall in the same $\varphi^\circ (X_i)$.

We consider the following parallel construction to the one described earlier in this section: keep the same parameters $\Gamma, \mathbf r, \mathbf s, \alpha, (B_k,\ell_k)_k$ as previously but take singletons for the input spaces $\mathfrak{X_i}$. The output will then be a rooted measured metric space which consists in the spaces $B_i$ glued together so that for each $i\in \Gamma$, $b_{i,i}$ is identified with $b_{i-,i}$: write $b_i$ for their common canonical projection. We denote this space by $(B, \ell)$ on which we still have marked points $(b_i)_{i \in \Gamma}$. It is rooted in $b_\emptyset$ and is endowed with a discrete probability measure that allocates mass $s_i$ to $b_i$ for all $i \in \Gamma$.

From Section \ref{matrixdist}, the $\gp$-equivalence class of $B$ is characterized by  the masses $\mathbf s$ (which already appear in the parameters of the recursion) together with the matrix $L$ with coefficients $L(i,j)=\ell(b_i,b_j)$ for all $(i,j) \in \Gamma^2$. For this sake, we introduce the notation $D_\Gamma$ for the corresponding space of infinite positive symmetric matrices so that the tree structure of $\Gamma$ is respected.

\begin{lrem} The description of this metric space $(B ,\ell)$ is crucial in our study. As precised before, the $\gp$-equivalence class of the resulting space $\mathcal X$ (with general input spaces $\mathfrak X_i$), when it comes to the dependence to the blocks, depends only on the $\gp$-equivalence class of $(B,\ell, \sum_i s_i \delta_{b_i},b_\emptyset)$  and not on the entire metric spaces $(B_i)$ for all $i\in \Gamma$.
\begin{itemize}
\item If $B_\emptyset$ is a sphere, but the points $b_{\emptyset,i}$ are all on the equator, then everything will work as if the initial space was a circle isometric to $\mathbb S^1$.
\item The $\gp$-equivalence class of $B$ is equivalently described by a completion of the subspace countaining the geodesics between any two points $b_i,b_j$.
\end{itemize}
\end{lrem}

With these new notations, both expressions for distances between points in the space $\mathcal X$ can be simplified. Equations \eqref{eq:descrdist1} and \eqref{eq:descrdist2} give respectively
\begin{equation*}
d(x,y)= L(i,j)+ r_i^\alpha d_i(\rho_i,x) + r_j^\alpha d_j(\rho_j,y)
+ \sum\limits_{k \in \pi_{i,j} \: k \neq i,j} r_k^\alpha d_k(\rho_k,\eta_k),
\end{equation*}
and
\begin{equation*}
d(x,y)=L(i,j)+ r_i^\alpha d_i(\eta_i,x) + r_j^\alpha d_j(\rho_j,y)\\+ \sum\limits_{k \in \pi_{i,j} \: k \neq i,j} r_k^\alpha d_k(\rho_k,\eta_k).
\end{equation*}

The application $\psi:(\kgp)^\Gamma \times \Sigma_\Gamma^2  \times D_\Gamma \rightarrow \mathcal M_1 (\kgp)$ taking as arguments the $\gp$-equivalence classes $\left(\mathfrak X_i\right)_{i \in \Gamma}$, the rescaling parameters $\mathbf r, \mathbf s$ and also the distance matrix $L$ described above is well-defined and measurable when endowing the space of probability measures on $\kgp$ , $\mathcal M_1(\kgp)$, with the Prokhorov distance (see Lemma \ref{lem:mes} in Appendix). For probability measures $\tau$ on $\Sigma_\Gamma^2\times\mathcal D_\Gamma$, and $\aleph$ on $\kgp$, we define the annealed measure:
\begin{equation*}
\Psi \left(\aleph, \tau \right) \left(A \right) := \mathbb E \left[ \psi \left(\left( \mathcal X_i \right)_i,  \mathcal R, \mathcal S , \mathcal L\right) \left(A\right) \right],
\end{equation*}
for all measurable set $A \subset \kgp$, where $(\mathcal R, \mathcal S,\mathcal L)$ has distribution $\tau$, and the $\mathcal X_i$ are iid random variables with distribution $\aleph$ also independent from $(\mathcal R, \mathcal S, \mathcal L)$.

Given the parameters $\Gamma, \alpha, \tau$, we want to study distributions $\aleph$ which are fixed points of $\Psi$, or said in other words such that:
\begin{equation}
\label{eqnppl}
    \aleph = \Psi \left( \aleph, \tau  \right),
\end{equation}
where we omit to specify the dependance in $\alpha$ and $\Gamma$ since the latter one is fixed, and $\alpha$ needs to verify specific conditions described later in Section \ref{sec:hyp}.

Let us observe that we do not assume that the metric spaces $(B_k,\ell_k)$ are trees or even tree-like: if it exists, a solution of Equation \eqref{eqnppl} may have cycles as for example in Figure \ref{fig:intro}, or the $\beta$-stable looptrees which appear later in Section \ref{subsubs:lt}. It is a significant difference with what has been done before because the metric spaces we study are not real trees in general, hence we cannot use the correspondence between real trees and continuous excursions which is the general framework in the litterature. See for instance \cite{broutin2016self,le2005random}.

\subsection{Hypotheses}
\label{sec:hyp}

We make three main hypotheses to corner our model. Hypothesis \eqref{hypLnonnul} ensures that some length is added with positive probability by the recursion, and Hypothesis \eqref{condsufeqnY} with $p=1$ that a relation concerning distances is actually a contraction. The last main assumption, corresponding to both Hypothesis \eqref{hypdimms} and Hypothesis \eqref{hypdimhblock}, makes sure that the random blocks on which are glued the rescaled iid copies of the input space have almost surely constant fractal dimensions.

Other assumptions, less core and more technical are also regrouped in this section.

\paragraph{Added length}We make the following assumption on the distribution of the lengths $\mathcal L$:
\begin{EqHypothese} \label{hypLnonnul}
\bbP \left( \mathcal L(\emptyset,i)>0 \mbox{ for some } i\in \Gamma \right)>0.
\end{EqHypothese}
This assumption means that $\mathcal L$ actually adds a distance at some point. It is equivalent to the fact that at least one distance $\mathcal L (i,j)$ is non zero with positive probability.

To deal with random $\gp$-equivalence classes of rooted measured metric spaces $\mathcal X = (\mathfrak X , d , \mu , \rho )$,
the height of random points $\zeta$ sampled according to the measure $\mu$, that we denote by $Y := d(\rho,\zeta)$ will play a key role. Indeed, it appears in the distance matrix defined in Section \ref{matrixdist}. If $\mathcal X$ is a solution of the fixed-point equation \eqref{eqnppl} in distribution, then the random variable $Y$ must satisfy:

\begin{equation}\label{eqndptunif}
\begin{split}
    Y &\loi \sum_{i \in \Gamma} \mathbf 1_{J =i} \left( \sum_{k \in \pi(\emptyset,i) \cup \{\emptyset\}} \mathcal R_k^\alpha Y^{(k)} + \mathcal L (\emptyset,i) \right) \\ 
    &\loi \sum\limits_{i \in \Gamma} \mathbf 1_{J \in \Gamma_i} \mathcal R_i^\alpha Y^{(i)} + \mathcal L\left( \emptyset,J\right),
\end{split}
\end{equation}
where $J$ is a random variable such that $\mathbb P (J =i \vert \mathcal R, \mathcal S,\mathcal L) = \mathcal S_i$ for all $i\in \Gamma$, the random variables $Y^{(i)}$ are iid copies of $Y$, and $(\mathcal R,\mathcal S,\mathcal L), Y^{(1)},  Y^{(2)}, \ldots$ are independent. We remind from Section \ref{notasubtree} that $\Gamma_i$ is defined as the subtree of $\Gamma$ rooted at $i$, and $\pi(i,j)$ is the set of vertices in the only path between $i$ and $j$ in $\Gamma$, to which we remove $q(i,j)$ the most recent common ancestor of $i,j$. Equation \eqref{eqndptunif} is studied in \cite{jelenkovic2012implicit,alsmeyer2012fixed} under the name of \emph{inhomogeneous smoothing transform}, as opposed to the notion introduced by Durrett and Liggett \cite{DurrettLiggett1983} in the \emph{homogeneous} case corresponding to the equation without righ-hand side (it would be $\mathcal L(\emptyset,J)=0$ almost surely and $K$ finite). We will follow their ideas in Section \ref{sectionminimalsolu} and our setting will require additional hypothesis concerning the distribution $\tau$.

Alsmeyer and Meiners give a necessary condition for an equation such as Equation \eqref{eqndptunif} to have non-trivial fixed-points in \cite[Theorem 6.1]{alsmeyer2012fixed}, which means in this context that the fixed-points are not almost surely infinite:
\begin{equation}
\label{condnecsumR}
\bbE \left[ \sum\limits_{i \in \Gamma} \mathbf 1_{J \in \Gamma_i} \mathcal R_i^\alpha \right] \leq 1.
\end{equation}

\paragraph{Contraction} We make the assumption that a stronger condition is satisfied. We state it for a parameter $p \geq 1$:
\begin{EqHypothese}
\bbE \left[ \left(\sum\limits_{i \in \Gamma} \mathbf 1_{J \in \Gamma_i} \mathcal R_i^\alpha \right)^p \right] < 1 \mbox{ and } \bbE \left[\mathcal L \left(\emptyset,J\right)^p\right]<+\infty. \tag{H$2_p$} \label{condsufeqnY}
\end{EqHypothese}
For $p=1$ this is sufficient in order to have non-trivial fixed-points to Equation \eqref{eqndptunif} from \cite[Lemma 4.4]{jelenkovic2012implicit}.

In order to obtain bounds on moments of the height of a fixed-point, we will sometimes assume that for for a $p \geq \frac{1}{\alpha}$
\begin{EqHypothese}
\bbE\left[\sup \left\{ \left(\sum\limits_{j \neq i : i\in \Gamma_j} \mathcal R_j^\alpha \right)^p: i \in \Gamma\right\}\right]<+\infty. \tag{H$3_p$} \label{hyphaut}
\end{EqHypothese}

\paragraph{Geometry} Let $\mathcal X$ be a random variable whose distribution is a solution of Equation \eqref{eqnppl} with fixed parameters. We write $\psi(\mathcal X)$ for a random variable defined as the image by $\psi$ of iid copies of $\mathcal X$ and $\left(\mathcal R, \mathcal S , \mathcal L\right)$ independent with distribution $\tau$. $\psi(\mathcal X)$ is then distributed as a copy of $\mathcal X$, and we are able to decompose it into the union of the random metric spaces $\mathcal X_i$, rescaled, together with blocks  $(B_k,\ell_k)$ intertwined in between (recall the construction in Section \ref{sectiondescriptionmodele}).

To be more precise, the rescaled random copies of $\mathcal X$ are glued onto and in between points of $\mathcal B$ which stands for the random version of the block $(B,\ell)$ introduced in Section \ref{sectiondescriptionmodele}. From this remark, we can see that the geometry of $\mathcal X$ depends on the geometry of $\mathcal B$.

\begin{lrem}
We will suppose that $\bbE \left[ \sup_i \mathcal L(\emptyset,i)^p \right]<\infty$ for a $p\geq \frac{1}{\alpha}$. This implies that an element of the $\gp$-isometry class of $\mathcal B$ is compact a.s.
\end{lrem}

The first assumption we make on the geometry of $\mathcal B$ is the following:
\begin{EqHypothese}\label{hypdimms}
\dimms(\mathcal B)= \dup \geq 0 \text{ a.s.}
\end{EqHypothese}
Note that this constant can be equal to $0$, for instance when the tree $\Gamma$ is finite.

Lastly we make a similar assumption concerning the Hausdorff dimension:
\begin{EqHypothese}\label{hypdimhblock}
\dimh (\mathcal B) = \dlow \geq 0 \text{ a.s.}
\end{EqHypothese}
We recall that $\dlow \leq \dup$.

\subsection{Related models}
\label{sec:related}
Several models in the litterature can be described with the general construction of Section \ref{sectiondescriptionmodele}. The specificities of our most basic setup are precisely cornered by the hypotheses stated in Section \ref{sec:hyp}, especially Hypotheses \eqref{hypLnonnul} and \eqref{condsufeqnY} with $p=1$ as we will make it clear in the results section.

The first description of a random metric space as the unique solution of a distributional fixed-point equation can be traced back to the article \cite{albenque2015brownian} by Albenque and Goldschmidt. In this article, the authors characterize the Brownian CRT by a decomposition in three pieces following the result of Aldous in \cite{aldous1994recursive}. They show that this fixed point is attractive by iteratively applying the decomposition. Results are stated for \emph{Gromov-Hausdorff} equivalence classes of compact metric spaces, and the convergence holds for the $\gp$ topology.

The article \cite{broutin2016self} by Broutin and Sulzbach somewhat enlarges their scope.
They are able to prove existence, uniqueness and attractiveness of a wider collection of random compact rooted measured metric spaces, in the sense of \emph{Gromov-Hausdorff-Prokhorov}, and are also able to characterize Minkowski and Hausdorff dimensions of the fixed-points under mild conditions.
With the notations from Section \ref{sectiondescriptionmodele}, their setup differs from ours since it requires a finite structural tree $\Gamma$ and blocks $(B_k,\ell_k)$ all reduced to singletons:
they identify points onto several of the iid copies instead of gluing them onto random blocks.
It entails that Hypothesis \eqref{hypLnonnul} is not satisfied in their study, and the computation of the height of a random point leads to a \emph{smoothing transform}:
\begin{equation}
\bbE \left[ \sum\limits_{i \in \Gamma} \mathbf 1_{J \in \Gamma_i} \mathcal R_i^\alpha \right] = 1,
\end{equation}
with the same notations as for Hypothesis \eqref{condsufeqnY}. This fixes the value of $\alpha$ and is known to admit a unique solution if the mean is given (see \cite{durrett1983fixed}).
Their setup covers applications both to the formerly studied Brownian CRT, and to the dual trees of random recursive triangulations of the disk introduced by Curien and Le Gall in \cite{curien2011random}.

Another work to mention is the article from Chee, Rembart and Winkel \cite{chee2018recursive}, concerning the $\beta$-stable trees. It is inspired by Marchal's random growth algorithm introduced in \cite{Marchal2008note}: the stable trees are the unique fixed point of a recursive equation that consists in gluing together infinitely many rescaled copies of a random compact metric space at a single branchpoint.
The authors obtain existence, uniqueness of the fixed point together with attractiveness for a larger class of recursive distributional equations. 
This model falls again out of our scope since Hypothesis \eqref{hypLnonnul} is not satisfied. Note also that they work with Gromov-Hausdorff equivalence classes on real trees.

It seems that there is no easy way to unify our setup with the three models cited above. Our study is complementary to theirs.

In the anterior paper \cite{MR3846837}, two of the same authors give a construction of continuum random trees via a recursive method which also states existence and uniqueness of solutions ot an equivalent of Equation \eqref{eqnppl}.
In their setup, a countably infinite number of iid copies are glued onto a random \emph{string of beads}, defined as a random interval endowed with a random discrete probability measure.
If we omit differences concerning the ambient metric spaces -- they work with \emph{Gromov-Hausdorff-Prohorov} equivalence classes restricted to real trees where the measure has full support and such that the height has $p$-moments -- our results generalize their construction (except for the \emph{generalized strings} which give a mass on the branch).
Indeed it corresponds to the special case of the star tree $\Gamma$ where infinitely many vertices are children of the root, and $(B_\emptyset, \ell_\emptyset)$, the only block that appears in the recursion, is a random \emph{string of beads}. They are able to compute the almost sure Hausdorff dimension of the fixed point, and apply their results to the $\beta$-stable trees.

Another work to mention is the article by Sénizergues \cite{senizergues2019random}. Metric spaces are constructed as an iterative gluing, generalizing stick-breaking construction with random "blocks" more general than the usual segments added at each step. It entails that metric spaces which are almost surely not trees arise, at each step since the blocks are not forced to be trees, and also at the limit. The main difference with our work is the iterative nature of the construction, where our construction is recursive.

\section{Main results}
\label{sectionresults}
\subsection{Existence and uniqueness of solutions to Equation \eqref{eqnppl}}

Recall the settings of Section \ref{sectiondescriptionmodele} in which we need to introduce a structural tree $\Gamma$, a distribution $\tau$ on $\Sigma_K^2 \times D_\Gamma$ and a parameter $\alpha \in (0,1)$.
We define $\munp{p}$ as the set of probability measures $\delta$ on $\kgp$ such that $\int_{\kgp} \int_{\mathcal X} d(\rho,\zeta)^p \mathrm d \mu(\zeta) \mathrm d \delta (\mathcal X,d,\mu,\rho) < \infty$. This integral can also be interpreted as $\bbE \left[d(\rho,\zeta)^p\right]$ for a random variable $\mathcal X=(X,d,\rho,\mu)$ with distribution $\delta$ and $\zeta$ sampled according to $\mu$, where the mean is taken over both $\delta$ and $\mu$.

\begin{lthe} \label{thmppl}
Assume that the parameters of Equation \eqref{eqnppl}, namely $\Gamma, \tau$ and $\alpha$ satisfy both Hypotheses \eqref{hypLnonnul} and \eqref{condsufeqnY} with $p=1$, then Equation \eqref{eqnppl} admits a unique solution $\eta$ on $\munp{1}$.
\end{lthe}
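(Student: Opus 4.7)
The plan is a Banach-style fixed-point argument on a suitably chosen complete metric on $\munp{1}$. First I would equip $\munp{1}$ with a Wasserstein-type metric built from $\dgp$,
\[
\bar d_1(\aleph, \aleph') := \inf_{\Pi} \bbE_\Pi\left[\dgp(\mathcal X, \mathcal X')\right],
\]
the infimum running over couplings $\Pi$ of $\aleph$ and $\aleph'$. Since $(\kgp,\dgp)$ is Polish and the height controls the relevant part of the diameter, a standard argument yields that $(\munp{1}, \bar d_1)$ is complete. I would then check that $\munp{1}$ is stable under $\Psi(\cdot,\tau)$: Equation \eqref{eqndptunif} and Hypothesis \eqref{condsufeqnY} with $p=1$ give
\[
\bbE[Y_{\Psi(\aleph,\tau)}] \leq c\, \bbE[Y_{\aleph}] + \bbE[\mathcal L(\emptyset,J)], \quad \text{with} \quad c := \bbE\Bigl[\sum_{i\in\Gamma} \mathbf 1_{J\in \Gamma_i}\, \mathcal R_i^\alpha\Bigr] < 1,
\]
so the first moment of the height is preserved (and in fact uniformly bounded) along iterations.

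The heart of the proof is the contraction estimate
\[
\bar d_1\bigl(\Psi(\aleph,\tau), \Psi(\aleph',\tau)\bigr) \leq c\, \bar d_1(\aleph,\aleph').
\]
Given an optimal (or near-optimal) coupling of $\aleph$ and $\aleph'$, I would construct a coupling of the two outputs as follows: draw a single copy of $(\mathcal R, \mathcal S, \mathcal L)$ and of the blocks $(B_k,\ell_k)_k$, draw one iid family $(\mathcal X_i,\mathcal X_i')_{i\in\Gamma}$ of input couplings, and perform the gluing of Section~\ref{sectiondescriptionmodele} on both sides using the same marked-point labels and the image of a common $\eta_i$ through the optimal GP-embedding of each pair $(\mathcal X_i,\mathcal X_i')$. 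Embedding both resulting spaces into a common metric space, where the blocks are identified and each pair of rescaled copies of the $i$-th input is identified via that optimal embedding, the two roots coincide and the GP distance of the outputs is bounded by the Prokhorov distance of the pushforward measures. The latter, in turn, is controlled by a convex combination (by the weights $\mathcal S_i$ that encode where a point is sampled) of the rescaled contributions $\mathcal R_i^\alpha\,\dgp(\mathcal X_i, \mathcal X_i')$. Taking expectations and using independence produces precisely the factor $c$.

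Once the contraction is in hand, uniqueness is immediate: any two solutions $\aleph_1,\aleph_2 \in \munp{1}$ satisfy $\bar d_1(\aleph_1,\aleph_2)\leq c\,\bar d_1(\aleph_1,\aleph_2)$, forcing $\bar d_1(\aleph_1,\aleph_2)=0$ and hence $\aleph_1 = \aleph_2$. For existence I would iterate starting from $\aleph_0 = \delta_{\{\rho\}}$ (a singleton class, which lies in $\munp{1}$): the sequence $\aleph_n := \Psi(\aleph_{n-1},\tau)$ is Cauchy by geometric iteration of the contraction, and its limit in the complete space $(\munp{1},\bar d_1)$ is a fixed point thanks to the $\bar d_1$-continuity of $\Psi(\cdot,\tau)$ established via the same coupling estimate.

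The main obstacle is proving the contraction bound itself. The gluing construction means that two nearby input sequences $(\mathcal X_i)$ and $(\mathcal X_i')$ yield outputs whose measure structure on every block piece and on every rescaled copy must be compared simultaneously; one has to verify that the optimal embeddings at each vertex can be combined into a single embedding of both outputs without accumulating error from the blocks (which are identical) or from the (possibly infinite) sum over $i\in\Gamma$. Handling the case $K=+\infty$ in particular requires Hypothesis \eqref{condsufeqnY} to guarantee that the contribution of deep vertices is summable, and the same hypothesis is used to ensure $\bar d_1$ is finite on pairs in $\munp{1}$.
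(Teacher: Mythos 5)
Your route is genuinely different from the paper's. You propose a Banach fixed-point argument: equip $\munp{1}$ with a Wasserstein-over-$\dgp$ metric $\bar d_1$, show $\Psi(\cdot,\tau)$ is a $c$-contraction for $c = \bbE[\sum_i \mathbf 1_{J\in\Gamma_i}\mathcal R_i^\alpha] < 1$, and deduce existence and uniqueness simultaneously. The paper instead proves uniqueness via the distance matrix distributions $\nu^{\mathcal X}_n$, showing by induction on $n$ that each $n\times n$ matrix marginal satisfies a perpetuity equation $M_n \loi A_n M_n + B_n$ (Vervaat's theorem, following Broutin--Sulzbach), and proves existence by iterating from singletons and invoking the Greven--Pfaffelhuber--Winter tightness criterion together with the $L^1$ convergence of the height $Y_m$ and two-point distance $D_m$. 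The distance-matrix route deliberately avoids a contraction estimate in a metric on $\mathcal M_1(\kgp)$, and that choice is not cosmetic.

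The gap in your argument is the step ``the Prokhorov distance of the pushforward measures is controlled by a convex combination (with weights $\mathcal S_i$) of the rescaled contributions $\mathcal R_i^\alpha\,\dgp(\mathcal X_i,\mathcal X_i')$.'' The Prokhorov metric does not decompose linearly over a partition of the mass. If $\mu = \sum_i s_i\mu_i$ and $\mu' = \sum_i s_i\mu'_i$ on a common space with $\dd_{\pp}(\mu_i,\mu'_i)\le\varepsilon_i$, the natural bound one gets is $\dd_{\pp}(\mu,\mu') \le \sup_i\varepsilon_i$, not $\sum_i s_i\varepsilon_i$: in the inequality $\mu(A) \le \mu'(A^\varepsilon)+\varepsilon$ one must use a single $\varepsilon$ working for all pieces simultaneously. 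A convex-combination estimate of that shape is a Wasserstein-type inequality, but $\dgp$ is built on Prokhorov, not on Wasserstein, and the two are not equivalent at this quantitative level. After taking expectations, $\bbE[\sup_i \mathcal R_i^\alpha \dgp(\mathcal X_i,\mathcal X'_i)]$ need not be bounded by $c\,\bar d_1(\aleph,\aleph')$, especially when $\Gamma$ is infinite. A second (smaller) issue you would still need to control is the coupling of the random gluing points $\eta_i$: Strassen's theorem gives a coupling with $\mathbb P(d(\eta_i,\eta'_i)>\varepsilon)\le\varepsilon$, but the resulting discrepancies enter the output distances summed along paths in $\Gamma$, and this contribution is not accounted for in the displayed estimate. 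The paper's move to the distance matrix distribution $\nu^{\mathcal X}$ is precisely what linearizes the problem: the matrix entries are real random variables satisfying $L^1$-contracting recursions, where the convex-combination computation you want actually holds, and uniqueness then follows from the perpetuity fixed-point rather than from a metric contraction.
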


The setup considered for the fixed-point equation \eqref{eqnppl} does not differentiate spaces with massless portions. Thus, if we recall the remark at the end of Section \ref{matrixdist} the natural way to describe it is by the use of $\gp$-isometry classes. Results in a stronger sense, for instance with the Gromov-Hausdorff-Prohorov metric for which we refer to \cite{abraham2013note,broutin2016self}, would require at least to restrict ourselves to equivalence classes of compact measured rooted metric spaces with respect to this distance, such that the measure has full support: it could then extend our results in light of the discussion in \cite{broutin2016self} Section 2.1 about the $\gp$ topology.
Note that as remarked in \cite{broutin2016self}, this would not forbid the fact that there are other fixed points with massless paths.

The existence and uniqueness of a solution to Equation \eqref{eqndptunif} introduced in Section \ref{sec:hyp} is the keystone of the proof of Theorem \ref{thmppl}.

In order to obtain such a result, statement are made in the space $\munp{1}$, which means that for a random variable $\mathcal X =(X,d,\mu,\rho)$ with distribution $\eta$, the distance between the root and a random point sampled according to $\mu$ is integrable (when we average on both $\mu$ and $\eta$). Hypotheses \eqref{hypLnonnul} and \eqref{condsufeqnY} with $p=1$ give a sufficient condition for the existence of a solution of Equation \eqref{eqndptunif}, in addition to the $L^1$ property of the solution which will be a technical issue in the proof.

More can be said on the distribution $\eta$ under further hypothesis:
\begin{lthe}\label{Corpfmoment}
With $\eta \in \munp{1}$ the fixed-point solution from Theorem \ref{thmppl}:
\begin{enumerate}
\item[(i)] Under hypothesis \eqref{condsufeqnY} for a $p\geq 1$, then $\eta \in \munp{p}$.
\item[(ii)]\label{cor:ii} If additionally Hypothesis \eqref{hyphaut} holds and $\bbE \left[ \sup \left\{ \mathcal L(\emptyset,i)^p : i \in \Gamma \right\} \right]< + \infty$ for a $p\geq \frac{1}{\alpha}$, then for a random variable $\mathcal X$ with distribution $\eta$, we have $\bbE \left[ h(\mathcal X)^p \right]<+\infty$.
\end{enumerate}
\end{lthe}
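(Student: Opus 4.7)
The plan is to treat both parts by iterating the construction from a simple space, using the contraction structure of the recursion to produce a uniform $L^p$ bound along the iterates, and then passing to the fixed point by Fatou's lemma.

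For part (i), I would apply \eqref{eqndptunif} recursively: set $Y_0 \equiv 0$ and define $Y_{n+1} := \sum_i \mathbf 1_{J \in \Gamma_i} \mathcal R_i^\alpha Y_n^{(i)} + \mathcal L(\emptyset, J)$ with iid copies $Y_n^{(i)}$ of $Y_n$ and a fresh, independent sample $(\mathcal R, \mathcal S, \mathcal L, J)$. Conditioning on $(\mathcal R, \mathcal S, \mathcal L, J)$ and applying Jensen's inequality to $x \mapsto x^p$ together with Minkowski's inequality yields
\[
\|Y_{n+1}\|_p \leq c^{1/p}\, \|Y_n\|_p + \|\mathcal L(\emptyset, J)\|_p,
\]
with $c := \bbE\bigl[(\sum_i \mathbf 1_{J \in \Gamma_i} \mathcal R_i^\alpha)^p\bigr] < 1$ by Hypothesis \eqref{condsufeqnY}. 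Iterating gives a uniform bound on $\|Y_n\|_p$, and since $Y_n \to Y$ in distribution by the iterative procedure underlying Theorem \ref{thmppl}, Fatou's lemma yields $\bbE[Y^p] < \infty$, i.e.\ $\eta \in \munp{p}$.

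For part (ii), the pointwise bound derived from \eqref{eq:descrdist1}--\eqref{eq:descrdist2} together with the decomposition $\supp(\mu) = \bigcup_i \varphi^\circ(\supp(\mu_i))$ is
\[
h(\mathcal X) \leq \sup_{i \in \Gamma} \Bigl[\mathcal L(\emptyset, i) + \sum_{k \in \pi(\emptyset,i)\cup\{\emptyset\},\, k \neq i} \mathcal R_k^\alpha Y_k + \mathcal R_i^\alpha h(\mathcal X_i)\Bigr],
\]
where $Y_k := d_k(\rho_k, \eta_k)$ is distributed as the random-point height $Y$ (in $L^p$ by part (i)) and the $h(\mathcal X_i)$ are iid copies of $h(\mathcal X)$. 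The structural point is that only the terminal index contributes the full height, while the interior contributions involve $Y_k$'s already under control. Iterating the gluing, $\mathcal X^{(n+1)} := \psi(\mathcal X^{(n)}_i, \mathcal R^{(n+1)}, \mathcal S^{(n+1)}, \mathcal L^{(n+1)})$ from $\mathcal X^{(0)}$ a singleton, the three Minkowski pieces of $\|h(\mathcal X^{(n+1)})\|_p$ are bounded as follows: $\|\sup_i \mathcal L(\emptyset,i)\|_p$ is finite by hypothesis; the path-sum term is controlled via Jensen/Hölder, Hypothesis \eqref{hyphaut} (which places $\sup_i \sum_{j \text{ strict anc } i} \mathcal R_j^\alpha$ in $L^p$) and $\bbE[Y^p] < \infty$ from (i); for the terminal term, the inequality $(\sup_i a_i)^p \leq \sum_i a_i^p$ together with $p \geq 1/\alpha$, $\mathcal R_i \in (0,1)$ and $\sum_i \mathcal R_i = 1$ forces $\sum_i \mathcal R_i^{\alpha p} \leq 1$ and hence
\[
\bbE\bigl[(\sup_i \mathcal R_i^\alpha h(\mathcal X_i^{(n)}))^p\bigr] \leq \bbE\bigl[\textstyle\sum_i \mathcal R_i^{\alpha p}\bigr] \cdot \bbE[h(\mathcal X^{(n)})^p] \leq \bbE[h(\mathcal X^{(n)})^p].
\]

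The main obstacle is securing \emph{strict} contraction in the terminal term, which is what makes the induction $\|h(\mathcal X^{(n+1)})\|_p^p \leq C + c' \|h(\mathcal X^{(n)})\|_p^p$ produce a uniform bound. For $p > 1/\alpha$, the strict inequality $\mathcal R_i^{\alpha p} < \mathcal R_i$ holds almost surely whenever $\mathcal R_i \in (0,1)$, so $\bbE[\sum_i \mathcal R_i^{\alpha p}] < 1$ strictly, the induction is stable, and Fatou concludes just as in (i). The boundary case $p = 1/\alpha$, where the naïve estimate only produces a coefficient of $1$, would require either a refinement of $(\sup_i a_i)^p \leq \sum_i a_i^p$ exploiting the iid structure of the heights and the simultaneous positivity of several $\mathcal R_i^{\alpha p}$, or a monotonicity/continuity argument in $p$ starting from some $p' > 1/\alpha$ for which the hypotheses still hold.
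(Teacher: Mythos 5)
Your proposal follows essentially the same route as the paper. Part (i) is an explicit unwinding of the lemma the paper cites (Lemma 4.4 of Jelenkovic--Olvera-Cravioto): conditional Minkowski gives $\|Y_{n+1}\|_p \leq c^{1/p}\|Y_n\|_p + \|\mathcal L(\emptyset,J)\|_p$ and the contraction bootstraps to the fixed point. Part (ii) is the same three-term Minkowski decomposition of $h(\mathcal X_{n+1})$ the paper uses, with the supremum over the terminal term bounded by a sum and contracted via $\sum_i \mathcal R_i^{\alpha p}<1$, the $\mathcal L$-term controlled by hypothesis, and the path term controlled by H$3_p$ and part (i). Your concern about the boundary $p=1/\alpha$ is legitimate and, in fact, shared by the paper itself: the paper's proof opens by fixing an integer $m$ with $m\alpha>1$ (strictly), which is exactly the strict-contraction requirement you identify; as written, the argument establishes only the $m$-th moment and presupposes that the moment hypotheses on $\mathcal L$ and H$3$ are available at exponent $m$ rather than merely at the stated $p$. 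In all the paper's applications the structural tree has finite height and the block lengths have all moments, so H$3_p$ and the $\mathcal L$-condition hold for every $p$ and the issue is immaterial there; but your observation that the stated theorem's case $p = 1/\alpha$ is not strictly covered by the contraction estimate is accurate, and it would have been reasonable for the paper to either restrict to $p>1/\alpha$ or spell out the mild additional assumption.
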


The first result stated by this theorem is a straightforward application of Lemma 4.4 in \cite{jelenkovic2012implicit}. For the second part, Hypothesis \eqref{hyphaut} is minimal \footnote{The particular case where the tree $\Gamma$ is $\mathbb N$ in which $i$ is the only child of $i-1$ for any $i$ gives an example.} but in practice we use a simpler condition which will be verified in the main examples that we present:
\begin{lpro}\label{propfiniteH}
Hypothesis \eqref{hyphaut} holds for any $p$ if the structural tree $\Gamma$ has finite height. 
\end{lpro}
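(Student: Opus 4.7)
The plan is to observe that the supremum appearing in Hypothesis \eqref{hyphaut} admits a deterministic bound when $\Gamma$ has finite height, so no genuine probabilistic argument is needed.

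First I would unpack the indexing set: the condition $i \in \Gamma_j$ with $j \neq i$ says exactly that $j$ is a strict ancestor of $i$ in the plane tree $\Gamma$ (using the notations from Section \ref{section2preliminaries}, where $\Gamma_j$ is the subtree rooted at $j$). Therefore for any fixed $i \in \Gamma$ the sum $\sum_{j \neq i : i \in \Gamma_j} \mathcal R_j^\alpha$ ranges over the $|i|$ strict ancestors of $i$, and in particular is a sum of at most $|i|$ terms.

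Next I would invoke the finite-height assumption: denoting $H := \sup_{i \in \Gamma} |i| < \infty$, every vertex has at most $H$ strict ancestors. Since $\mathbf{r} \in \Sigma_\Gamma$ one has $\mathcal R_j \in (0,1)$ almost surely, and because $\alpha \in (0,1) \subset (0,\infty)$ this forces $\mathcal R_j^\alpha \leq 1$. Combining these two observations yields the pointwise (in $\omega$) estimate
\[
\sup_{i \in \Gamma} \left( \sum_{j \neq i : i \in \Gamma_j} \mathcal R_j^\alpha \right)^{p} \;\leq\; H^p,
\]
which is a deterministic constant.

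Taking expectations then gives $\bbE[\sup_i (\sum_{j} \mathcal R_j^\alpha)^p] \leq H^p < +\infty$, establishing \eqref{hyphaut} for every $p \geq 1/\alpha$ (in fact for every $p \geq 0$). There is no real obstacle: the only subtlety worth flagging is checking that the bound $\mathcal R_j^\alpha \leq 1$ holds almost surely, which follows immediately from $\mathbf r$ being supported on $\Sigma_\Gamma$, so no additional integrability assumption on $\tau$ is needed for this proposition.
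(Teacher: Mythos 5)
Your proof is correct and matches the paper's (one-sentence) argument: each $\mathcal R_j^\alpha \leq 1$ almost surely, the inner sum has at most $H$ terms where $H$ is the height of $\Gamma$, so the supremum is deterministically bounded by $H^p$. Nothing is missing.
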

The proof is straightforward, since each term over which we take the supremum is almost surely bounded by a power of the height of $\Gamma$.

The next result deals with convergence of iterated distributions towards the fixed-point of Theorem \ref{thmppl}. Let $\phi : \mathcal M_1 (\kgp) \rightarrow \mathcal M_1 (\kgp)$ be defined as $\phi (\aleph) := \Psi (\aleph, \tau)$ for any $\aleph \in \mathcal M_1 (\kgp)$ where $\Psi$ is the application of Equation \eqref{eqnppl} and $\tau$ is the given probability distribution on $(\Sigma_\Gamma)^2 \times D_\Gamma$. Let also $\phi^n:= \phi \circ \phi \circ \ldots \circ \phi$ the $n$-th iterated of $\phi$ for $n \geq 1$.

\begin{lthe} \label{thmcviteres}
Let $\phi$ be as stated above, we suppose that the conditions \eqref{hypLnonnul} and \eqref{condsufeqnY} with $p=1$ are fulfilled. Let $\delta \in \munp{1}$. Then the sequence $(\phi^n(\delta))_n$ converges weakly to $\eta$.
\end{lthe}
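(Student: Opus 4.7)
The plan is to build a single coupling of all iterates $\bigl(\phi^n(\delta)\bigr)_{n \geq 0}$ together with $\eta$, turn the convergence $\phi^n(\delta) \to \eta$ into a geometric $L^1$-bound on entries of the distance matrix using the contraction from Hypothesis \eqref{condsufeqnY} with $p=1$, and then conclude via Proposition \ref{critereCVmesures}(ii).

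On a common probability space, attach to each word $u$ of the infinite tree of finite words on $\Gamma$ an independent copy $(\mathcal R_u, \mathcal S_u, \mathcal L_u)$ of $\tau$ and, at each such $u$, two independent seeds $\mathfrak X^\delta_u \sim \delta$ and $\mathfrak X^\eta_u \sim \eta$. For each $n$, define $\mathcal X^{(n)}$ by applying $\psi$ recursively $n$ times with the shared environment $\bigl(\mathcal R_u, \mathcal S_u, \mathcal L_u\bigr)_{|u|<n}$, starting from the $\mathfrak X^\delta_u$ at the depth-$n$ leaves; and define $\mathcal X^*$ similarly but starting from the $\mathfrak X^\eta_u$ at depth $n$. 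Because $\eta$ is a fixed point of $\phi$, the law of $\mathcal X^*$ is $\eta$ regardless of the chosen cutoff depth, whereas $\mathcal X^{(n)}$ has law $\phi^n(\delta)$.

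To compare distance matrices, sample points in both $\mathcal X^{(n)}$ and $\mathcal X^*$ using a common family of biased walks $\bigl(J^{(\ell)}_k\bigr)_{k \geq 1}$ in the environment tree where, conditionally on the environment, $J^{(\ell)}_k$ is drawn according to $\mathcal S_{(J^{(\ell)}_1, \ldots, J^{(\ell)}_{k-1})}$. This produces sampled points $\zeta^{(n)}_\ell$ and $\zeta^*_\ell$. Formulas \eqref{eq:descrdist1} and \eqref{eq:descrdist2} decompose $d^{(n)}(\zeta^{(n)}_i, \zeta^{(n)}_j)$ and $d^*(\zeta^*_i, \zeta^*_j)$ into identical backbone contributions coming from the shared environment along the two walks, plus depth-$n$ leaf contributions rescaled by the products $\prod_{k=1}^n \mathcal R^\alpha$ along each spinal path. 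Following the same computation used to solve Equation \eqref{eqndptunif}, with $m := \bbE\bigl[\sum_i \mathbf 1_{J \in \Gamma_i} \mathcal R_i^\alpha\bigr] < 1$, one obtains
\[ \bbE\bigl[\bigl\vert d^{(n)}(\zeta^{(n)}_i, \zeta^{(n)}_j) - d^*(\zeta^*_i, \zeta^*_j) \bigr\vert\bigr] \leq C\, m^n, \]
where $C$ depends only on $\bbE_\delta[d(\rho,\zeta)]$ and $\bbE_\eta[d(\rho,\zeta)]$, both finite since $\delta, \eta \in \munp{1}$. This yields $L^1$- and hence weak convergence of every finite-dimensional marginal of the distance matrix, so that $\nu^{\phi^n(\delta)} \Rightarrow \nu^\eta$.

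It then remains to establish tightness of $\bigl(\phi^n(\delta)\bigr)_n$ on $\kgp$, which I expect to be the main obstacle: convergence of finite distance-matrix marginals does not, on its own, preclude pathological behaviour in the $\gp$ topology. The iterative relation $\bbE[d^{(n)}(\zeta_0, \zeta_1)] \leq m\, \bbE[d^{(n-1)}(\zeta_0, \zeta_1)] + \bbE[\mathcal L(\emptyset, J)]$ delivers, by induction and the contraction $m<1$, a uniform bound on the first moment of distances, and the explicit recursive representation of $\mathcal X^{(n)}$ lets one exhibit, for every $\varepsilon > 0$, a compact subset of $\kgp$ carrying $\phi^n(\delta)$-mass $\geq 1-\varepsilon$ uniformly in $n$ by truncating heights via Markov's inequality and exploiting the recursive structure. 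Combined with the distance-matrix convergence above, Proposition \ref{critereCVmesures}(ii) then gives $\phi^n(\delta) \Rightarrow \eta$ weakly.
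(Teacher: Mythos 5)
Your strategy for the convergence of distance matrix distributions is essentially the one the paper follows, although the paper organises it slightly differently: it first proves Lemma \ref{lemiteres} for the $(0,1)$-entry by writing $D_{\delta,1}^{(m)}= Y_{m'} + \sum_{\theta \in \Theta_m} C_\theta D_{\delta,1}^{(0,\theta)}$ with $Y_{m'}$ coming entirely from the shared environment, then kills the leaf contribution in $L^1$ using $\bbE \big[ \sum_{\theta \in \Theta_m} C_\theta \big] \leq \varrho^m$ with $\varrho = \bbE\big[\sum_i \mathbf 1_{J\in\Gamma_i}\mathcal R_i^\alpha\big]<1$, and finally handles the general $n\times n$ entry by an induction referring to \cite[Theorem 2(i)]{broutin2016self}. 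Your coupling achieves exactly this decomposition, with the same geometric bound $C\varrho^n$, so for that part the two proofs coincide up to presentation. One small imprecision worth noting: the ``backbone contributions'' are not literally given by one application of \eqref{eq:descrdist1}--\eqref{eq:descrdist2}; what makes them depend only on the environment is that expanding the recursion all the way to level $n$ absorbs every intermediate exit-point height $d_k(\rho_k,\eta_k)$ into the purely environmental sum $Y_{m'}= \sum_{k} \sum_{\theta\in\Theta_k} C_\theta Q^{(\theta)}$, while the seed-dependent parts collapse onto the cut-line. The argument is right, but the decomposition is a full depth-$n$ unfolding rather than a one-step formula.

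The tightness discussion is where the proposal falls short. You are correct that Proposition \ref{critereCVmesures}(ii) requires tightness in addition to convergence of $\nu^{\phi^n(\delta)}$, and interestingly the paper's own proof is silent on this and implicitly reuses the mechanism of Lemma \ref{lemtight}. But your sketch (``truncating heights via Markov's inequality and exploiting the recursive structure'') only addresses one of the two parts of the $\kgp$-tightness criterion of \cite[Theorem 3]{greven2009convergence}. That criterion asks for tightness of the distance distributions, which your uniform first-moment bound does give, \emph{and} a uniform control of the modulus of mass distribution $v_\delta(\cdot)$, i.e.\ that asymptotically little mass sits in $\delta$-thin regions. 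The latter has nothing to do with heights; it is controlled in Lemma \ref{lemtight} via the tagged-fragment random walk $(-\log\bar{\mathcal S}_k)_k$ and Bertoin's \cite[Lemma 1.4]{MR2253162}. To close the gap you would need to rerun that estimate with $\delta$-seeds in place of singletons: the modulus-of-mass bound uses only the $\mathcal S$-structure and carries over verbatim, while the Markov step that involved $Y_{m-M}^\varphi \leq Y^\varphi$ must be replaced by the uniform-in-$m$ bound $\sup_m \bbE[D_{\delta,1}^{(m)}]<\infty$ that your first-moment contraction already provides. As written, your tightness argument is too thin to stand, although it is fixable along these lines.
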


\subsection{Bounds on fractal dimensions of the fixed-points}
\label{sec:resultdimf}

\begin{lthe} \label{thmborndimms}
Let $\mathcal X$ be a random variable with distribution $\eta$ from Theorem \ref{thmppl}, such that the conclusion of Theorem \ref{cor:ii} $(ii)$ is satisfied. Under the additional hypotheses \eqref{hyphaut} with $p\geq \frac{1}{\alpha}$ and \eqref{hypdimms}, we have
\begin{equation*}
\dimms (\mathcal X) \leq \max \left\{ \frac{1}{\alpha} , \dup \right\}.
\end{equation*}
\end{lthe}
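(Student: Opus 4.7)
The target is to show that for any fixed $d' > \max(1/\alpha, \dup)$ we have $\mathbb E[N_\varepsilon(\mathcal X)] = O(\varepsilon^{-d'})$ as $\varepsilon \to 0$. Combining this moment bound with Markov's inequality along $\varepsilon_n = 2^{-n}$, Borel--Cantelli, and the monotonicity of $\varepsilon \mapsto N_\varepsilon$, one deduces the almost-sure bound $\dimms(\mathcal X) \leq d'$; letting $d'$ decrease to $\max(1/\alpha, \dup)$ concludes.

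To obtain the moment bound, I would iterate the self-similar decomposition $\mathcal X \loi \psi((\mathcal X_i)_i; \mathcal R, \mathcal S, \mathcal L)$ supplied by Theorem \ref{thmppl}. Attach to each $\mathbf u$ of the infinite $\Gamma$-indexed tree an independent copy $(\mathcal R^{(\mathbf u)}, \mathcal S^{(\mathbf u)}, \mathcal L^{(\mathbf u)})$ of $(\mathcal R, \mathcal S, \mathcal L)$, with associated block $\mathcal B_\mathbf u$ and leaf-piece $\mathcal X_\mathbf u$, and write $W_\mathbf u := \prod_{k=1}^{|\mathbf u|} \mathcal R^{(\mathbf u|_{k-1})}_{u_k}$ for the cumulative multiplicative scaling, so that the image of $\mathcal X_\mathbf u$ inside $\mathcal X$ carries metric scaled by $W_\mathbf u^\alpha$. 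For each $\varepsilon > 0$ define the stopping line $\pi_\varepsilon := \bigl\{\mathbf u : W_\mathbf u^\alpha \leq \varepsilon < W_{\mathbf u-}^\alpha\bigr\}$. A short triangle-inequality argument (a ball in a block may extend into attached pieces, which only gives slack) then yields the covering bound
$$N_\varepsilon(\mathcal X) \leq \sum_{\mathbf v \text{ strict ancestor of } \pi_\varepsilon} N_{\varepsilon/W_\mathbf v^\alpha}(\mathcal B_\mathbf v) + \sum_{\mathbf u \in \pi_\varepsilon} N_{\varepsilon/W_\mathbf u^\alpha}(\mathcal X_\mathbf u).$$

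Taking expectations, I would treat the two sums separately. Fix $d'' \in (\max(1/\alpha, \dup), d')$. For the block term, Hypothesis \eqref{hypdimms} combined with the moment hypothesis on $\sup_i \mathcal L(\emptyset, i)^p$ (which bounds $\diam(\mathcal B)$) yields a random constant $K := \sup_{0 < \delta \leq 1} \delta^{d''} N_\delta(\mathcal B)$ with $\mathbb E[K] < \infty$, and the many-to-one formula gives $\mathbb E[\sum_{|\mathbf v|=n} W_\mathbf v^{\alpha d''}] = q^n$ with $q := \mathbb E[\sum_i \mathcal R_i^{\alpha d''}] < 1$ (strict inequality because $d'' > 1/\alpha$, together with the deterministic facts $\mathcal R_i \in (0,1)$ and $\sum_i \mathcal R_i = 1$). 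Hence the block contribution is of order $\varepsilon^{-d''}$. For the stopped-piece term, $\delta_\mathbf u := \varepsilon/W_\mathbf u^\alpha \geq 1$ for every $\mathbf u \in \pi_\varepsilon$, so $N_{\delta_\mathbf u}(\mathcal X_\mathbf u) = 1$ whenever $h(\mathcal X_\mathbf u) \leq \delta_\mathbf u / 2$; on the complement I would use the crude bound $N_\delta \leq 1 + C (h/\delta)^{d'}$ and control the tail via Markov's inequality using $\mathbb E[h(\mathcal X)^{d'}] < \infty$, which is provided by Theorem \ref{Corpfmoment}(ii) since $d' > 1/\alpha$. Applying the many-to-one formula along the stopping line then yields a bound of the same order $\varepsilon^{-d'}$.

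The main obstacle is to make rigorous the many-to-one computation along the \emph{random} stopping line $\pi_\varepsilon$, since $\pi_\varepsilon$ and the leaf pieces $\mathcal X_\mathbf u$ live on the same probability space and are not a priori independent. This can be handled by truncating to $\pi_\varepsilon \cap \{|\mathbf u| \leq n\}$, applying the classical many-to-one at each finite level, and passing to the limit via monotone convergence; or, more elegantly, by invoking the optional-stopping identity for the additive martingale $\sum_{|\mathbf u| = n} W_\mathbf u^{\alpha d''}$ of the underlying branching random walk. A secondary technical point is establishing $\mathbb E[K] < \infty$: this requires a uniform-in-probability tail bound on $\delta^{d''} N_\delta(\mathcal B)$ as $\delta \to 0$, which follows from Hypothesis \eqref{hypdimms} together with the compactness of $\mathcal B$ and the diameter control provided by the moment hypothesis on $\sup_i \mathcal L(\emptyset, i)$.
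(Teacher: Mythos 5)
You follow the same global route as the paper: decompose $\mathcal X$ along the stopping line $\pi_\varepsilon = \{\mathbf u : W_\mathbf u^\alpha \leq \varepsilon < W_{\mathbf u^-}^\alpha\}$ (the paper's $\cepsalpha$), bound $N_\varepsilon(\mathcal X)$ by a block contribution above the line plus a piece contribution on the line, take expectations, and finish with Markov and Borel--Cantelli along a geometric subsequence. Your treatment of the block contribution is in fact \emph{cleaner} than the paper's: by fixing $d'' > \max\{1/\alpha,\dup\}$ you get $q := \mathbb E\bigl[\sum_i\mathcal R_i^{\alpha d''}\bigr] < 1$ (since $\alpha d''>1$, $\mathcal R_i\in(0,1)$, $\sum_i\mathcal R_i=1$), so $\mathbb E\bigl[\sum_{|\mathbf v|=n}W_\mathbf v^{\alpha d''}\bigr] = q^n$ decays geometrically; you thereby bypass the paper's dyadic-annulus decomposition of $\cepsalphaf$, the case split on the sign of $\alpha\dup - 1$, and the appeal to Bertoin's convergence theorem for the fragmentation measures $\nu_\eta$. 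Giving up the sharp exponent $\dup$ for an arbitrary $d'>\max\{1/\alpha,\dup\}$ costs nothing for the final statement.

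The serious gap is your covering inequality
\[
N_\varepsilon(\mathcal X) \leq \sum_{\mathbf v\,:\,W_\mathbf v^\alpha>\varepsilon} N_{\varepsilon/W_\mathbf v^\alpha}(\mathcal B_\mathbf v) + \sum_{\mathbf u\in\pi_\varepsilon} N_{\varepsilon/W_\mathbf u^\alpha}(\mathcal X_\mathbf u).
\]
When the structural tree $\Gamma$ is infinite --- which it is in both applications, the stable trees and the stable looptrees --- the stopping line $\pi_\varepsilon$ is a.s.\ infinite, and every nonempty set has covering number at least $1$, so the second sum is $+\infty$ and the bound is vacuous. Your later observation that $N_{\delta_\mathbf u}(\mathcal X_\mathbf u)=1$ when $h(\mathcal X_\mathbf u)\leq\delta_\mathbf u/2$ does not rescue this: infinitely many $1$'s still diverge. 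The fix, and what the paper actually does, is to not give the small stopped pieces their own balls. Cover the union of blocks above the line by $\varepsilon/2$-balls, then enlarge each to radius $\varepsilon$; a stopped piece $\mathcal X_\mathbf u$ with rescaled height $W_\mathbf u^\alpha h(\mathcal X_\mathbf u)\leq\varepsilon/2$ is then entirely contained in one of these enlarged balls (it is attached to the block at a single point). Only the \emph{big} pieces $\{\mathbf u\in\pi_\varepsilon : W_\mathbf u^\alpha h(\mathcal X_\mathbf u)>\varepsilon/2\}$ need separate covering, and by the Markov estimate you describe (using $\mathbb E[h(\mathcal X)^p]<\infty$ with $p\alpha>1$, together with $\sum_{\mathbf u\in\pi_\varepsilon}\mathbb E[W_\mathbf u^{\alpha p}\mid\pi_\varepsilon]\leq 1$) their expected number is $O(\varepsilon^{-1/\alpha})$. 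Restrict your second sum accordingly and the argument closes.

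A secondary point: you assert $\mathbb E[K]<\infty$ for $K:=\sup_{0<\delta\leq 1}\delta^{d''}N_\delta(\mathcal B)$ as a consequence of Hypothesis \eqref{hypdimms}, compactness of $\mathcal B$, and diameter control. That does not follow. Hypothesis \eqref{hypdimms} is a pointwise a.s.\ statement about a limsup; it gives $\delta^{d''}N_\delta(\mathcal B)\to 0$ a.s., hence a.s.\ finiteness of $K$, but provides no control on the tail of $K$. To be fair, the paper's own step makes an analogous implicit regularity assumption (it covers a rescaled block by $\left(2\mathcal R_\theta^\alpha/\varepsilon\right)^{\dup}h(\mathcal X^\theta)$ balls, which is not a formal consequence of \eqref{hypdimms} either), so you are in similar company; but the required uniform covering bound on $\mathcal B$ should be stated as an explicit additional hypothesis, verified in the applications where $\mathcal B$ is a circle or an interval.
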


Now let $\mathcal R_*$ be a random variable describing a size-biased pick among the $\mathcal R_i$-s: let $I$ be a random variable on $\Gamma$ such that $\mathbb P \left(I=k \vert \mathcal R, \mathcal S ,\mathcal L \right)=\mathcal R_k$ for all $k \in \Gamma$ and set $\mathcal R_*:=\mathcal R_{I}=\sum_{i \in \Gamma} \mathbf 1_{I=i} \mathcal R_i$.

\begin{lthe} \label{thmborndimh}
Let $\mathcal X$ be a random variable with distribution $\eta$ from Theorem \ref{thmppl}.  Under the additional hypotheses that $\mathbb E \left[\mathcal R_*^{-\delta}\right]<+\infty$ for a $\delta>0$, and under Hypothesis \eqref{hypdimhblock} we have the almost sure bound:
\begin{equation*}
\dimh (\mathcal X ) \geq \max \left\{ \dlow , \frac{1}{\alpha} \right\}.
\end{equation*}
\end{lthe}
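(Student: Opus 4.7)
The strategy is to establish $\dimh(\mathcal X) \geq \dlow$ and $\dimh(\mathcal X) \geq 1/\alpha$ separately and then take the maximum.

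\textbf{The bound $\dimh(\mathcal X) \geq \dlow$.} Each block $B_k$ embeds isometrically into $\mathcal X$: since $\Gamma$ is a tree, any path in $\mathcal X$ between two points of $B_k$ is at least as long as the corresponding $B_k$-internal path, because any detour into an attached input space $\mathcal X_j$ would have to return through the same single attaching marked point. By countable stability of the Hausdorff dimension, $\dimh(\bigcup_k B_k) = \sup_k \dimh(B_k)$ inside $\mathcal X$. Similarly, $\mathcal B$ is a quotient of $\bigsqcup_k B_k$ by marked-point identifications, hence contains an isometric copy of each $B_k$, and $\dimh(\mathcal B) = \sup_k \dimh(B_k)$. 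Combining with Hypothesis \eqref{hypdimhblock} yields $\dimh(\mathcal X) \geq \dimh(\mathcal B) = \dlow$ a.s.

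\textbf{The bound $\dimh(\mathcal X) \geq 1/\alpha$.} The plan is to construct a random Borel probability measure $\lambda$ on $\mathcal X$ supported on the leaves of the iterated recursive decomposition, then verify Frostman's criterion \eqref{frostman} for $\lambda$. Iterating Theorem \ref{thmppl} yields a nested family of rescaled copies $\mathcal X \supset \mathcal X_1 \supset \mathcal X_2 \supset \ldots$, driven by an iid environment $(\mathcal R^{(k)}, \mathcal S^{(k)}, \mathcal L^{(k)})_{k \geq 0}$. Independently, draw a path $(I_k)_{k \geq 1}$ by $\mathcal R$-size-biased picks: $\mathbb P(I_k = i \mid \mathcal R^{(k-1)}) = \mathcal R^{(k-1)}_i$. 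Since $\mathbb E[\mathcal R_*^{-\delta}] < \infty$ excludes the degenerate case $\mathcal R_* \equiv 1$, we have $\mathbb E[\log \mathcal R_*] < 0$; combining the LLN with the a.s.\ finiteness of $\diam \mathcal X$ gives $\diam \mathcal X_n \to 0$ a.s. By completeness, $\bigcap_n \mathcal X_n = \{\zeta\}$; define $\lambda$ as the conditional distribution of $\zeta$ given $\mathcal X$.

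For the Frostman estimate, the key observation is that a fresh independent draw $\zeta'$ from $\lambda$ belongs to $\mathcal X_n$ iff the fresh path coincides with $(I_k)$ up to level $n$, an event of conditional probability $\prod_{k \leq n} \mathcal R^{(k-1)}_{I_k}$. Taking $n(r)$ maximal with $B_r(\zeta) \subset \mathcal X_{n(r)}$ yields $\lambda(B_r(\zeta)) \leq \prod_{k \leq n(r)} \mathcal R^{(k-1)}_{I_k}$, while maximality forces $r \geq \bigl(\prod_{k \leq n(r)+1} \mathcal R^{(k-1)}_{I_k}{}^\alpha\bigr) H^{\langle n(r)+1 \rangle}$ with $H^{\langle n \rangle}$ iid copies of the leaf-height $H$. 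Hence, for $s < 1/\alpha$,
\begin{equation*}
\frac{\lambda(B_r(\zeta))}{r^s} \leq \Bigl(\prod_{k=1}^{n(r)} \mathcal R^{(k-1)}_{I_k}{}^{1-\alpha s}\Bigr) \cdot \frac{1}{\mathcal R^{(n(r))}_{I_{n(r)+1}}{}^{\alpha s} (H^{\langle n(r)+1 \rangle})^s}.
\end{equation*}
The first factor decays exponentially since $(1-\alpha s)\mathbb E[\log \mathcal R_*] < 0$, whereas the second is an iid level-$(n(r)+1)$ fluctuation. A Borel--Cantelli argument using $\mathbb E[\mathcal R_*^{-\delta}] < \infty$ together with a small negative moment of $H$ shows this fluctuation grows at most polynomially in $n$, so the product tends to $0$ a.s. Frostman's criterion then gives $\dimh(\mathcal X) \geq s$, and letting $s \uparrow 1/\alpha$ closes the argument.

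The main obstacle is the Borel--Cantelli step, which requires establishing $\mathbb E[H^{-\eta}] < \infty$ for some $\eta > 0$. This in turn demands a direct study of the distributional fixed-point equation satisfied by the leaf-height $H$ (analogous to Equation \eqref{eqndptunif} but with $\mathcal R$-size-biased weights), together with control of the left tail of $H$ near $0$.
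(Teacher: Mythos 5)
Your overall strategy is the same as the paper's: the lower bound $\dimh(\mathcal X)\ge\dlow$ comes from the blocks being isometrically embedded, and the lower bound $\dimh(\mathcal X)\ge 1/\alpha$ comes from applying the mass distribution principle to the $\mathcal R$-rescaled measure along the $\mathcal R$-size-biased path. Your $\lambda$ is precisely the paper's $\overline{\mu}$ from Proposition~\ref{propmubarre}. The $\dlow$ bound is essentially what the paper asserts, and your more detailed argument (isometric embedding of each $B_k$, countable stability) is fine.

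The real issue is the step you yourself flag as "the main obstacle". You reduce the Frostman estimate to showing $\mathbb E[H^{-\eta}]<\infty$ for a negative moment of the (rescaled) distance from a $\overline{\mu}$-sampled point $\zb$ to the exit of the level-$n$ piece, and you leave this open, suggesting one attacks it "by a direct study of the distributional fixed-point equation satisfied by $H$". This is a genuine gap, and the route you propose to close it is not the one the paper uses. The paper's Lemma~\ref{lemmedimH} closes exactly this gap by a cleaner, more targeted argument exploiting Hypothesis~\eqref{hypLnonnul} rather than the fixed-point equation for $H$: along the size-biased genealogy $(i_n)$, the added lengths $\mathcal L^{(i_1\dots i_{n-1})}(1,i_n)$ form an iid sequence that exceeds a fixed $\eta>0$ at a geometric time $T$, giving the almost-sure lower bound $d(\rho,\zb)\ge\eta\,\bigl(\prod_{k\le T}\mathcal R_{*,k}\bigr)^\alpha$ (and similarly for $d(\zeta,\zb)$ with $\zeta\sim\mu$). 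Markov's inequality then yields $\mathbb P(d(\rho,\zb)<r)=O(r^\gamma)$ for a small $\gamma>0$, where finiteness of $\mathbb E\bigl[\prod_{k\le T}\mathcal R_{*,k}^{-\alpha\gamma}\bigr]=\sum_n\mathbb P(T=n)\,\mathbb E[\mathcal R_*^{-\alpha\gamma}]^n$ follows by choosing $\gamma$ small: the geometric decay of $\mathbb P(T=n)$ beats $\mathbb E[\mathcal R_*^{-\alpha\gamma}]^n\to 1$ as $\gamma\to 0$, with the hypothesis $\mathbb E[\mathcal R_*^{-\delta}]<\infty$ guaranteeing the $\gamma$-moment exists. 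This is precisely the left-tail bound you need, and it comes out of the existing hypotheses \eqref{hypLnonnul} and $\mathbb E[\mathcal R_*^{-\delta}]<\infty$ rather than an additional left-tail study of $H$. Without proving this, your argument does not close.

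Two smaller points. First, note that $\mathbb E[\mathcal R_*^{-\delta}]<\infty$ alone does not rule out $\mathcal R_*\equiv 1$; the fragmentation is genuinely non-degenerate here because Hypothesis~\eqref{condsufeqnY} with $p=1$ forces $\mathbb E[\sum_i \mathbf 1_{J\in\Gamma_i}\mathcal R_i^\alpha]<1$, and together with $\sum_i\mathcal R_i=1$ this gives $\mathbb E[\log\mathcal R_*]<0$. Second, once you have the tail bound, the remaining Borel--Cantelli / Frostman assembly is as in \cite[Theorem~5]{broutin2016self}, with the single modification that balls are centered at $\overline{\mu}$-sampled points; this is the part the paper simply refers out for.
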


\subsection{Application to classical examples in the litterature}

We recover two important classes of random metric spaces with our construction: the $\beta$-stable trees and the $\beta$-stable looptrees. For the distributions involved in both constructions, we refer to Section \ref{dirpoidir}.

\subsubsection{The $\beta$-stable looptrees of Curien and Kortchemski}
\label{subsubs:lt}

For general definitions about the $\beta$-stable looptrees with $\beta \in (1,2)$, we refer to the original paper \cite{curien2014random} which we summarize at the beginning of Section \ref{sectionlooptrees} before proving the following theorems. To give insight and motivation, they are constructed from a normalized excursion of a $\beta$-stable Lévy process with positive jumps, on which each jump of the process will code a loop in the looptree. Even if they are constructed as a random variable taking values in Gromov-Hausdorff isometry classes of compact metric spaces, it is possible to consider a $\kgp$ version of the $\beta$-stable looptrees as it is done later in Section \ref{sectionlooptrees}.

Looptrees arise naturally in the study of random planar maps and statistical mechanics  models on these maps, originally with \cite{LeGall2011Scaling} anterior to \cite{curien2014random}, in which Le Gall and Miermont implicitely encounter them.
Together with the original paper, Curien and Kortchemski prove in \cite{Curien2015Percolation} that a $\frac{3}{2}$-stable looptree appears as the scaling limit of the cluster boundaries in a critical percolation on the UIPT.
We refer more generally to the following non-exhaustive list \cite{Richier2018Limits,Kortchemski2020Boundary} on random maps.

Let $\Gamma$ be the plane tree drawn in Figure \ref{structuraltreelooptrees}: every vertex of index $i>0$ is a child of the root vertex $\emptyset$ which we will denote by $0$ so that $\Gamma$ is indexed by $\mathbb N$.

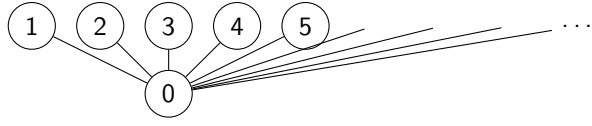
\begin{figure} 
\begin{center}
\begin{tikzpicture}[scale=.9]
\node[draw,circle] (0) at (2,0) {0};
\node[draw,circle] (1) at (0,1) {1};
\node[draw,circle] (2) at (1,1) {2};
\node[draw,circle] (3) at (2,1) {3};
\node[draw,circle] (4) at (3,1) {4};
\node[draw,circle] (5) at (4,1) {5};
\node (6) at (5,1) {};
\node (7) at (6,1) {};
\node (8) at (7,1) {};
\node (9) at (8,1) {\ldots};

\draw (0) -- (1);
\draw (0) -- (2);
\draw (0) -- (3);
\draw (0) -- (4);
\draw (0)--(5);
\draw (0)--(6);
\draw (0)--(7);
\draw (0)--(8);
\draw (0)--(9);
\end{tikzpicture}
\end{center}
\caption{The structural tree for the Looptrees\label{structuraltreelooptrees}}
\end{figure}

Let us define:
\begin{equation}\label{parameterslooptree}
\begin{split}
\left( X_0,X_1,X_2,X_3 \right) &\sim \dir \left(1-\frac{1}{\beta} , 1 - \frac{1}{\beta} , 1-\frac{1}{\beta} , \frac{2}{\beta}-1 \right), \\
\left(P_i^\downarrow \right)_{i \geq 0} &\sim \poidir \left( \frac{1}{\beta}, \frac{2}{\beta}-1  \right),
\end{split}
\end{equation}
chosen independently from each other. Now let us define for all $i \in \mathbb N$
$$
\mathcal R_i = \mathcal S_i := 
\left\{ 
\begin{array}{ll}
X_i & \mbox{if } i \leq 2 \\
X_3 P_{i-3}^\downarrow & \mbox{else}.
\end{array}
\right.
$$
We also define a random variable $W$ which is the \emph{$1/\beta$-diversity} of the decreasing sequence $(P_i^\downarrow)_i$:
\begin{equation*}
W:= \Gamma \left(1-\frac{1}{\beta} \right) \lim\limits_{i \rightarrow \infty} i (P_i^\downarrow)^{\frac{1}{\beta}}.
\end{equation*}

Let us consider
\begin{equation*}
\mathcal L(i,j) := X_3^{\frac{1}{\beta}} W \min \left\{ \vert U_i-U_j \vert , 1- \vert U_i-U_j \vert \right\} \mbox{ for every } (i,j) \in \mathbb N^2,
\end{equation*}
where $(U_i)_{i \in \mathbb N }$ is a sequence of iid random variables with uniform distribution on $[0,1]$, also independent from the formerly introduced random variables.

The last parameter we need is the scaling $\alpha \in (0,1)$ which we take equal to $\frac{1}{\beta}$.

\begin{lthe}
\label{thmlooptreev2}
The random $\beta$-stable looptree $\left( \mathscr L , d,\mu,\rho \right)$ satisfies the recursive distributional equation
\begin{equation} \label{looptreeqdistfpe}
\mathscr L \loi \psi \left( (\mathscr L_i)_{i \in \mathbb N}, (\mathcal R_i)_i , (\mathcal S_i)_i , \mathcal L \right) ,
\end{equation}
where $(\mathscr L_i)_i$ is a sequence of independent copies of $\mathscr L$, and independent of the random variables $(\mathcal R , \mathcal S , \mathcal L)$, and $\psi$ is the application from Section \ref{sectiondescriptionmodele} with the structural tree from Figure \ref{structuraltreelooptrees} and $\alpha=\frac{1}{\beta}$.
\end{lthe}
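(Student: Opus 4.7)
The strategy is to single out one distinguished loop of $\mathscr L$, decompose $\mathscr L$ around it, and recognize the resulting pieces as the output of the construction of Section \ref{sectiondescriptionmodele} with the parameters in \eqref{parameterslooptree}. The natural choice for the distinguished loop is the one produced by the spinal decomposition toward a $\mu$-random point $\zeta$: in terms of the normalized $\beta$-stable excursion $e$ of length $1$ coding $\mathscr L$, this is the loop associated with the jump of $e$ sitting at the first branch point of the spine from $\rho$ to $\zeta$ in the underlying $\beta$-stable tree. Denote by $[g, d]$ the time interval of this distinguished jump and by $\Delta = d - g$ its size.

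Concretely, the plan proceeds in three stages. First, establish the geometric decomposition: removing the distinguished loop $\mathcal C$ (of circumference $\Delta$) splits $\mathscr L$ into the sub-looptree $\mathfrak X_0$ containing $\rho$ (glued to $\mathcal C$ via its random point $\eta_0$, matching the identification $\eta_0 \leftrightarrow b_{0,0}$ of Section \ref{sectiondescriptionmodele}), two further structural sub-looptrees $\mathfrak X_1, \mathfrak X_2$ attached via their roots (corresponding to the excursion pieces naturally singled out at the two endpoints $g$ and $d$ of the main jump), and countably many sub-looptrees $\mathfrak X_i$ for $i \geq 3$ associated with the smaller jumps of $e$ inside $(g, d)$, each attached via its root at the loop-point corresponding to its time. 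Second, use the branching/self-similarity properties of the stable excursion (Chaumont-type decomposition at a jump, combined with the $\beta$-self-similarity of the stable Lévy process) to show that each rescaled sub-looptree is an independent copy of $\mathscr L$ and that the scaling exponent is forced to be $\alpha = 1/\beta$. Finally, the positions of the smaller jumps on $(g, d)$ form a Poisson point process which, conditionally on the aggregate residual mass, has i.i.d.\ uniform locations after rescaling the time interval to $[0, 1]$: this yields the uniform variables $U_i$ entering $\mathcal L$.

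The distributional identification then amounts to the following. The joint law of the four masses $(\mathcal S_0, \mathcal S_1, \mathcal S_2, \sum_{i \geq 3} \mathcal S_i) = (X_0, X_1, X_2, X_3)$ should be $\dir(1-\tfrac{1}{\beta}, 1-\tfrac{1}{\beta}, 1-\tfrac{1}{\beta}, \tfrac{2}{\beta}-1)$: this is an avatar of the Haas-Pitman-Winkel spinal mass decomposition for $\beta$-stable trees, transposed to the looptree setting. Conditionally on $X_3$, the ranked residual masses should have law $\poidir(\tfrac{1}{\beta}, \tfrac{2}{\beta}-1)$ and be independent of $(X_0, X_1, X_2)$: this reflects the description of the small jumps of the $\beta$-stable subordinator on a fixed interval as a Poisson point process conditioned on its aggregate mass. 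The circumference of $\mathcal C$, which equals the jump size $\Delta$ suitably rescaled, then works out to $X_3^{1/\beta} W$ with $W$ the $1/\beta$-diversity of $(P_i^\downarrow)$, by \eqref{diversity}.

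The main obstacle is the precise identification of the Dirichlet parameters $(1-\tfrac{1}{\beta}, 1-\tfrac{1}{\beta}, 1-\tfrac{1}{\beta}, \tfrac{2}{\beta}-1)$ for the three special masses together with their independence from the Poisson-Dirichlet residual: this combines Palm/size-biasing arguments for the Poisson point process of jumps of a stable subordinator with the Haas-Pitman-Winkel spinal decomposition, and requires careful tracking of how each structural piece inherits its mass from the distinguished spinal loop. Once this is in place, the remaining verification — that the added pairwise distances on the loop are exactly $\mathcal L(i, j) = X_3^{1/\beta} W \min\{|U_i - U_j|, 1 - |U_i - U_j|\}$ and that the assembled distance matrix coincides in distribution with that of $\mathscr L$ via the characterization of Section \ref{matrixdist} — is essentially bookkeeping to match \eqref{looptreeqdistfpe}.
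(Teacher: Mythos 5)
Your high-level strategy — single out a distinguished loop, decompose $\mathscr L$ around it, and match the pieces to the construction of Section \ref{sectiondescriptionmodele} — is the same as the paper's, but the way you select the loop contains a genuine gap. You pick \emph{one} $\mu$-random point $\zeta$ and speak of "the jump of $e$ sitting at the first branch point of the spine from $\rho$ to $\zeta$": this is ill-defined, since the branch points of a stable tree are dense along any geodesic, so there is no "first" one. More seriously, your decomposition claims \emph{three} structurally distinguished sub-looptrees $\mathfrak X_0, \mathfrak X_1, \mathfrak X_2$, but with a single random point you can only single out two components of the complement of the loop (the one containing $\rho$ and the one containing $\zeta$). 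The claim that $\mathfrak X_1$ and $\mathfrak X_2$ correspond to "the excursion pieces at the two endpoints $g$ and $d$ of the main jump" does not hold: in the looptree coding of the excursion, the times $g$ and $d$ are identified on the circle (both sit at loop-position $0$), so the left piece $[0,g)$ and the right piece $(d,1]$ glue into a \emph{single} connected component, the one containing $\rho$. The symmetric $\dir(1-\tfrac1\beta,1-\tfrac1\beta,1-\tfrac1\beta,\tfrac2\beta-1)$ with three equal parameters is a three-fold size-biasing; as the paper makes explicit right after Theorem \ref{thmlooptreev2}, the correct choice is to sample \emph{two} independent $\mu$-points $\zeta_1,\zeta_2$ and take the loop at the intersection of the three geodesics joining $\rho,\zeta_1,\zeta_2$: the three distinguished components are then those containing $\rho$, $\zeta_1$ and $\zeta_2$, with masses $X_0,X_1,X_2$.

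Beyond that structural point, you flag the identification of the Dirichlet and Poisson--Dirichlet parameters as "the main obstacle" but do not carry it out; the paper's proof is shorter precisely because it imports the mass-splitting law from \cite[Theorem 3.8]{chee2018recursive} (the $\beta$-stable excursion codes both the tree and the looptree, so the splitting of mass at this branching is identical), and only then recovers the two genuinely new looptree ingredients: the loop circumference, obtained as the hub width (local time) via \cite{MR2123249} and the $1/\beta$-diversity formula \eqref{diversity}; and the uniform attachment positions $U_i$, which the paper deduces from the invariance under rerooting of $\mathscr L_\beta$ rather than from a Poisson-point-process/conditioning argument as you sketch. If you fix the two-point selection of the loop, the rest of your outline is in the right spirit, but the distributional identifications you defer are exactly the content that has to be proved (or cited).
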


From there we can apply Theorem \ref{thmppl} to get a new characterization of the $\beta$-stable looptree:
\begin{lcor}\label{cor:ltpointfixe}
We can define $\mathscr L_\beta$ as the unique fixpoint in distribution in $\kgp$ to Equation \eqref{looptreeqdistfpe}.
\end{lcor}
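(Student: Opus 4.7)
The plan is to obtain existence of a fixed point from Theorem \ref{thmlooptreev2} and uniqueness from Theorem \ref{thmppl}. Theorem \ref{thmlooptreev2} already produces $\mathscr L_\beta$ as a solution in law of Equation \eqref{looptreeqdistfpe}, so what remains is to check that its parameters $(\Gamma, \tau, \alpha = 1/\beta)$ satisfy Hypotheses \eqref{hypLnonnul} and \eqref{condsufeqnY} with $p = 1$, and that the law of $\mathscr L_\beta$ actually lies in $\munp{1}$. The latter is a standard consequence of the a.s.\ compactness of the $\beta$-stable looptree proved in \cite{curien2014random}. Hypothesis \eqref{hypLnonnul} is essentially free: for every $i \geq 1$, $\mathcal L(0,i) = X_3^{1/\beta}\, W \min\{|U_0 - U_i|, 1 - |U_0 - U_i|\}$ is a.s.\ strictly positive because $X_3, W > 0$ a.s.\ and $U_0 \neq U_i$ a.s.

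The core of the argument is the verification of Hypothesis \eqref{condsufeqnY} with $p = 1$. The structural tree $\Gamma$ of Figure \ref{structuraltreelooptrees} is a star: $\Gamma_i = \{i\}$ for every $i \geq 1$ and $\Gamma_0 = \Gamma$. Since $\mathbb P(J = k \mid \mathcal R, \mathcal S, \mathcal L) = \mathcal S_k$ and $\mathcal R_k = \mathcal S_k$, conditioning on the rescaling factors together with independence of the Dirichlet vector from the Poisson--Dirichlet sequence yields
\[
\bbE\Bigl[\sum_{i \in \Gamma} \mathbf 1_{J \in \Gamma_i} \mathcal R_i^{1/\beta}\Bigr] = \bbE\bigl[X_0^{1/\beta}\bigr] + 2\,\bbE\bigl[X_1^{1+1/\beta}\bigr] + \bbE\bigl[X_3^{1+1/\beta}\bigr]\,\bbE\Bigl[\sum_{j \geq 0} (P_j^\downarrow)^{1+1/\beta}\Bigr].
\]
Each marginal $X_k$ is a Beta variable since the Dirichlet parameters of \eqref{parameterslooptree} sum to $2 - 1/\beta$; the last expectation is obtained from the classical $\poidir(a,b)$ moment identity $\bbE\bigl[\sum_j (P_j^\downarrow)^s\bigr] = \Gamma(1+b)\Gamma(s-a)/\bigl(\Gamma(1-a)\Gamma(s+b)\bigr)$ evaluated at $a = 1/\beta$, $b = 2/\beta - 1$, $s = 1 + 1/\beta$. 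Setting $\gamma := 1/\beta$ and simplifying via $\Gamma(3\gamma) = (3\gamma - 1)(3\gamma - 2)\ldots$ type reductions, the three contributions become $(1-\gamma)$, $(1-\gamma)$, and $(1-\gamma)(2\gamma - 1)/2$ respectively, so the total collapses to $(1-\gamma)(3 + 2\gamma)/2$. This function is strictly decreasing on $(1/2, 1)$, equals $1$ at $\gamma = 1/2$ and $0$ at $\gamma = 1$, hence is strictly less than $1$ for every $\beta \in (1,2)$. Finiteness of $\bbE[\mathcal L(0,J)]$ is automatic from $\mathcal L(0,J) \leq X_3^{1/\beta} W / 2$ together with the fact that the general Mittag--Leffler variable $W$ has all finite moments.

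The main obstacle is purely computational: correctly identifying the marginal Beta parameters inside the four-dimensional Dirichlet and then pairing them with the Poisson--Dirichlet moment identity at the right arguments so that the apparent $\Gamma$-function combinatorics telescopes to a clean bound less than one. Once this bookkeeping is done, the corollary follows at once by combining the fixed-point property provided by Theorem \ref{thmlooptreev2} with the uniqueness in $\munp{1}$ given by Theorem \ref{thmppl}.
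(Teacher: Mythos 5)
Your argument is correct, and your explicit computation checks out: with $\gamma=1/\beta$ the three contributions indeed reduce to $1-\gamma$, $1-\gamma$ and $(1-\gamma)(2\gamma-1)/2$, giving the ratio $(1-\gamma)(3+2\gamma)/2$, which is strictly decreasing on $(\tfrac12,1)$ and equal to $1$ at $\gamma=\tfrac12$, hence $<1$ for $\beta\in(1,2)$. However, the paper verifies the contraction Hypothesis \eqref{condsufeqnY} by a quite different, softer route. It observes (via \cite{chee2018recursive}) that the \emph{same} parameters $\mathcal R_i$, coming from the common $\beta$-stable excursion, also satisfy the stable-tree identity
\begin{equation*}
\bbE \Bigl[\sum_i \mathbf 1_{J \in \Gamma_i} \mathcal R_i^{1-1/\beta}\Bigr]=1,
\end{equation*}
because there the equation for the height of a random point is homogeneous (no $\mathcal L$ term). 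Since $1-1/\beta<1/\beta$ when $\beta<2$ and $x \mapsto \bbE[\sum_i \mathbf 1_{J \in \Gamma_i} \mathcal R_i^x]$ is strictly decreasing (all $\mathcal R_i\in(0,1)$), it follows at once that $\bbE[\sum_i \mathbf 1_{J \in \Gamma_i} \mathcal R_i^{1/\beta}]<1$. What your approach buys is an explicit closed-form value of the contraction constant, independent of the stable-tree result, at the cost of the Dirichlet and Poisson--Dirichlet moment bookkeeping; what the paper's approach buys is a one-line inequality that sidesteps all Gamma-function manipulations, at the cost of invoking the known normalization from \cite{chee2018recursive}. Both are valid; your computation can serve as an independent sanity check of that normalization.

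One small remark: you should also note (as the paper implicitly does via the iterative construction in Theorem \ref{thmppl}) that the law of $\mathscr L_\beta$ lies in $\munp{1}$; your appeal to a.s.\ compactness of the looptree gives a.s.\ finiteness of $d(\rho,\zeta)$ but not by itself integrability of its mean over both the measure and the law. This is however immediate from the existence part of Theorem \ref{thmppl}: the unique fixed point in $\munp{1}$ is also a fixed point in $\kgp$, and uniqueness of the fixed point of Equation \eqref{eqndptunif} in $L^1$ forces the law of $\mathscr L_\beta$ produced by Theorem \ref{thmlooptreev2} to coincide with it.
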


We can also understand this theorem "backwards": sample two points on $\mathscr L_\beta$ independently, according to the measure $\mu$, and consider the loop at the intersection of the $3$ geodesics between the two points and the root $\rho$. This loop has length $X_3^{1 / \beta} W$ which is the local time of the coding stable process. Now if we remove it from the looptree, we obtain countably many connected components which were glued uniformly around the loop. The ones containing respectively $\rho$ and the randomly sampled points are rescaled copies of $\mathscr L_\beta$ with respective size $X_0,X_1,X_2$ from Equation \eqref{parameterslooptree}, and all the other ones are also independent scaled copies of $\mathscr L_\beta$ with size $\mathcal R_i$ for $i>2$.

One may recognize the same scaling paramaters $(\mathcal R_i)_i$ as in \cite{chee2018recursive} on the $\beta$-stable tree, already mentioned in Section \ref{sec:related}. Indeed these parameters are directly computed from the $\beta$-stable process coding both the $\beta$-stable tree and looptree.
\begin{lcor}\label{cordimLT}
Almost surely, for any $\beta \in (1,2)$, $$\dimms ( \mathscr L_\beta)=\dimh (\mathscr L_\beta)=\beta.$$
\end{lcor}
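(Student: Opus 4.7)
The plan is to read off the result from the general bounds in Theorems \ref{thmborndimms} and \ref{thmborndimh}, applied to the recursive characterization of $\mathscr L_\beta$ from Corollary \ref{cor:ltpointfixe}. The key parameter is the scaling exponent $\alpha = 1/\beta$, which gives $1/\alpha = \beta$.

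First, I would identify the random block $\mathcal B$ of the recursion. Since the blocks $B_k$ for $k \geq 1$ are singletons and only $B_\emptyset$ carries non-trivial geometry, the distance matrix $\mathcal L$ is the trace, on independent uniform marked points, of the arclength metric on a circle of random circumference $X_3^{1/\beta} W$. Hence $\mathcal B$ is, in its $\gp$-equivalence class, a circle of a.s.\ positive and finite circumference equipped with a discrete probability measure, so
\[
\dimh(\mathcal B) = \dimms(\mathcal B) = 1 \quad \text{almost surely,}
\]
and Hypotheses \eqref{hypdimms}, \eqref{hypdimhblock} hold with $\dup = \dlow = 1$.

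Next I would verify the remaining hypotheses. Hypothesis \eqref{hypLnonnul} is immediate because $X_3 W > 0$ a.s. The structural tree $\Gamma$ has height one, so Proposition \ref{propfiniteH} gives \eqref{hyphaut} for every $p$, and in particular for some $p \geq 1/\alpha = \beta$. Furthermore $\sup_i \mathcal L(\emptyset,i) \leq \tfrac{1}{2} X_3^{1/\beta} W$; since $W$ is of Mittag-Leffler type and has finite moments of all orders, this supremum lies in every $L^p$. Checking the contraction hypothesis \eqref{condsufeqnY} with some $p \geq \beta$ reduces to a moment computation on a mixture of Beta and Poisson-Dirichlet marginals and is the main technical step; this is carried out using the explicit Dirichlet and $\poidir(1/\beta, 2/\beta-1)$ densities and the independence of the two blocks. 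Finally, a size-biased pick $\mathcal R_*$ decomposes as a mixture of a size-biased pick from the Dirichlet vector and of $X_3$ times a size-biased pick from the Poisson-Dirichlet sequence; both marginals are Beta with first parameter $1 - 1/\beta > 0$, so $\mathbb E[\mathcal R_*^{-\delta}] < +\infty$ for every small enough $\delta > 0$.

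With these verifications, Theorem \ref{thmborndimms} yields
\[
\dimms(\mathscr L_\beta) \leq \max\{1/\alpha, \dup\} = \max\{\beta, 1\} = \beta,
\]
while Theorem \ref{thmborndimh} yields
\[
\dimh(\mathscr L_\beta) \geq \max\{\dlow, 1/\alpha\} = \max\{1, \beta\} = \beta \quad \text{a.s.,}
\]
and the general inequality $\dimh \leq \dimms$ from \eqref{eq:reldimf} sandwiches both dimensions at $\beta$. The genuine obstacle is the bookkeeping required to check the contraction and negative-moment hypotheses against the explicit Dirichlet and Poisson-Dirichlet parameters; once that is in place the geometric conclusion is essentially formal, the geometry of $\mathcal B$ being one-dimensional while the self-similarity exponent satisfies $1/\alpha = \beta > 1$.
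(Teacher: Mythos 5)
Your proposal is correct and follows the same route as the paper: identify the random block $\mathcal B$ as a circle (so Hypotheses \eqref{hypdimms} and \eqref{hypdimhblock} hold with $\dup=\dlow=1$), check \eqref{hyphaut} via Proposition~\ref{propfiniteH} and the Mittag--Leffler moments of $\sup_i\mathcal L(\emptyset,i)$, verify the negative moment $\bbE[\mathcal R_*^{-\delta}]<\infty$ from the Beta marginals, and then invoke Theorems~\ref{thmborndimms} and~\ref{thmborndimh} together with $\dimh\le\dimms$. One remark: you are actually a bit more careful than the paper's own proof in explicitly flagging that \eqref{condsufeqnY} must be verified for some $p\ge 1/\alpha=\beta$ (so that Theorem~\ref{Corpfmoment}(i)--(ii) give $\bbE[h(\mathcal X)^p]<\infty$), a point the paper elides; although note you also do not actually carry out that computation. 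A small inaccuracy: a size-biased pick from the Dirichlet vector $(X_0,\dots,X_3)$ has Beta marginals whose first parameter is $a_i+1$ (that is, $2-1/\beta$ or $2/\beta$), not $1-1/\beta$; only the size-biased pick from the Poisson--Dirichlet sequence has first parameter $1-1/\beta$. The binding constraint for $\bbE[\mathcal R_*^{-\delta}]<\infty$ is in fact $\delta<1-\alpha$ (together with $\delta<2\alpha$ and $\delta<2-\alpha$, as in the paper), but since this is still positive your conclusion stands.
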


\subsubsection{The $\beta$-stable tree}

The $\beta$-stable tree for $\beta \in (1,2]$ studied by Duquesne and Le Gall in  \cite{duquesne2002random,duquesne2005probabilistic} are obtained as a special case of the Levy trees of Le Gall and Le Jan \cite{leGallleJan1998}, and generalize the Brownian CRT. Note that our results do not apply to the more general case of Lévy trees, which are not \emph{scale invariant} in general.

As for the stable looptrees, we consider $\mathcal T_\beta$ as a random variable in $\kgp$, and describe its distribution as a fixed-point of an equation like Equation \eqref{eqnppl}. We first describe the situation for $\beta \in (1,2)$, then we will add the case $\beta=2$ which corresponds to the Brownian CRT as a degenerate case.

We recall the results of Haas, Pitman and Winkel about the spinal mass partitions of a $\beta$-stable random tree from \cite[Corollary 10]{haas2009spinal}. To make an easier link between the notations, we take as our structural tree to define the application $\psi$ the same one as for the \emph{$\beta$-stable looptrees} drawn in Figure \ref{structuraltreelooptrees}, but we index the vertices by $\left(\mathbb Z^+ \right)^2$ so that $(1,1)$ is the root, and every $(i,j) \neq (1,1)$ is a child of the root.

Let $(P_i^\downarrow)_{i \geq 1}$ a sequence such that
\begin{equation*}
(P_i^\downarrow)_{i \geq 1} \sim \poidir \left(1-\frac{1}{\beta} , 1- \frac{1}{\beta} \right).
\end{equation*}
We note $W$ its $(1-1/\beta)$-diversity:
\begin{equation*}
W:= \Gamma\left(\frac{1}{\beta}\right) \lim\limits_{i \rightarrow +\infty} i(P_i^\downarrow)^{1-\frac{1}{\beta}},
\end{equation*}
where the limit exists almost surely.

Let $(U_i)_{i \geq 1}$ be iid random variables with uniform distribution on $[0,1]$,  independent of $(P_i^\downarrow)$.

For each $i \in \mathbb N$, we consider a random sequence
\begin{equation*}
(Q_j^{(i),\downarrow})_{j \geq 1} \sim \poidir \left(\frac{1}{\beta} , \frac{1}{\beta}-1 \right),
\end{equation*}
independent of each other and of the formerly introduced random variables.

We can now define our parameters on the structural tree indexed by $\left(\mathbb Z^+ \right)^2$:
\begin{equation*}
\begin{split}
\mathcal R_{i,j}&=\mathcal S_{i,j}:= P_i^\downarrow Q_j^{(i),\downarrow} , \mbox{ for all } (i,j) \in \left(\mathbb Z^+ \right)^2 \\
\mathcal L\left(\left(i,j\right) , \left(k,l \right) \right)& := \vert U_i-U_k \vert W \mbox{ for all } (i,j) \neq (k,l) \\
\alpha &:= 1-\frac{1}{\beta}.
\end{split}
\end{equation*}

Some comments are in order at this point. The block on which we glue the rescaled trees is a random interval with length $ W$, endowed with a discrete probability measure giving mass $P^\downarrow_i$ to the point at distant $W U_i$ from the origin. The second family of Poisson-Dirichlet distributions $Q^{(i),\downarrow}$ defined on each gluepoint of index $i$, describe the relative mass of the countably infinite number of trees glued at each node.% In this sense, the $1/\beta$-diversity of this family of distributions is the local time of the associated node in the or in other words the degree of the node.

Lastly, let $(I,J)$ be a couple  of random variables in $\mathbb Z^+$ taken as a size biased pick from the $\mathcal R_{i,j}$s: for all $i,j$
\begin{equation*}
\mathbb P \left( (I,J)=(i,j) \vert \mathcal R \right)= \mathcal R_{i,j}.
\end{equation*}
We then swap the picked couple $(I,J)$ with $(1,1)$ so that the space at the root of the structural tree is rescaled with a sized-biased parameter from all the $\mathcal R_{i,j}$s.

This last step is more of a technicality to apply our setup as with the spinal mass decomposition from \cite{haas2009spinal}, the root is at the end of the random interval. In fact, it only consists in resampling a new root at random by using the invariance by rerooting of the $\beta$-stable tree (see \cite{duquesne2009rerooting}) in order to have the root inside one of the rescaled trees.
%[13] D UQUESNE , T. and L E G ALL , J.-F. (2008). Re-rooting for Lévy trees and snakes. In prepara-tion.
%[14] D UQUESNE , T. and L E G ALL , J.-F. (2002). Random trees, Lévy processes and spatial branch-ing processes. Astérisque 281 1–147. MR1954248
%aldous CRT2 pour le BCRT.

\begin{lthe}\cite[Corollary 10]{haas2009spinal} \label{thmarbrestable}
The random $\beta$-stable tree $\mathcal T_\beta$ satisfies the recursive distributional equation 
\begin{equation}\label{arbrebetastable}
\mathcal T_\beta \loi \psi \left( (\mathcal T_\beta ^{(i,j)})_{i,j \in \mathbb Z^+}, (\mathcal R_{i,j})_{i,j} , (\mathcal S_{i,j})_{i,j} , \mathcal L \right) ,
\end{equation}
where $(\mathcal T_\beta ^{(i,j)})_{i,j \in \mathbb Z^+}$ is a sequence of independent copies of $\mathcal T_\beta$, and independent of the random variables $(\mathcal R , \mathcal S , \mathcal L)$, and $\psi$ is constructed with the structural tree from Figure \ref{structuraltreelooptrees} indexed by $(\mathbb Z^+)^2$ and with $\alpha=1-\frac{1}{\beta}$.
\end{lthe}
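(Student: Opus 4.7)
The plan is to derive Equation \eqref{arbrebetastable} by translating the spinal mass decomposition of \cite[Corollary 10]{haas2009spinal} into the framework of Section \ref{sectiondescriptionmodele}. Sample an independent random point $\zeta$ of $\mathcal T_\beta$ according to its mass measure, and consider the unique geodesic from the root $\rho$ to $\zeta$. By the cited result, this spine is a segment of length $W$, where $W$ is the $(1-1/\beta)$-diversity of a $\poidir(1-1/\beta, 1-1/\beta)$ sequence. The masses of the connected components of $\mathcal T_\beta$ obtained after removing the spine, ranked in decreasing order, form exactly this sequence, and the attachment point of the $i$-th component lies at distance $W U_i$ from $\rho$, with $(U_i)$ iid uniform on $[0,1]$ and independent of the masses. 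This reproduces the block $(B_\emptyset, \ell_\emptyset)$ of our construction as a random interval of length $W$ carrying the marked points $(WU_i)_i$, so that the distance matrix $\mathcal L$ on these points is the one prescribed before the statement.

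Each connected component attached at a single point on the spine is itself, by the Markov-branching / self-similar fragmentation property of the stable tree, decomposed at its branchpoint into countably many further subtrees whose relative masses form an independent $\poidir(1/\beta, 1/\beta-1)$ sequence $(Q_j^{(i), \downarrow})_j$. After rescaling the metric of each such sub-subtree by its mass to the power $1-1/\beta$ and renormalising its mass measure by the inverse of that mass, these rescaled sub-subtrees are iid copies of $\mathcal T_\beta$, independent of the Poisson-Dirichlet data. This justifies the choice of the structural tree from Figure \ref{structuraltreelooptrees} indexed by $(\mathbb Z^+)^2$, the exponent $\alpha = 1-1/\beta$, and the scaling family $\mathcal R_{i,j} = \mathcal S_{i,j} = P_i^\downarrow Q_j^{(i), \downarrow}$.

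The remaining subtlety is that our construction forces the root of the output to lie inside the rescaled sub-subtree attached at the root of the structural tree, whereas the Haas--Pitman--Winkel spinal decomposition places $\rho$ at one endpoint of the spine. To reconcile the two viewpoints we invoke the invariance under uniform rerooting of the $\beta$-stable tree established in \cite{duquesne2009rerooting}: rerooting $\mathcal T_\beta$ at an independent $\mu$-distributed point preserves its distribution. Because such a point falls inside the sub-subtree indexed by $(i,j)$ with probability proportional to its mass $\mathcal R_{i,j}$, this is exactly the size-biased pick $(I,J)$ described just before the statement, followed by the relabelling that swaps $(I,J)$ and $(1,1)$.

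The main obstacle is not probabilistic but organisational: one has to check, via the distance matrix characterisation of Section \ref{matrixdist}, that applying $\psi$ to iid copies of $\mathcal T_\beta$ with the parameters above returns the law of $\mathcal T_\beta$ itself. Concretely, this reduces to verifying that for every $k \geq 1$, the pairwise distances between $k$ iid $\mu$-samples of the glued output, decomposed along the spine via Equations \eqref{eq:descrdist1} and \eqref{eq:descrdist2}, have the same joint distribution as those between $k$ iid samples of $\mathcal T_\beta$, which is precisely the content of the spinal mass decomposition combined with the branching property and the rerooting invariance.
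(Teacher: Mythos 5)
Your proof takes essentially the same route as the paper's: reformulate the spinal mass decomposition from \cite[Corollary 10]{haas2009spinal} in the language of $\psi$, and invoke the invariance under uniform rerooting of the $\beta$-stable tree from \cite{duquesne2009rerooting} to move the root into a size-biased sub-subtree. The one place you gloss is the identification of the spinal length (and hence the coefficients $\mathcal L$) with the $(1-1/\beta)$-diversity $W$, which you attribute wholesale to Haas--Pitman--Winkel; the paper cites \cite[Theorem 12]{addario2019inverting} separately for that height statement.
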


Again we can apply Theorem \ref{thmppl} to get a new characterization of the $\beta$-stable tree:
\begin{lcor}\label{corstablet}
We can define $\mathcal T_\beta$ as the unique fixpoint in distribution in $\kgp$ to Equation \eqref{arbrebetastable}.
\end{lcor}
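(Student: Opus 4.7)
The plan is to deduce Corollary \ref{corstablet} from Theorem \ref{thmppl}. Theorem \ref{thmarbrestable} provides that $\mathcal T_\beta$ is a solution of Equation \eqref{arbrebetastable}, so what remains is uniqueness on $\munp{1}$. I will verify Hypotheses \eqref{hypLnonnul} and \eqref{condsufeqnY} with $p=1$ for the parameters defined above, and confirm that $\mathcal T_\beta \in \munp{1}$; Theorem \ref{thmppl} will then identify $\mathcal T_\beta$ as the unique fixed point in $\munp{1}$.

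Hypothesis \eqref{hypLnonnul} is immediate: since $W>0$ almost surely (being the $(1-1/\beta)$-diversity of a non-degenerate $\poidir$ sequence, cf.\ Section \ref{dirpoidir}) and $U_1\neq U_i$ almost surely for $i\geq 2$, we have $\mathcal L((1,1),(i,j)) = |U_1-U_i|\,W > 0$ almost surely for any $i\geq 2$, so the required event occurs with positive probability.

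For Hypothesis \eqref{condsufeqnY} with $p=1$, I exploit the fact that $\Gamma$ is a star. Let $K$ be a size-biased pick from $(\mathcal S_{(i,j)})$; since $\Gamma_{(1,1)}=\Gamma$ and $\Gamma_{(i,j)}=\{(i,j)\}$ otherwise, the quantity $\sum_v \mathbf 1_{K\in\Gamma_v}\mathcal R_v^\alpha$ equals $\mathcal R_{(1,1)}^\alpha + \mathbf 1_{K\neq(1,1)}\mathcal R_K^\alpha$. The final swap is a permutation, so $\sum_v \mathcal R_v^s$ is invariant under it while $\mathcal R_{(1,1)}$ becomes a size-biased pick from the full family. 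Taking expectations then expresses the total as a combination of $\bbE[\sum_v \mathcal R_v^{1+\alpha}]$ and $\bbE[\sum_v \mathcal R_v^{2+\alpha}]$. Since $\mathcal R_{(i,j)} = P_i^\downarrow Q_j^{(i),\downarrow}$ is a product of independent Poisson-Dirichlet sequences, each such sum factors into an $i$-sum and a $j$-sum, and via Fubini together with the size-biased pick identity $\bbE[\sum_i X_i^s] = \bbE[X_*^{s-1}]$, each factor reduces to a Beta moment of the appropriate size-biased pick (from $\lbeta(1/\beta,2-2/\beta)$ and $\lbeta(1-1/\beta,2/\beta-1)$ respectively). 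The verification of \eqref{condsufeqnY} then amounts to a gamma-function identity in $\beta$; I expect this bookkeeping to be the main technical obstacle, but the choice $\alpha=1-1/\beta$ is dictated precisely so that $\mathcal T_\beta$ is a genuine non-trivial fixed point, so the strict inequality must hold. The integrability $\bbE[\mathcal L((1,1),K)] < \infty$ follows immediately from $\mathcal L \leq W$ and the fact that the Mittag-Leffler distribution has moments of all orders.

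Finally, membership $\mathcal T_\beta \in \munp{1}$ follows from the well-known moment bounds on the height of a uniformly sampled leaf in a $\beta$-stable tree (see \cite{duquesne2002random,duquesne2005probabilistic}). Theorem \ref{thmppl} then identifies $\mathcal T_\beta$ as the unique fixed point of \eqref{arbrebetastable} in $\munp{1}$, which gives the corollary.
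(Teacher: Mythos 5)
Your overall strategy matches the paper's: verify Hypotheses \eqref{hypLnonnul} and \eqref{condsufeqnY} with $p=1$ and invoke Theorem~\ref{thmppl}. Hypothesis~\eqref{hypLnonnul} and the integrability of $\mathcal L(\emptyset,J)$ are handled essentially as the paper does. But your verification of the contraction inequality $\bbE\left[\sum_v \mathbf 1_{J\in\Gamma_v}\mathcal R_v^\alpha\right]<1$ is not actually carried out. You correctly reduce, via the star structure and the rerooting swap, to moments of the form $\bbE\left[\sum_v \mathcal R_v^{1+\alpha}\right]$ and $\bbE\left[\sum_v \mathcal R_v^{2+\alpha}\right]$, and you observe that by independence and the size-biased pick identity these become products of Beta moments. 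At that point, however, instead of evaluating the resulting $\Gamma$-function expression and checking it is $<1$, you assert that ``the strict inequality must hold'' because $\alpha=1-1/\beta$ was chosen so that $\mathcal T_\beta$ is a non-trivial fixed point. As written this is circular: you are using the conclusion you want (that $\mathcal T_\beta$ is the unique integrable fixed point) to justify one of the hypotheses needed to reach it.

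The paper avoids both the bookkeeping and the circularity by making your heuristic precise. It starts from the known distributional identity for the height $Y$ of a random point in the $\beta$-stable tree (which is a Mittag-Leffler distribution up to a constant, hence has finite positive mean), and the identity \eqref{eqndptunif} it satisfies,
\begin{equation*}
Y \loi \sum_i \mathbf 1_{J\in\Gamma_i}\mathcal R_i^\alpha Y^{(i)} + \mathcal L(\emptyset,J),
\end{equation*}
where the $Y^{(i)}$ are i.i.d.\ copies of $Y$, independent of $(\mathcal R,\mathcal S,\mathcal L)$. Taking expectations and dividing by $\bbE[Y]>0$ gives
\begin{equation*}
1=\bbE\left[\sum_i \mathbf 1_{J\in\Gamma_i}\mathcal R_i^\alpha\right] + \bbE\big[|U_1-U_2|\big],
\end{equation*}
and since the second term is strictly positive this yields $\bbE\left[\sum_i \mathbf 1_{J\in\Gamma_i}\mathcal R_i^\alpha\right]<1$ without any Beta or Gamma computation. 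You should either push through the direct computation you set up, or replace your final sentence with this expectation argument; the latter is shorter and is what the paper does.
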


The case $\beta=2$ appears a simpler case of this fixed-point setup. Indeed, we can define all the parameters as before, with the only distinction of the family of Poisson-Dirichlet distributions $Q^{(i),\downarrow}$ all fit the degenerate case given in Section \ref{dirpoidir}, giving mass $1$ to the first index, and $0$ to all the subsequent. Hence only one branch is attached at each node of the interval which reminds the fact that the Brownian CRT is almost surely binary.
\begin{lthe}
The results from Theorem \ref{thmarbrestable} and Corollary \ref{corstablet} also hold in the case $\beta=2$.
\end{lthe}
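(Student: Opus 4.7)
The strategy is to follow the proof of Theorem \ref{thmarbrestable} at $\beta=2$, verifying that the degenerate Poisson--Dirichlet factors $Q^{(i),\downarrow}\sim\poidir(1/2,-1/2)$ cause no obstruction. The distributional equation \eqref{arbrebetastable} itself is the $\beta=2$ instance of the spinal mass decomposition of Haas, Pitman and Winkel \cite[Corollary 10]{haas2009spinal}: the Brownian CRT, cut along the geodesic between its root and an independent uniformly sampled leaf, decomposes into a random interval of length $W$ (the $(1/2)$-diversity of a $\poidir(1/2,1/2)$ sequence) onto which independent rescaled copies of the CRT are grafted with scaling $\alpha=1/2$ and relative masses given by the same $\poidir(1/2,1/2)$ sequence. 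The degeneracy $Q^{(i),\downarrow}=(1,0,0,\ldots)$ encodes the almost-sure binarity of the CRT: exactly one subtree hangs off at each spinal location.

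Uniqueness then reduces to verifying the hypotheses of Theorem \ref{thmppl} with $p=1$. Hypothesis \eqref{hypLnonnul} is immediate since $W>0$ almost surely. For \eqref{condsufeqnY}, let $V$ be a size-biased pick from $(P_i^\downarrow)\sim\poidir(1/2,1/2)$, so that $V\sim\lbeta(1/2,1)$ with density $\tfrac{1}{2}v^{-1/2}$; a direct computation gives $\bbE[V^s]=(2s+1)^{-1}$, hence $\bbE[V^{1/2}]=\tfrac{1}{2}$ and $\bbE[V^{3/2}]=\tfrac{1}{4}$. Since only the $j=1$ coordinates carry mass and the structural tree has depth one, $\Gamma_{(1,1)}$ is the whole tree while $\Gamma_{(i,j)}=\{(i,j)\}$ for $(i,j)\neq(1,1)$, and the swap places a size-biased pick at position $(1,1)$ so that $\mathcal R_{(1,1)}$ is distributed as $V$. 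This yields
\[
\bbE\Big[\sum_{(i,j)\in\Gamma}\mathbf 1_{J\in\Gamma_{(i,j)}}\mathcal R_{(i,j)}^{1/2}\Big]=\bbE[\mathcal R_{(1,1)}^{1/2}]+\bbE[\mathcal R_J^{1/2}]-\bbE[\mathcal R_{(1,1)}^{3/2}]=\tfrac{1}{2}+\tfrac{1}{2}-\tfrac{1}{4}=\tfrac{3}{4}<1,
\]
where I used that $\bbE[\mathcal R_J^{1/2}]=\bbE[\sum_i \mathcal R_i^{3/2}]=\bbE[V^{1/2}]$ (the permutation induced by the swap being irrelevant under the expectation). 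Finiteness of $\bbE[\mathcal L(\emptyset,J)]\leq\bbE[W]$ follows from the fact that the $(1/2)$-diversity, being Mittag--Leffler distributed, has all positive moments finite.

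With both hypotheses verified, Theorem \ref{thmppl} furnishes a unique solution to the fixed-point equation in $\munp{1}$, which the first paragraph identifies with the law of $\mathcal T_2$. The expected main obstacle is purely bookkeeping: one must check that the degenerate $\poidir$ convention of Section \ref{dirpoidir} interacts correctly with the construction $\psi$ of Section \ref{sectiondescriptionmodele}, which is unproblematic because vertices with zero mass contribute nothing to the distance matrix and hence to the $\gp$-isometry class of the output.
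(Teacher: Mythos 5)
Your proof is correct and fills in details the paper itself leaves implicit: the paper has no standalone proof of this theorem, only the remark preceding it that the $\beta=2$ case is ``a simpler case of this fixed-point setup'' with the degenerate $\poidir(\tfrac12,-\tfrac12)$. Your identification of Equation \eqref{arbrebetastable} at $\beta=2$ with the spinal decomposition of the Brownian CRT via \cite[Corollary 10]{haas2009spinal}, and your verification of Hypotheses \eqref{hypLnonnul}--\eqref{condsufeqnY}, are the right ingredients.

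There is one genuine difference from the paper's own route through the analogous verification. In the proof of Corollary \ref{corstablet} for $\beta\in(1,2)$, the author verifies $\bbE\big[\sum_i \mathbf 1_{J\in\Gamma_i}\mathcal R_i^\alpha\big]<1$ \emph{indirectly}: starting from the known fact that $Y$ has a (scaled) Mittag--Leffler distribution, taking expectations in the fixed-point identity $Y\loi\sum_i\mathbf 1_{J\in\Gamma_i}\mathcal R_i^\alpha Y^{(i)}+\mathcal L(\emptyset,J)$, and deducing
\[
1=\bbE\Big[\sum_i\mathbf 1_{J\in\Gamma_i}\mathcal R_i^\alpha\Big]+\bbE\big[\,\vert U_1-U_2\vert\,\big],
\]
so the first expectation is strictly below $1$. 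You instead exploit the $\beta=2$ degeneracy to compute the expectation in closed form, finding $\tfrac34$. The two approaches buy different things: the paper's argument is uniform in $\beta$ and requires no beta-moment calculus, while yours is self-contained, makes the strictness of the inequality numerically transparent, and avoids invoking the a priori Mittag--Leffler law of the height. Either works; yours is arguably the cleaner fit for an explicitly stated ``$\beta=2$'' theorem. Your final remark about zero-mass coordinates leaving the $\gp$-isometry class unchanged is worth keeping, since the paper's definition of $\Sigma_\Gamma$ formally requires entries in $(0,1)$ and only the reduction to the effective star-tree on the $j=1$ coordinates restores this.
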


We recover the results of \cite{duquesne2005probabilistic} concerning fractal dimensions:
\begin{lcor}\label{cordimT}
Almost surely, for any $\beta \in (1,2]$, $$\dimms (  \mathcal T_\beta)=\dimh (\mathcal T_\beta)=\frac{\beta}{\beta-1}.$$
\end{lcor}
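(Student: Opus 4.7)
The plan is to obtain two matching bounds from the general Theorems \ref{thmborndimms} and \ref{thmborndimh} applied to the fixed-point description of $\mathcal T_\beta$ given in Corollary \ref{corstablet}, and then close the chain of inequalities with $\dimh \leq \dimms$ from \eqref{eq:reldimf}. The key numerical identity is $1/\alpha = \beta/(\beta-1)$, coming from $\alpha = 1-1/\beta$. I would first identify the fractal parameters of the block $\mathcal B$: by construction it is a compact real interval of random length $W>0$, where $W$ is the $(1-1/\beta)$-diversity of $(P_i^\downarrow)\sim \poidir(1-1/\beta,1-1/\beta)$, carrying the discrete measure $\sum_i P_i^\downarrow \delta_{W U_i}$. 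Its underlying metric space is almost surely a non-degenerate segment, so $\dimms(\mathcal B) = \dimh(\mathcal B) = 1$ almost surely and $\dup = \dlow = 1$. Since $\beta/(\beta-1) \geq 2 > 1$ for every $\beta \in (1,2]$, both maxima in the two theorems collapse to $1/\alpha$.

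For the upper bound I would apply Theorem \ref{thmborndimms}. Hypothesis \eqref{hyphaut} holds for every $p$ via Proposition \ref{propfiniteH} since the structural tree has height $1$, and the moment condition on $\sup_i \mathcal L(\emptyset,i)$ reduces to moments of $W$, which is Mittag-Leffler distributed and therefore has finite moments of all orders. This yields $\dimms(\mathcal T_\beta) \leq \beta/(\beta-1)$ almost surely.

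For the lower bound I would apply Theorem \ref{thmborndimh} and verify that $\mathbb E[\mathcal R_*^{-\delta}]<\infty$ for some $\delta>0$. A size-biased pick from $(\mathcal R_{i,j}) = (P_i^\downarrow Q_j^{(i),\downarrow})$ factorises as an independent product: a size-biased pick from $(P_i^\downarrow)$, distributed as $\lbeta(1/\beta, 2-2/\beta)$, times a size-biased pick from $(Q_j^{(I),\downarrow})$, distributed as $\lbeta(1-1/\beta, 2/\beta-1)$ (degenerate at $1$ when $\beta=2$), using the rule recalled in Section \ref{dirpoidir}. Both Beta laws have strictly positive first parameter, so negative moments of order $\delta < \min\{1/\beta, 1-1/\beta\}$ are finite; this minimum is strictly positive on $(1,2]$, which provides the required integrability. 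Theorem \ref{thmborndimh} then gives $\dimh(\mathcal T_\beta) \geq \beta/(\beta-1)$ almost surely, and combining with the Minkowski bound closes the chain.

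The step I expect to be most delicate is the verification that $\mathcal B$ has almost surely constant and explicit Minkowski and Hausdorff dimensions equal to $1$: because its measure is purely atomic, one must be careful about which metric space is actually meant in Hypotheses \eqref{hypdimms} and \eqref{hypdimhblock}. The remark following the construction in Section \ref{sectiondescriptionmodele} clarifies that $\mathcal B$ is taken to be the completion of the geodesic hull of the marked points, which here is genuinely a full real interval of length $W$, so both fractal dimensions do equal $1$ almost surely and the preceding argument goes through.
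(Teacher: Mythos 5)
Your proposal is correct and follows essentially the same route as the paper: identify $\dup=\dlow=1$ for the segment block, invoke Proposition \ref{propfiniteH} and Mittag-Leffler moments for the hypotheses of Theorem \ref{thmborndimms}, and verify the negative-moment condition of Theorem \ref{thmborndimh} via the factorisation of the size-biased pick $\mathcal R_*$ into two independent Beta variables coming from the two Poisson-Dirichlet levels, yielding $\delta<\min\{1/\beta,\,1-1/\beta\}$. You are somewhat more explicit than the paper (which mostly defers to the looptree case), in particular in spelling out the Beta parameters, the degeneracy at $\beta=2$, and the fact that $\dimh(\mathcal B)$ must be read for the geodesic hull of the marked points since the block measure is atomic --- all of which are correct readings of the paper's intent.
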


\subsection{Coupling - Overview of the main techniques}
\label{sectioncoupling}

The main idea of the proofs is to expand the fixed-point Equation \eqref{eqnppl}, and to iterate the application with coupled randomness.

Let $\Theta:= \bigcup_{n\geq 0} \Gamma^n$, a complete infinite tree. On this tree, for each $n$, define $\Theta_n:= \Gamma^n \subset \Theta$ the subspace of vertices at level $n$ in $\Theta$.

With that in mind, we now introduce:
\begin{equation*}
\left\{ \left( \mathcal R^{(\theta)},\mathcal S^{(\theta)}, \mathcal L^{(\theta)} \right) , \theta \in \Theta \right\},
\end{equation*}
a set of independent random variables where each $\left( \mathcal R^{(\theta)},\mathcal S^{(\theta)}, \mathcal L^{(\theta)} \right)$ is an independent copy of $\left( \mathcal R,\mathcal S, \mathcal L \right)$. This space will contain all the "randomness" of the constructions when we iterate our application described in Section \ref{sectiondescriptionmodele}, except for the input distributions and the location of the exit points $\eta_i$.

Now on the complete infinite tree $\Theta$, we assign the different components of $\mathcal R^{(\theta)}$ and $\mathcal S^{(\theta)}$ as follows: to the edge between $\theta$ and $\theta i$ we associate the values $\mathcal R^{(\theta)}_i$ and $\mathcal S ^{(\theta)}_i$.

These weights in distance and in measure induce the rescaling factors of the vertices, which we define on $\theta$ as the product over the edges on the unique path between the root $\emptyset$ and $\theta$ in $\Theta$. For $\theta=i_1 \ldots i_n$, we define:
\begin{equation} \label{eqn:rtheta}
\mathcal R_\theta:= \prod_{k=1}^n \mathcal R_{i_k}^{(i_1 \ldots i_{k-1})} , \quad \mathcal S_\theta:= \prod_{k=1}^n \mathcal S_{i_k}^{(i_1 \ldots i_{k-1})}.
\end{equation}
In a broader perspective, if for $\theta$  we denote by $C^{(\theta)}$ a function of $(\mathcal R^{(\theta)},\mathcal S^{(\theta)}, \mathcal L^{(\theta)})$, $C_\theta$ will denote the product of all the terms $C_{i_k}^{(i_1 \ldots i_{k-1})}$ on the path between $\emptyset$ and $\theta$ in $\Theta$ with the same notation as in Equation \eqref{eqn:rtheta}.

\section{Proof of existence, uniqueness and convergence statements}
\label{sectionexuniq}

We want to prove our results with respect to the Gromov-Prohorov distance, which boils down to the distribution of the matrix of distances between random points as recalled in Section \ref{matrixdist}. The first and crucial step will be to study the coefficients of this distance matrix, starting with the distance between the root and a random point from which we will be able to study the distance between two random points.

Existence and $L^1$ convergence for both distances are proved in Section \ref{drootunif}. Section \ref{sec:uniq} is devoted to the uniqueness result of Theorem \ref{thmppl} and Section \ref{sec:exist} concludes the proof of this theorem with the existence. Further properties of the fixpoints stated in Theorem \ref{Corpfmoment} and Theorem \ref{thmcviteres} are then proved at the end of the section.

\subsection{$L^1$ convergence of the distance between the root and a random point} \label{drootunif}

The existence and uniqueness results are strongly based on the study of the solutions of Equation \eqref{eqndptunif} and the equivalent one \eqref{eqd2pts} between two randomly chosen points. We will need existence and uniqueness statements for the random variable describing both these distances, for which we have a sufficient condition formalized by Hypotheses \eqref{hypLnonnul} and \eqref{condsufeqnY} for $p=1$.

\label{sectionminimalsolu}

Here we only make the first assumption of Theorem \ref{thmppl}, which is Equation \eqref{hypLnonnul}. Other hypotheses will appear further in the discussion.

Let us recall Equation \eqref{eqndptunif} satisfied by the height of a random point:
\begin{equation*}
    Y \loi \sum\limits_{i \in \Gamma} \underbrace{\mathbf 1_{J \in \Gamma_i} \mathcal R_i^\alpha}_{C_i} Y^{(i)} + \underbrace{\mathcal L (\emptyset,J)}_{Q},
\end{equation*}
which we simplify as
\begin{equation*}
    Y \loi \sum\limits_{i \in \Gamma} C_i Y^{(i)} + Q.
\end{equation*}

We follow the technique from \cite{jelenkovic2012implicit} in order to establish the existence and uniqueness of a solution to Equation \eqref{eqndptunif}. We define the process $(Y_m)_m$ for all $m \in \mathbb N$:
\begin{equation*}
	Y_m:= \sum_{k=0}^m \sum_{\theta \in \Theta_k} C_\theta Q^{(\theta)},
\end{equation*}
where with the same formalism as in Equation \eqref{eqn:rtheta}, for $\theta=i_1 \ldots i_n$, $C_\theta$ is defined as
\begin{equation*}
C_\theta := \prod_{k=1}^n \left( \mathbf 1_{J^{(i_1 \ldots i_{k-1})} \in \Gamma_{i_k}} (\mathcal R_{i_k}^{(i_1 \ldots i_{k-1})} )^\alpha \right). 
\end{equation*}

 We define in the same way $Y_m^\theta$ for all $\theta \in \Theta$. Notice also that $Y_m^\theta$ satisfies the recursive relation:
\begin{equation}
\label{recursionYmonotone}
Y_m^\theta = \sum\limits_{i \in \Gamma} C_i^{(\theta)} Y^{\theta i}_{m-1} + Q^{(\theta)}
\end{equation}

Let $Y$ be the almost sure limit of the sequence $(Y_m)$, well-defined by monotone convergence:
\begin{equation}
\label{lasolutionY}
\begin{split}
Y &:= \lim_{m\rightarrow \infty} Y_m= \sum_{m \in \mathbb N} \sum_{ \genfrac{}{}{0pt}{2}{\theta \in \Theta_m}{\theta=i_1 \ldots i_m} } C_\theta Q^{(\theta)} \\
&= \sum_{m \in \mathbb N} \sum_{ \genfrac{}{}{0pt}{2}{\theta \in \Theta_m}{\theta=i_1 \ldots i_m} } \left( \prod_{k=1}^m C_{i_k}^{(i_1 \ldots i_{k-1})} \right) Q^{(\theta)},
\end{split}
\end{equation}
and for a $\varphi \in \Theta$:
\begin{equation}
Y^\varphi = \sum_{m \in \mathbb N} \sum_{ \genfrac{}{}{0pt}{2}{\theta \in \Theta_m}{\theta=i_1 \ldots i_m} } \left( \prod_{k=1}^m C_{i_k}^{(\varphi i_1 \ldots i_{k-1})} \right) Q^{(\varphi \theta)}.
\end{equation}

By applying monotone convergence to the recursion \eqref{recursionYmonotone}, it is straightforward to verify that $Y^\theta$ solves Equation \eqref{eqndptunif}. Hence we proved the existence of a solution in distribution to this equation. This is indeed the minimal solution to recursion \eqref{eqndptunif} as discussed in \cite{alsmeyer2012fixed}, since for any random variable $Z$ solution to \eqref{eqndptunif} in distribution we can construct a solution $Y$ as above which is stochastically dominated by $Z$ (see the proof of Theorem 3.1 in \cite{alsmeyer2012fixed}). The next discussion will give details about this solution $Y$, its uniqueness and its finite moments, under further conditions.

\begin{lpro} \label{uniquenessYL1}
Assume that Hypotheses \eqref{hypLnonnul} and \eqref{condsufeqnY} hold with $p=1$, then $Y$ constructed above is the unique solution in distribution to \eqref{eqndptunif} with finite mean, and the convergence of the iterated distances $Y_m$ to $Y$ also holds in $L^1$.
\end{lpro}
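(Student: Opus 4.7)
My plan follows the classical smoothing-transform strategy: combine the monotone construction of $Y$ with an iteration-plus-coupling argument for uniqueness. The essential computation throughout is that, thanks to the independence of the blocks $\left(\mathcal R^{(\theta)},\mathcal S^{(\theta)},\mathcal L^{(\theta)}\right)$ across $\theta \in \Theta$, one has
\begin{equation*}
\bbE \left[ \sum_{\theta \in \Theta_k} C_\theta \right] = c^k , \qquad \text{where } c:= \bbE \left[ \sum_{i \in \Gamma} \mathbf 1_{J \in \Gamma_i} \mathcal R_i^\alpha \right] < 1
\end{equation*}
by Hypothesis \eqref{condsufeqnY} with $p=1$. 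Indeed, $\bbE[C_\theta]$ factorises along the path $\emptyset \to \theta$ because distinct blocks are independent, and Tonelli applied to the non-negative summands collapses the nested sum over $\Theta_k$ to $(\bbE[\sum_i C_i])^k$.

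Given this, existence and $L^1$ convergence are immediate. Independence between $C_\theta$ and $Q^{(\theta)}$ yields
\begin{equation*}
\bbE[Y_m] = \sum_{k=0}^m c^k \, \bbE[Q] \leq \frac{\bbE[Q]}{1-c} < +\infty ,
\end{equation*}
so by monotone convergence $Y \in L^1$ with $\bbE[Y] = \bbE[Q]/(1-c)$. Since $Y_m \uparrow Y$, we obtain $\bbE[Y-Y_m] = \sum_{k>m} c^k \bbE[Q] \to 0$, which is exactly $L^1$ convergence $Y_m \to Y$.

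For uniqueness, let $Z$ be any solution to Equation \eqref{eqndptunif} with $\bbE[Z] < +\infty$. On the enlarged probability space carrying the family $\bigl\{(\mathcal R^{(\theta)}, \mathcal S^{(\theta)}, \mathcal L^{(\theta)}) : \theta \in \Theta\bigr\}$ together with iid copies $\left(Z^{(\theta)}\right)_{\theta \in \Theta_{m+1}}$ of $Z$ taken independently of the blocks, iterating the distributional identity $m+1$ times yields
\begin{equation*}
Z \loi Y_m + R_m , \qquad R_m := \sum_{\theta \in \Theta_{m+1}} C_\theta \, Z^{(\theta)}.
\end{equation*}
Independence of $Z^{(\theta)}$ from $C_\theta$ gives $\bbE[R_m] = c^{m+1}\,\bbE[Z] \to 0$, hence the non-negative remainder $R_m$ converges to $0$ in probability. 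Combined with $Y_m \to Y$ almost surely, Slutsky's lemma yields $Y_m + R_m \to Y$ in distribution, so $Z \loi Y$.

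The only delicate point is the legitimacy of the substitution producing $Z \loi Y_m + R_m$: one must check that at each iteration, replacing each $Z^{(\varphi)}$ by $\sum_i C_i^{(\varphi)} Z^{(\varphi i)} + Q^{(\varphi)}$ with fresh independent blocks and fresh iid copies of $Z$ preserves the law of the right-hand side. This follows from the fact that the block family is iid across $\Theta$ and from the distributional equation satisfied by $Z$, but should be spelled out carefully to make sure that the joint distribution between the tree-level data $(C_\theta, Q^{(\theta)})$ and the leaf-level copies $Z^{(\theta)}$ is indeed the required one.
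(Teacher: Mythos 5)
Your proof is correct and follows the standard smoothing-transform strategy — a monotone construction plus an iterate-and-control-the-remainder argument — which is precisely the content of Lemmas 4.4 and 4.5 of Jelenkovi\'c and Olvera-Cravioto \cite{jelenkovic2012implicit} that the paper simply cites at this point (with $\beta=1$ in their notation). Your self-contained reproof, including the identity $\bbE\left[\sum_{\theta\in\Theta_k}C_\theta\right]=c^k$ and the Slutsky step for uniqueness, is sound; the ``delicate point'' you flag (the backward induction justifying $Z\loi Y_m+R_m$) is a real but routine bookkeeping argument that would indeed need to be written out in a fully rigorous version.
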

% namely that $ 0<\mathbb E [ \mathcal L (\emptyset,J)] < +\infty$ and $\mathbb E \left[ \sum_i \mathbf 1_{J \in \Gamma_i} \mathcal R_i^\alpha\right] <1$,

\begin{proof}
 $ 0<\mathbb E [ \mathcal L (\emptyset,J)] < +\infty$ and $\mathbb E \left[ \sum_i \mathbf 1_{J \in \Gamma_i} \mathcal R_i^\alpha \right] <1$ correspond to the hypotheses of lemmata 4.4 and 4.5 in \cite{jelenkovic2012implicit} with $\beta=1$, hence we can apply their result directly.
\end{proof}

There may exist further non integrable solutions. A broader study of the set of solutions of Equation \eqref{eqndptunif} is done in \cite{alsmeyer2012fixed}.

We also need control on the distance between two independent random points $\zeta,\zeta'$ with distribution $\mu$. From \cite[Lemma 18]{broutin2016self}, we slightly adapt the equation to obtain the recursive equation that must verify $D:=d(\zeta,\zeta')$:
\begin{multline*}
D \loi \sum_{\{i,j\} \subset \Gamma} \mathbf 1_{\{J_1,J_2\}=\{i,j\}} \Bigg( \mathcal L (i,j) + \mathbf 1_{q(i,j) \in \{i,j\}} \mathcal R_{q(i,j)}^\alpha D^{(q(i,j))}   \\  + \sum_{k \in \pi(i,j)} \mathcal R^\alpha_k Y^{(k)}\Bigg),
\end{multline*}
and if we want an equation close to what is considered in \cite{jelenkovic2012implicit} and in the beginning of this section, we group the terms in $D^{(i)}$:
\begin{multline}
\label{eqd2pts}
D \loi \sum_{i \in \Gamma} \left( \sum_{j \in \Gamma_i} \mathbf 1_{\{J_1,J_2\}=\{i,j\}}  \mathcal R_i^\alpha \right) D^{(i)} \\ + \sum_{\{i,j\} \subset \Gamma} \mathbf 1_{\{J_1,J_2\}=\{i,j\}} \left( \mathcal L (i,j) + \sum_{k \in \pi(i,j)} \mathcal R^\alpha_k Y^{(k)} \right),
\end{multline}
which we rewrite
\begin{equation*}
D \loi  \sum_{i \in \Gamma} \tilde{C}_i  D^{(i)} + \tilde{Q}
\end{equation*}
where the $D^{(i)}$s are iid random variables with the same distribution as $D$,  and the $Y^{(i)}$s are iid random variables with the same distribution as $Y$, $J_1,J_2$ are two independent random variables such that $\mathbb P (J_i=j \vert \mathcal R, \mathcal S , \mathcal L)=\mathcal S_j$ representing the index of the (canonical projection $\varphi^\circ$ of the) subset inside of which each of the random points $\zeta, \zeta'$ fall in.

We construct a random variable $\tilde{D}$ as we constructed $Y$ in Equation \eqref{lasolutionY} but with parameters $\tilde{C}, \tilde{Q}$:
\begin{equation*}
\tilde{D}= \lim\limits_{m \rightarrow + \infty} \tilde{D}_m = \sum_{m \in \mathbb N} \sum_{\genfrac{}{}{0pt}{2}{\theta \in \Theta_m}{\theta=i_1 \ldots i_m} } \left( \prod_{k=1}^m \tilde{C}_{i_k}^{(i_1 \ldots i_{k-1})} \right) \tilde{Q}^{(\theta)},
\end{equation*}
and do the same to define $(\tilde{D}^\varphi_m)_m$ and $\tilde{D}^\varphi$ for any $\varphi \in \Theta$. Notice that $\tilde{Q}^{(\theta)}$ is a function of branching parameters of level $\theta$ and of the $Y^{\theta k}$ for all $k \in \Gamma$ constructed in Section \ref{sectionminimalsolu}.

We obtain the same kind of results as for the distance between the root and a random point stated in Proposition \ref{uniquenessYL1}:
\begin{lpro}
\label{uniquenessD}
For any $\theta \in \Theta$, the sequence of iterated random variables $(\tilde{D}_m^\theta)_m$ converges in $L^1$ towards $\tilde{D}^\theta$. The distribution of $\tilde{D}^\theta$ is the unique solution to \eqref{eqd2pts} with finite mean.
\end{lpro}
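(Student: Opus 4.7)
The plan is to mirror the proof of Proposition \ref{uniquenessYL1} by applying once more Lemmas 4.4 and 4.5 of \cite{jelenkovic2012implicit}, this time to the fixed-point equation
\[
D \loi \sum_{i \in \Gamma} \tilde C_i \, D^{(i)} + \tilde Q
\]
with coefficients $\tilde C_i$ and source $\tilde Q$ read off from \eqref{eqd2pts}. The two hypotheses to verify are the contraction condition $\bbE [\sum_i \tilde C_i] < 1$ and the integrability $\bbE[\tilde Q] < +\infty$. Once these are established, the $L^1$ existence, uniqueness among integrable solutions, and $L^1$ convergence of the iterates $(\tilde D_m^\theta)_m$ follow verbatim from the arguments in Section \ref{sectionminimalsolu}, since the random variables $(\mathcal R^{(\theta)}, \mathcal S^{(\theta)}, \mathcal L^{(\theta)})$ for $\theta\in\Theta$ are independent copies.

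For the contraction, I would first observe that $\sum_i \tilde C_i$ vanishes whenever $J_1$ and $J_2$ are incomparable in $\Gamma$, and otherwise equals $\mathcal R_{q(J_1, J_2)}^\alpha$. Since $J_1 \in \Gamma_{q(J_1, J_2)}$ in the comparable case, one has the pointwise bound
\[
\sum_{i \in \Gamma} \tilde C_i \leq \sum_{i \in \Gamma} \mathbf 1_{J_1 \in \Gamma_i}\, \mathcal R_i^\alpha.
\]
Taking expectations and using that $J_1$ has the same conditional distribution as $J$ given $(\mathcal R, \mathcal S, \mathcal L)$, Hypothesis \eqref{condsufeqnY} with $p=1$ yields $\bbE [\sum_i \tilde C_i] < 1$.

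For the integrability of $\tilde Q$, I would split it into a length contribution $\mathcal L(J_1, J_2)$ and a path contribution $\sum_{k \in \pi(J_1, J_2)} \mathcal R_k^\alpha \, Y^{(k)}$. The first is controlled by the triangle inequality on the pseudometric $(B, \ell)$, namely $\mathcal L(J_1, J_2) \leq \mathcal L(\emptyset, J_1) + \mathcal L(\emptyset, J_2)$, which is integrable by Hypothesis \eqref{condsufeqnY} with $p=1$. For the second, I would use the path-inclusion $\pi(J_1, J_2) \subset \pi(\emptyset, J_1) \cup \pi(\emptyset, J_2)$, together with the independence of the family $(Y^{(k)})_{k\in\Gamma}$ from the branching parameters and from $J_1,J_2$, to get
\[
\bbE \left[ \sum_{k \in \pi(J_1, J_2)} \mathcal R_k^\alpha \, Y^{(k)} \right] \leq 2\, \bbE[Y] \cdot \bbE \left[ \sum_{i \in \Gamma} \mathbf 1_{J \in \Gamma_i} \mathcal R_i^\alpha \right] < +\infty,
\]
where $\bbE[Y] < +\infty$ comes from Proposition \ref{uniquenessYL1}.

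The main obstacle, compared to the single-point analog, is precisely that the source term $\tilde Q$ involves the random variables $Y^{(k)}$ from the lower level, so Proposition \ref{uniquenessYL1} is a genuine prerequisite: one needs $Y \in L^1$ before one can even make sense of $\bbE[\tilde Q]$. Once this dependency is handled, no new ingredient beyond those used for $Y$ is required, and the minimal-solution construction $\tilde D^\theta = \lim_m \tilde D_m^\theta$ gives both the $L^1$ convergence and uniqueness among integrable solutions.
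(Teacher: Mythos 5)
Your proposal is correct and follows essentially the same path as the paper: reduce to the inhomogeneous smoothing transform of \cite{jelenkovic2012implicit} (Lemmas 4.4 and 4.5) by verifying the contraction bound on $\bbE[\sum_i \tilde C_i]$ and the integrability of $\tilde Q$, then invoke the minimal-solution/monotone-limit construction exactly as for $Y$. Your verification is in fact a bit more explicit than the paper's: you identify $\sum_i \tilde C_i$ as $\mathbf 1_{\{J_1,J_2 \text{ comparable}\}}\,\mathcal R_{q(J_1,J_2)}^\alpha$ before bounding, and you spell out the integrability of $\tilde Q$ via the triangle inequality $\mathcal L(J_1,J_2)\le\mathcal L(\emptyset,J_1)+\mathcal L(\emptyset,J_2)$ and the path inclusion $\pi(J_1,J_2)\subset\pi(\emptyset,J_1)\cup\pi(\emptyset,J_2)$, where the paper contents itself with the terse remark that the source term ``is a finite sum of $L^1$ random variables.'' Both are correct; yours simply carries more of the burden on the page.
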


\begin{proof}
The second term of the equation is in $L^1$ since it is a finite sum of $L^1$ random variables according to our hypothesis on $\mathcal L$ and to Proposition \ref{uniquenessYL1}.

In order to apply the same result from \cite{jelenkovic2012implicit} as it was done in the proof of Proposition \ref{uniquenessYL1}, we also need the following term to be strictly smaller than $1$:
\begin{multline*}
\mathbb E \left[\sum_{i \in \Gamma} \left( \sum_{j \in \Gamma_i} \mathbf 1_{\{\{J_1,J_2\}=\{i,j\}\}}  \mathcal R_i^\alpha \right) \right]\\=
 \mathbb E \left[ \sum_{i \in \Gamma} \left( \mathbf 1_{\{J_1=i \mbox{ and } J_2 \in \Gamma_i, \mbox{ or } J_2=i \mbox{ and } J_1 \in \Gamma_i \}} \mathcal R_i^\alpha \right) \right],
\end{multline*}
the event of which we take the indicator function is a subset of $\{J_1 \in \Gamma_i \mbox{ and } J_2 \in \Gamma_i \}$, itself a subset of $\{J_1 \in \Gamma_i \}$. Hence we can bound it by $\bbE \left[ \sum_i \mathbf 1_{J \in \Gamma_i} \mathcal R_i^\alpha\right]<1$ by Hypothesis \eqref{condsufeqnY} with $p=1$.
\end{proof}

Lastly we introduce another process $(D_m)$ which is linked to the forthcoming sequence of compact rooted measured metric spaces $(\mathcal X_m^\theta)_m$ constructed recursively, presented in Section \ref{sequencerandomspaces} in order to construct a solution to Equation \eqref{eqnppl}. Let $D_m^\theta$ be defined recursively for any $m>0$ and $\theta \in \Theta$ with the $(D_{m-1}^{\theta i},Y_{m-1}^{\theta i})_{ i \in \Gamma}$ and the branching parameters of level $\theta$ just as in Equation \eqref{recursionYmonotone} for $Y_m^\theta$:
\begin{multline}
\label{eqdm}
D_m^\theta := \sum_{i \in \Gamma} \left( \sum_{j \in \Gamma_i} \mathbf 1_{\{J_1,J_2\}=\{i,j\}} ( \mathcal R_i^{(\theta)})^\alpha \right) D_{m-1}^{\theta i} \\ + \sum_{\{i,j\} \subset \Gamma} \mathbf 1_{\{J_1,J_2\}=\{i,j\}} \left( \mathcal L ^{(\theta)} (i,j) + \sum_{k \in \pi(i,j)} (\mathcal R_k^{(\theta)})^\alpha Y_{m-1}^{\theta k} \right).
\end{multline}

Notice here the difference with the sequence $(\tilde{D}_m^\theta)$ that was studied just before,  which satisfies:
\begin{multline}
\label{eqdtildem}
\tilde D_m^\theta := \sum_{i \in \Gamma} \left( \sum_{j \in \Gamma_i} \mathbf 1_{\{J_1,J_2\}=\{i,j\}}  \mathcal R_i^\alpha \right) \tilde D_{m-1}^{\theta i} \\ + \sum_{\{i,j\} \subset \Gamma} \mathbf 1_{\{J_1,J_2\}=\{i,j\}} \left( \mathcal L (i,j) + \sum_{k \in \pi(i,j)} \mathcal R^\alpha_k Y^{\theta k} \right),
\end{multline}
Equation \eqref{eqdm} and Equation \eqref{eqdtildem} differ in the term in $Y$: the sequence we want to study has terms $Y_{m-1}^{\theta k}$ instead of $Y^{\theta k}$ for $\tilde{D}$. We have $Y_{m-1}^{\theta k} \leq Y^{\theta k}$ almost surely for all $k$ since the former is the partial sum of the nonnegative terms $C_\varphi Q^{(\varphi)}$ up to a finite level and the latter is the sum up to infinity. Hence from this remark and from \eqref{eqdm} and \eqref{eqdtildem} we obtain for all $m,\theta$:
\begin{equation*}
D_m^\theta \leq \tilde{D}_m^\theta \mbox{ almost surely}.
\end{equation*}

We are finally able to state the result about the convergence of the sequence $(D_m^\theta)_m$:
\begin{lpro}
\label{limiteD}
Under Hypotheses \eqref{hypLnonnul} and \eqref{condsufeqnY} with $p=1$, for all $\theta \in \Theta$, the sequence $(D_m^\theta)_m$ converges in $L^1$ to a limit $D^\theta$ with the same distribution as $\tilde{D}$.
\end{lpro}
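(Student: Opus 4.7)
The plan is to couple $(D_m^\theta)_m$ with $(\tilde D_m^\theta)_m$ on the same probability space and show their difference vanishes in $L^1$; combined with the $L^1$ convergence $\tilde D_m^\theta \to \tilde D^\theta$ from Proposition \ref{uniquenessD}, this will yield both the $L^1$ convergence of $(D_m^\theta)_m$ and the identification $D^\theta := \tilde D^\theta$, whose distribution agrees with that of $\tilde D$ by stationarity of the hierarchical construction on $\Theta$. Since $Y_{m-1}^{\theta k}\leq Y^{\theta k}$ almost surely and the recursions \eqref{eqdm} and \eqref{eqdtildem} share the same nonnegative coefficients, a straightforward induction gives $D_m^\theta\leq \tilde D_m^\theta$ almost surely; set $\Delta_m^\theta := \tilde D_m^\theta - D_m^\theta \geq 0$. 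The triangle inequality then gives
$$\mathbb E\bigl[\bigl|D_m^\theta - \tilde D^\theta\bigr|\bigr] \leq \mathbb E[\Delta_m^\theta] + \mathbb E\bigl[\bigl|\tilde D_m^\theta - \tilde D^\theta\bigr|\bigr],$$
and it suffices to prove $a_m := \mathbb E[\Delta_m^\theta] \to 0$ (which is well-defined and independent of $\theta$ by stationarity).

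Subtracting \eqref{eqdm} from \eqref{eqdtildem} term by term yields the exact recursion
$$\Delta_m^\theta = \sum_{i\in\Gamma}\Bigl(\sum_{j\in\Gamma_i}\mathbf 1_{\{J_1,J_2\}=\{i,j\}}\mathcal R_i^\alpha\Bigr)\Delta_{m-1}^{\theta i} + \sum_{\{i,j\}\subset\Gamma}\mathbf 1_{\{J_1,J_2\}=\{i,j\}}\sum_{k\in\pi(i,j)}\mathcal R_k^\alpha\bigl(Y^{\theta k}-Y_{m-1}^{\theta k}\bigr).$$
Taking expectations, using that each $\Delta_{m-1}^{\theta i}$ is built from the subtree of $\Theta$ rooted at $\theta i$ (hence independent of the branching parameters at level $\theta$) and that each $Y^{\theta k}-Y_{m-1}^{\theta k}$ is independent of $\mathcal R_k^\alpha$ and $(J_1,J_2)$, we obtain
$$a_m = \tilde q\, a_{m-1} + c\, \epsilon_{m-1},$$
where $\tilde q := \mathbb E\bigl[\sum_i\sum_{j\in\Gamma_i}\mathbf 1_{\{J_1,J_2\}=\{i,j\}}\mathcal R_i^\alpha\bigr]<1$ as established in the proof of Proposition \ref{uniquenessD}, $\epsilon_m := \mathbb E[Y-Y_m]\to 0$ by Proposition \ref{uniquenessYL1}, and $c := \mathbb E\bigl[\sum_{\{i,j\}}\mathbf 1_{\{J_1,J_2\}=\{i,j\}}\sum_{k\in\pi(i,j)}\mathcal R_k^\alpha\bigr]$.

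The main technical point is the finiteness of $c$: \emph{a priori} the inner sum $\sum_{k\in\pi(i,j)}\mathcal R_k^\alpha$ could be infinite, as $\pi(i,j)$ is unbounded when $\Gamma$ is infinite. Finiteness follows from Proposition \ref{uniquenessD}, which gives $\mathbb E[\tilde D]<\infty$: taking expectations in \eqref{eqd2pts} and factoring out the iid $Y^{(k)}$ yields
$$\mathbb E[\tilde D] = \tilde q\,\mathbb E[\tilde D] + \mathbb E\bigl[\mathcal L(J_1,J_2)\bigr] + \mathbb E[Y]\, c,$$
and since $\mathbb E[Y]>0$ (because $Y\geq \mathcal L(\emptyset,J)$ is strictly positive with positive probability by \eqref{hypLnonnul}) and the left-hand side is finite with $\tilde q<1$, we get $c<\infty$. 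Once $c$ is finite and $\tilde q<1$, iterating the recursion from $a_0<\infty$ gives $a_m \leq \tilde q^m a_0 + c\sum_{j=0}^{m-1}\tilde q^{m-1-j}\epsilon_j$; a standard Cesàro-type argument exploiting $\epsilon_j\to 0$ and $\sum_k\tilde q^k<\infty$ then yields $a_m\to 0$, concluding the proof.
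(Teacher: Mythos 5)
Your proof is correct, but it takes a genuinely different route from the paper's. The paper's argument is shorter: it observes that $(D_m^\theta)_m$ is increasing (by induction, since $D_0^{\theta i} = 0 \leq D_1^{\theta i}$ and $Y_{m-1}^{\theta k}$ is increasing) and dominated by $\tilde D^\theta \in L^1$, so the monotone convergence theorem gives $L^1$ convergence to some $D^\theta$; passing to the limit in the recursion \eqref{eqdm} by monotone convergence shows $D^\theta$ solves \eqref{eqd2pts}, and the $L^1$-uniqueness of Proposition \ref{uniquenessD} identifies its distribution. Your approach instead computes the $L^1$ gap $a_m = \mathbb E[\tilde D_m^\theta - D_m^\theta]$ explicitly via a contraction recursion $a_m = \tilde q\, a_{m-1} + c\,\epsilon_{m-1}$ with $\tilde q < 1$; this is heavier but buys a little more, namely the pathwise identification $D^\theta = \tilde D^\theta$ a.s.\ in the coupled construction, rather than a mere distributional identity, and it avoids needing to verify that the limit solves the fixed-point equation. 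Both are valid.

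One remark on your $c<\infty$ step: the detour via $\mathbb E[\tilde D]<\infty$ works, but there is a one-line direct bound. Since $\mathbf 1_{k\in\pi(J_1,J_2)} \leq \mathbf 1_{J_1\in\Gamma_k} + \mathbf 1_{J_2\in\Gamma_k}$, nonnegativity and the fact that $J_1, J_2$ both have the conditional law of $J$ give
\begin{equation*}
c = \mathbb E\Bigl[\sum_{k\in\pi(J_1,J_2)}\mathcal R_k^\alpha\Bigr] \leq 2\,\mathbb E\Bigl[\sum_{k\in\Gamma}\mathbf 1_{J\in\Gamma_k}\mathcal R_k^\alpha\Bigr] = 2\varrho < 2,
\end{equation*}
using \eqref{condsufeqnY} with $p=1$ directly. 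This sidesteps any implicit reliance on $\tilde Q \in L^1$, which is precisely what the bound on $c$ is needed for in Proposition \ref{uniquenessD} in the first place, and so is the cleaner way to close the loop.
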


\begin{proof}
For a fixed $\theta$, the sequence $(D_m^\theta)_m$ is increasing, and satisfies for all $m \geq 0$ that almost surely $D_m^\theta \leq \tilde{D}_m^\theta \leq \tilde{D}^\theta$ which is $L^1$. Hence $D_m^\theta$ converges in $L^1$ towards $D^\theta$. 

Moreover, by monotone convergence in Equation \eqref{eqdm}, $D^\theta$ satisfies equation \eqref{eqd2pts} in distribution, and by uniqueness of a $L^1$ solution in distribution we get $D^\theta \loi \tilde{D}^\theta$.
\end{proof}

\subsection{Uniqueness}
\label{sec:uniq}

\begin{lpro} \label{propuniq}
Let $\mathcal X := \left( X,d,\mu,\rho \right)$ and $\mathcal X' := \left( X',d',\mu',\rho' \right)$ be two random variables in $\kgp$, satisfying Equation \eqref{eqnppl} in distribution under \eqref{hypLnonnul} and \eqref{condsufeqnY} for $p=1$.
Then $\mathbb E \big[ \nu^{\mathcal{X}} \big] = \mathbb E \big[ \nu^{\mathcal X'}\big]$.
\end{lpro}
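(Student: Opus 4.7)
The plan is to use Proposition \ref{critereCVmesures}(i) to reduce the problem to showing that for every $k \geq 0$, the joint law of the random matrix $M_{\mathcal X} := (d(\zeta_i, \zeta_j))_{0 \leq i,j \leq k}$ (with $\zeta_0 = \rho$ and $\zeta_1, \ldots, \zeta_k$ iid of law $\mu$) coincides with that of $M_{\mathcal X'}$, and then to prove this by a coupling argument: iterate the fixed-point equation $m$ times using the coupled randomness $\{(\mathcal R^{(\theta)}, \mathcal S^{(\theta)}, \mathcal L^{(\theta)})\}_{\theta \in \Theta}$ from Section \ref{sectioncoupling} together with shared uniform random variables for the point-sampling, producing coupled $\mathcal X^{(m)}$ and $\mathcal X'^{(m)}$ on a common probability space. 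The fixed-point property gives $\mathcal X^{(m)} \loi \mathcal X$ and $\mathcal X'^{(m)} \loi \mathcal X'$ for every $m$. Coupling the sampling through shared uniforms forces the level-$m$ addresses $\theta_0, \ldots, \theta_k$ of the sampled points to be the same measurable functions of the coupled randomness in both constructions.

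Next, I would iterate the decompositions \eqref{eq:descrdist1}--\eqref{eq:descrdist2} to write each entry as
\[
d^{(m)}_{\mathcal X}(\zeta_i, \zeta_j) = S^{(m)}_{ij} + R^{(m)}_{ij}(\mathcal X),
\]
where the \emph{skeleton} $S^{(m)}_{ij}$ depends only on the coupled parameters and addresses (hence is identical in the two constructions), while the \emph{residue} $R^{(m)}_{ij}(\mathcal X)$ is a sum of terms of the form $\mathcal R_\tau^{\alpha}\, Z^{(\tau)}$ with $\tau$ ranging over the level-$m$ copies visited by the geodesic between $\zeta_i$ and $\zeta_j$ (endpoint copies at $\theta_i,\theta_j$ plus intermediate ``crossings'' produced by recursively expanding the $\eta$-heights at every level) and $Z^{(\tau)}$ is a height or a pairwise distance inside the iid level-$m$ copy at $\tau$. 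Because $\mathcal X, \mathcal X' \in \munp{1}$, each such $Z^{(\tau)}$ has finite mean bounded by $2\bbE[Y]$, with $Y$ the unique $L^{1}$ height variable from Proposition \ref{uniquenessYL1}, and is independent of the coupled randomness.

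The crucial estimate is the geometric factorization
\[
\bbE\!\left[\sum_{\tau \in \Theta_m} \mathcal S_\tau\,\mathcal R_\tau^{\alpha}\right] = \left(\bbE\!\left[\sum_{i \in \Gamma} \mathcal S_i\,\mathcal R_i^{\alpha}\right]\right)^{m} \leq \left(\bbE\!\left[\sum_{i \in \Gamma} \mathbf 1_{J \in \Gamma_i}\,\mathcal R_i^{\alpha}\right]\right)^{m},
\]
obtained by iterated conditioning across independent levels and the inequality $\mathcal S_i \leq \sum_{j \in \Gamma_i} \mathcal S_j$; Hypothesis \eqref{condsufeqnY} with $p=1$ forces this quantity to decay geometrically. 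Combined with a bookkeeping argument bounding the number of ``involved'' $\tau$'s by a polynomial in $m$ for each pair, this gives $\bbE\big[\big|R^{(m)}_{ij}(\mathcal X)\big|\big] \to 0$ and similarly for $\mathcal X'$, whence $\|M^{(m)}_{\mathcal X} - M^{(m)}_{\mathcal X'}\|_{L^{1}} \to 0$ and the difference tends to zero in probability. Since $M^{(m)}_{\mathcal X} \loi M_{\mathcal X}$ and $M^{(m)}_{\mathcal X'} \loi M_{\mathcal X'}$ for every $m$, Slutsky's lemma yields $M_{\mathcal X} \loi M_{\mathcal X'}$ for every $k$, and therefore $\bbE[\nu^{\mathcal X}] = \bbE[\nu^{\mathcal X'}]$.

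The main obstacle is the combinatorial bookkeeping of the residue: each iteration expands intermediate ``crossings'' into new level-$m$ sub-crossings, so one must verify that the expected total rescaling factor over all $\tau$ appearing in $R^{(m)}_{ij}$ stays geometrically small despite the possibly growing number of summands per pair. The estimates needed are uniform-in-pair versions of those underlying Propositions \ref{uniquenessYL1} and \ref{uniquenessD}, now applied simultaneously to all pairs of sampled points.
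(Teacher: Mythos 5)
Your approach differs genuinely from the paper's. The paper fixes the matrix size $n$ and proceeds by induction, showing that $M_n$ satisfies a perpetuity $M_n \loi A_n M_n + B_n$ in which the scalar $A_n \in [0,1]$ comes from the single-step expansion (the cases where, effectively, the root/exit point together with the sampled points all fall into one rescaled copy), $B_n$ depends only on lower-order matrices $M_m$ for $m<n$ whose laws agree by the induction hypothesis, and uniqueness is then read off from Vervaat's theorem on perpetuities. You instead iterate $m$ times with coupled randomness and drive the level-$m$ residue to zero in $L^1$. Both routes start from the reconstruction theorem (Proposition~\ref{critereCVmesures}(i)) and both need Proposition~\ref{uniquenessYL1} as input (the paper as a base case, you to identify the law of the height inside the level-$m$ copies of $\mathcal X$ and $\mathcal X'$). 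Your route also buys you Theorem~\ref{thmcviteres} almost for free, while the paper proves it separately.

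However, your residue estimate as stated is flawed. The claim that a ``bookkeeping argument bounds the number of involved $\tau$'s by a polynomial in $m$'' is false in general: when $\Gamma$ is infinite, already at level~$1$ the geodesic between $\zeta_i$ and $\zeta_j$ crosses the entire $\Gamma$-path $\pi(\theta_i[1],\theta_j[1])$, which can be countably infinite, and the set of level-$m$ crossings grows further at each level since every crossing spawns new sub-crossings along the height-geodesic $\rho\to\eta$ inside it. There is no polynomial (nor even finite) bound on the number of involved addresses. What actually saves the argument is the \emph{total rescaling mass}, not the number of summands: for a fixed pair $(i,j)$ the residue is exactly the tail of the minimal-solution series of Section~\ref{sectionminimalsolu}, namely a sum of the form $\sum_{\theta \in \Theta_m} C_\theta Z^{(\theta)}$ with $\bbE[Z^{(\theta)}]$ bounded uniformly (by $\bbE[Y]$ or $\bbE[D]$, since $\mathcal X\in\munp 1$ is a fixed point, so by Propositions~\ref{uniquenessYL1} and~\ref{uniquenessD} the height and pair-distance inside the level-$m$ copies have the distributions of $Y$ and $\tilde D$) and $\bbE[\sum_{\theta\in\Theta_m} C_\theta]\leq \varrho^m$ with $\varrho = \bbE[\sum_i \mathbf 1_{J\in\Gamma_i}\mathcal R_i^\alpha]<1$. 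Equivalently, one can argue by monotone convergence: the skeleton $S^{(m)}_{ij}$ is nondecreasing in $m$, bounded above by $d(\zeta_i,\zeta_j)$ which is $L^1$, and converges to it a.s., so $R^{(m)}_{ij}\downarrow 0$ in $L^1$. Either of these replaces the incorrect polynomial count; once fixed, summing over the finitely many pairs and applying Slutsky as you do concludes the proof. One more point you should make explicit: the skeleton depends on the level-$\leq m$ addresses of the exit points $\eta^{(\sigma)}$, and these are measurable functions of the coupled $\mathcal S$-weights and shared uniforms only because the recursive measure $\mu^{(\sigma)}$ at each stage decomposes along the $\mathcal S$-masses; this is what makes the skeleton identical in the two constructions.
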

%, and $\mathbb E [d(\rho,\zeta)]<+\infty$  $\mathbb E[d'(\rho',\zeta')]<+\infty$ where $\zeta,\zeta'$ are chosen independently respectively on $X$ according to $\mu$, and on $X'$ according to $\mu'$

\begin{proof}[Proof of Theorem \ref{thmppl}: uniqueness]

It follows from Proposition \ref{propuniq} and the characterization of Proposition \ref{critereCVmesures}.
\end{proof}

\begin{proof}[Proof of Proposition \ref{propuniq}]
We mostly follow the proof of \cite[Proposition 12]{broutin2016self}, since it is close to the same problem.
Let us denote $M_n$ a standard random variable on $\mathbb M_n$ with distribution $\mathbb E \left[\nu_n^{\mathcal{X}}\right]$ (resp. $M'_n$ with distribution $\mathbb E  \big[ \nu_n^{\mathcal X'}\big]$).

By Proposition \ref{uniquenessYL1}, $M_1$ and $M'_1$ have the same distribution since they are both solutions in distribution of Equation \eqref{eqndptunif} (more precisely their non diagonal coordinates are). We intend to show that for all $n\geq 2$, $M_n$ satisfies a stochastic fixed-point equation which is known to admit at most one solution. If it holds, then $\mathbb E\big[\nu_n^{\mathcal{X}}\big] = \mathbb E \big[\nu_n^{\mathcal{X}'}\big]$ for all $n$, and the theorem is proved.

In order to obtain this result, we will proceed by induction on $n$. Let $A \subset \mathbb M_n$ be a measurable set. From Equation \eqref{eqnppl}, we obtain:
\begin{equation*}
\mathbb E \left[ \nu_n^{\mathcal{X}}\right](A) =\mathbb E \left[ \nu_n^{\psi((\mathcal{X}_i)_i, \mathcal R, \mathcal S, \mathcal L)}(A)\right],
\end{equation*}
and if we expand:
\begin{multline}
\mathbb E \left[ \nu_n^{\mathcal{X}}\right](A) =  \int_{\Sigma_K^2\times \mathcal D_\Gamma }  d\tau(r,s,\ell) \int_{(\kgp)^{\Gamma}}  d\aleph ^{\otimes {\Gamma} }(x_1 ,x_2, \ldots) \\
\mathbb E \left[ \nu_n^{\mathcal{X}^*((x_i)_i, r,s,\ell)}\right](A)
\end{multline}
where $\mathcal X^*$ has distribution $\psi ((x_i)_i,r,s,\ell)$.

We want to describe recursively $M_n$ as a function of where the points fall. We cannot do it straightforwardly, but we introduce another random variable $W_n$ with the same distribution to compute this.

Fix $r,s,\ell$ and let $(\overline{x_i})_i$ be arbitrary representatives of $(x_i)_i$. Sample $\eta_i$ on $\overline{x_i}$ according to $\overline{\mu_i}$, for $i \in \Gamma$. Now write $(\overline{x},\overline{d},\overline{\mu},\overline{\rho})$ constructed from $\Gamma, \alpha, r,s,\ell,(\overline{x_i})_i$, $ (\eta_i)_i$ with the construction described in Section \ref{sectiondescriptionmodele}. To obtain the desired random matrix, we need to generate random independent points $\zeta_j$ for $1 \leq j \leq n$, but we construct them so that we keep the information of the subspace $\overline{x_i}$ in which it falls (or more precisely its image under the canonical projection $\varphi^\circ$). To do so, we first let $\left( J_j\right)_{1 \leq j \leq n}$ be a family of iid random variables on $\Gamma$ with $\mathbb P \left( J_j=i\right)=s_i$ for all $i \in \Gamma$: $J_j$ is the index of the subset in which falls the $j$th random point. For $1 \leq j \leq n$ and $i \in \Gamma$ sample $\tilde{\zeta_{i,j}}$ independently from each others and from the $\eta_i$-s, according to the measure $\overline{\mu_{i}}$ on $\overline{x_{i}}$. Now take $\zeta_j:=\varphi^\circ\left(\tilde{\zeta_{J_j,j}}\right)$ for all $1 \leq j \leq n$, and $\zeta_0:= \overline{\rho}$. By construction, the matrix $W_n:=\big( d(\zeta_j,\zeta_k) \big)_{0 \leq j,k \leq n}$ has the desired distribution $\mathbb E \left[ \nu_n^{\mathcal X^* ((x_i)_i,r,s,\ell)} \right]$.

The random matrix $W_n$ is more handy, since we can decompose it by looking in which subspace $\varphi^\circ(\overline{x_i})$ do the $\zeta_j$ points fall by the value of each one of the $J_j$. In each space $\overline{x_i}$, we may compare the distances inside this space between all the $\zeta_j$ falling in $\varphi^\circ(\overline{x_i})$ possibly in addition to the root $\overline{\rho_i}$ and the branchpoint $\eta_i$.

For all $i \in \Gamma$ and $1 \leq m \leq n$ we write $Y^{(m)}_{\overline{x_i}}$ for a random variable with distribution $\nu_m^{\overline{x_i}}$. Let $\mathbf i=(i_1 , \ldots , i_n) \in \Gamma^n$ be a $n$-tuple of elements of $\Gamma$, and let $\mathcal E_{\mathbf i}$ be the event that $J_j=i_j$ for all $1 \leq j \leq n$. On this event, $W_n$ has the same distribution as a linear combination of copies of the $Y^{(m)}_{\overline{x_i}}$, or more precisely their image via some deterministic linear operators depending on the fixed parameters $(r,s,\ell)$. In order to obtain the uniqueness result, the only terms that need to be computed are the ones displaying copies of th $Y^{(n)}_{\overline{x_i}}$, thus the next part of the proof is the precise description of $W_n$ under the events $\mathcal E_{\mathbf i}$ on which such random matrices of size $(n+1)$ appear.

We will use the linear operators $F_j:\mathbb R \longrightarrow  \mathbb M_n $ defined for indices $0 \leq j \leq n$, such that for any $a \in \mathbb R$, $F_j(a)$ is defined as the matrix:
\[
\begin{tikzpicture}[decoration=brace]\matrix (m) [matrix of math nodes,left delimiter=(,right delimiter={)}] {
		0&\ldots&0&a&&&\\
		\vdots&\ddots&\vdots&\vdots&&(0)&\\
		0&\ldots&0&a&&&\\
		a&\dots&a&0&a&\dots&a\\
		&&&a&&&\\
		&(0)&&\vdots&&(0)&\\
		&&&a&&&\\
	};
    \draw[decorate,transform canvas={xshift=-1.5em},thick] (m-3-1.south west) -- node[left=2pt] {$j$} (m-1-1.north west);
    \draw[decorate,transform canvas={yshift=0.5em},thick] (m-1-1.north west) -- node[above=2pt] {$j$} (m-1-3.north east);
\end{tikzpicture}.
\]
It will be mostly used to add distances outside the subset in which most of the points fall.

The distance between $n+1$ points inside a single subspace is computed in the conditional expression of $W_n$ when most of the points fall in the same subspace. In fact, in happens only on the events $\mathcal E_{\mathbf i}$ described as follows: for indexes $i \in \Gamma$ and $i' \in \Gamma_{i}$, there is a unique $1\leq k\leq n$ such that $i_k=i'$ and we have $i_j=i$ for all $j\neq k$. Indeed, it combines $4$ subcases briefly described as follows:
\begin{enumerate}
\item \label{uniquenesscas1} If $\mathbf i=(\emptyset , \ldots , \emptyset)$, then all the distances between the $\zeta_j$ points are all measured within $\overline{x_\emptyset}$. Conditionally on the event $\mathcal E_\mathbf i$, $W_n$ is distributed as $r_\emptyset^\alpha Y^{(n)}_{\overline{x_\emptyset}}$.
\item \label{uniquenesscas2} If for a $i \in \Gamma-\{\emptyset\}$ we have $\mathbf i=(i , \ldots , i )$, then all the distances between the points $\zeta_1 , \ldots , \zeta_n$ are measured within $\overline{x_i}$. For $1 \leq j \leq n$, the distance between $\zeta_j$ and $\zeta_0=\overline{\rho}$ can be decomposed into two parts: on the one hand the distance between the canonical projection of $\zeta_j$ $\varpi^\circ(\overline{\rho_i})$ inside $\overline{x_i}$, and on the other hand $\overline{d}(\overline{\rho},\varphi^\circ(\rho_i))$ which is equal to the sum of the distances inside all the crossed subspaces on the geodesic between $\zeta_0$ and $\varphi^\circ(\rho_i)$, which are all the $k \in \Gamma$ such that $i \in \Gamma_k$ except for $i$, plus the length $\ell$. Hence $W_n$ is distributed as:
\[
        r_i^\alpha Y^{(n)}_{\overline{x_i}} + F_0 \left( \sum\limits_{k\neq i:i \in \Gamma_k} r_{k}^\alpha \left(Y^{(1)}_{\overline{x_{k}}}\right)_{0,1} + \ell(\emptyset,i) \right)
\]
where the different random variables $Y$ are independent, and we point out that $\left(Y^{(1)}_{\overline{x_{k}}}\right)_{0,1}$ is the matrix coefficient corresponding to the distance inside $\overline{x_{k}}$ between $\overline{\rho_k}$ and $\eta_k$.

\item \label{uniquenesscas3} If for $i \in \Gamma-\{\emptyset\}$ and $1 \leq k \leq n$ we have $i_j=\emptyset$ for all $j \neq k$ and $i_k=i$. The distances $d(\zeta_j,\zeta_m)$ are all computed within $\overline{x_\emptyset}$ for $j,m \neq k$ (includind $0$). What remains is the distance between $\zeta_k$ and all the others $\zeta_j$ inside $\overline{x_\emptyset}$. This distance can be decomposed in the same way as in the case \ref{uniquenesscas2} into  the distance needed to go out of $\overline{x_\emptyset}$ via $\varphi^\circ(\eta_\emptyset)$, and all the distance between the images via $\varphi^\circ$ of $\eta_\emptyset$ and $\zeta_k$. Then the matrix $W_n$ is distributed as:
\[
r_\emptyset^\alpha Y^{(n)}_{\overline{x_\emptyset}} + F_k \left( \sum\limits_{p \neq 0: i \in \Gamma_p} r_{p}^\alpha Y^{(1)}_{\overline{x_{p}}} + \ell(\emptyset,i) \right)
\]
\item \label{uniquenesscas4} There is one last case in which a random variable $Y^{(n)}_{\overline{x_i}}$ appears in the description of the law of $W_n$. If there are two distinct elements $i,i'$ of $\Gamma-\{\emptyset\}$ such that $i' \in \Gamma_i$, there is an index $k \in \{1 , \ldots , n \}$ such that $J_k=i'$, and $J_j=i$ for all $j \neq k$. Conditionally on the event $\mathcal E_{\mathbf i}$, $W_n$ is distributed as:
\begin{multline*}
r_i^\alpha Y^{(n)}_{\overline{x_i}} + F_0 \left( \sum\limits_{p \neq i:i \in \Gamma_p} r_{p}^\alpha Y^{(1)}_{\overline{x_{p}}} + \ell(\emptyset,i) \right)\\
+F_j \left( \sum\limits_{p \neq i: p \in \Gamma_{i} , i' \in \Gamma_p } r_{p}^\alpha Y^{(1)}_{\overline{x_{p}}} + \ell(i,i') \right)
\end{multline*}
which is somehow a combination of cases \ref{uniquenesscas2} and \ref{uniquenesscas3}. $Y^{(n)}_{\overline{x_i}}$ appears because what is computed is the distance between the $n-1$ random points $\zeta_j$ falling into $\overline{x_i}$, the root of this subspace $\overline{\rho_i}$, and the branchpoint $\eta_i$ which plays the same role as if the last point $\zeta_k$ also fell in $\overline{x_i}$.
\end{enumerate}

In any other case than the one we described before, the conditional distribution of $W_n$ depends only on the branching parameters and on copies of the $Y_{\overline{x_i}}^{(k)}$ for some $k<n$. To be more precise, the maximal value of $k$ appearing in a $Y_{\overline{x_i}}^{(k)}$ is the maximum amount of random points falling inside the same $\overline{x_i}$, plus $1$ whenever at least one other point falls in $\Gamma_i-\{i\}$ (so that the branchpoint $\eta_i$ intervenes as in the same way as in cases \ref{uniquenesscas3} and \ref{uniquenesscas4}). Hence we described the only situations where $k=n$.

Note that the other conditional distributions of $W_n$ can be written deterministically in every case, however it will not be necessary to write them all for the remaining of the proof.

If we combine everything we considered previously, the random matrix $M_n$ is a solution of the following distributional equation:
\begin{equation} \label{perpetuity}
M_n \loi A_n M_n + B_n
\end{equation}
where
\begin{equation*}
A_n= \sum_{i \in \Gamma} \mathbf 1_{\big\{n-1 \: \zeta_l \mbox{ fall into } \overline{x_i} \mbox{ and the last one falls into } \overline{x_{i'}}, \: i' \in \Gamma_i \big\}} \mathcal R_i^\alpha,
\end{equation*}
which is in $[0,1]$ almost surely, and $B_n \in \mathbb M_n$. Moreover, $D_n, (A_n,B_n)$ are independent random variables.

We do not develop the expression of $B_n$ here, but refer to the expression of $V_n$ in the proof of Proposition 12 in \cite{broutin2016self}: the expression of our $B_n$ is a sum of images of independent copies of $M_m$ for $m<n$, independent of $(\mathcal R,\mathcal S,\mathcal L)$, by linear operators deterministic in the branching parameters $\mathcal R,\mathcal S$, to which are added at the right coordinates the branching lengths $\ell(i,j)$.

A random variable such as $M_n$ satisfying a fixed-point equation as Equation \eqref{perpetuity} is called a \emph{perpetuity}. If follows from classical results on perpetuities, for instance Theorem 1.5 in \cite{vervaat1979stochastic}, that this fixed-point equation has at most one solution in distribution. To conclude on the heredity, remark that $M'_n$ satisfies an equation $M'_n \loi A'_n M'_n + B'_n$ as Equation \eqref{perpetuity} with $A'_n=A_n$, and $B_n$ is replaced by $B'_n$ simply by changing every copy of $M_m$ for $m<n$ by a copy of $M'_m$ instead still independent from each other and from $(\mathcal R,\mathcal S,\mathcal L)$. By the induction hypothesis, for all $m<n$, $M_m$ and $M'_m$ have the same distribution. It follows that $M_n$ and $M'_n$ have the same distribution which concludes the proof.
\end{proof}

\subsection{Existence of a solution: an iterative construction}
\label{sec:exist}

In this section, we construct a solution to Equation \eqref{eqnppl}. To do so, we  iterate the application $\Psi$ starting from singletons. In order to prove the convergence of this sequence of random variables taking values in $\kgp$, we prove a convergence of the height of a random point as introduced in Section \ref{sectionminimalsolu}, followed by the study of the distance between two randomly chosen points.
Together, these convergences in $L^1$ entail the weak convergence of the distance matrix distribution. The last ingredient we need is the tightness of the sequence in order to apply Proposition \ref{critereCVmesures} $(ii)$.

Following \cite{greven2009convergence}, let us define on an isometry class of rooted measured metric space $\mathfrak X =(X,d,\mu,\rho)$ in $\kgp$, the \emph{distance distribution} and the \emph{modulus of mass distribution} respectively as:
\[ \begin{array}{rl}
w_{\mathfrak X} & := d_* \mu^{\otimes 2} \\
v_\delta (\mathfrak X) & := \inf \left\{ \epsilon>0 : \mu \left\{x \in X : \mu(B_\epsilon(x)) \leq \delta \right\} \leq \epsilon \right\}, \mbox{ for a } \delta>0.
\end{array}
\]
For a probability measure $\mathbf P$ on $\kgp$ we define in the same way as in Equation \eqref{nudelta} the annealed versions of both the \emph{distance distribution} $w_{\mathbf P}$ and the \emph{modulus of mass distribution} $v_\delta(\mathbf P)$.

To address tightness in $\kgp$, we will use the characterization given by \cite[Theorem 3]{greven2009convergence}: a sequence of probability measures $(\mathbf P_m)_{m \geq 1}$ in $\mathcal M_1 (\kgp)$ is tight if and only if the family $\left\{  w_{\mathbf P_m} : m \in \mathbb N \right\}$ is tight in $\mathcal M_1 \left( \mathbb R \right)$ and
\begin{equation}\label{tightnessgrev}
\text{ For all $\epsilon>0$, there exists a $\delta>0$ such that } \sup_{m \in \mathbb N} v_\delta(\mathbf P_m) <\epsilon.
\end{equation}

We will also use the notations $\mathbf E [w_{\mathcal X}]$ and $\mathbf E [v_\delta (\mathcal X)]$ when needed.

\paragraph{Our sequence of random variables} \label{sequencerandomspaces} We define $\mathcal X_m^\theta := \left( X_m^\theta, d_m^\theta, \mu_m^\theta, \rho_m^\theta \right)$ recursively as follows: let $\mathcal X_0^\theta = \{ \rho^\theta_0 \}$ for all $\theta$ (the initial spaces are singletons), and for $m \geq 1$, define
\begin{equation*}
\mathcal X_m^\theta := \psi \left( \left( \mathcal X^{\theta i}_{m-1} \right)_{i \in \Gamma}, \mathcal R^{(\theta)}, \mathcal S^{(\theta)} , \mathcal L^{(\theta)} \right).
\end{equation*}

$\mathcal X_m^\theta$ is a random variable in $\kgp$. We denote by $\mathbf P_m^\theta$ its distribution. The random measure $\mu_m^\theta$ on $X_m^\theta$ gives weight $S^{(\theta)}_\varphi$ to each $\rho_0^{\theta \varphi}$ (or rather its image via the successive canonical projections) for any $\varphi \in \Theta_m$. We do not explicitely write the distance $d_m^\theta$ between any two points, but remark that $D_m^\theta$ which recurrence is described in Equation \eqref{eqdm} is exactly the distance between two random points independently chosen both wih distribution $\mu_m^\theta$. Similarly $Y_m^\theta$ is the distance between $\rho_m^\theta$ and a random point on $\mathcal X_m^\theta$ wit distribution $\mu_m^\theta$.

At a fixed $\theta$, $(\mathcal X^\theta_m)_{m \in \mathbb N}$ is a sequence of random compact rooted measured metric spaces.

This section will focus on the proof of the following proposition which proves immediately the existence in Theorem \ref{thmppl}:
\begin{lpro} \label{lalimite}
For any $\theta \in \Theta$, the sequence $( \mathcal X_m^\theta)_m$ converges in distribution in $\kgp$ to a random variable $\mathcal X^\theta$. For any $\theta \in \Theta$, we have:
\begin{equation} \label{eqntheta}
\mathcal X^\theta \loi \psi \left( \left( \mathcal X^{\theta i}\right)_{i \in \Gamma} , \mathcal R^\theta , \mathcal S^\theta , \mathcal L^\theta \right).
\end{equation}
\end{lpro}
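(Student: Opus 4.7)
The strategy is to invoke Proposition \ref{critereCVmesures}(ii): to establish weak convergence of the laws $\mathbf P_m^\theta$ of $\mathcal X_m^\theta$, it suffices to check that (a) the distance matrix distributions $\nu^{\mathbf P_m^\theta}$ converge weakly, and (b) the family $\{\mathbf P_m^\theta\}_{m \geq 0}$ is tight in $\mathcal M_1(\kgp)$. Granted (a) and (b), one defines $\mathcal X^\theta$ as any random variable having the limit distribution. Equation \eqref{eqntheta} then follows by passing to the limit in the definitional recursion $\mathcal X_m^\theta = \psi\bigl((\mathcal X^{\theta i}_{m-1})_{i \in \Gamma}, \mathcal R^{(\theta)}, \mathcal S^{(\theta)}, \mathcal L^{(\theta)}\bigr)$: the inputs $(\mathcal X^{\theta i}_{m-1})_i$ are jointly independent and independent of the branching parameters $(\mathcal R^{(\theta)}, \mathcal S^{(\theta)}, \mathcal L^{(\theta)})$, each $\mathcal X^{\theta i}_{m-1}$ converges in distribution to $\mathcal X^{\theta i}$, and $\psi$ is continuous in its arguments (Lemma \ref{lem:mes}).

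For (a), I extend the $L^1$-convergence arguments that gave Propositions \ref{uniquenessYL1} and \ref{limiteD} to the full distance matrix. Fix $n \geq 1$, sample $n$ iid points from $\mu_m^\theta$ together with the root, and let $M_n^{(m,\theta)}$ denote the resulting $(n+1) \times (n+1)$ random matrix. The case analysis in the proof of Proposition \ref{propuniq} shows that each entry can be written as a monotone sum over paths in the randomness tree $\Theta$ up to generation $m$, of products of rescaling factors $\mathcal R_\varphi^\alpha$ with either branching lengths $\mathcal L^{(\varphi)}(\cdot,\cdot)$ or \emph{heights}, structurally identical to the sums that defined $Y_m^\theta$ and $D_m^\theta$. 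Monotone convergence and Hypothesis \eqref{condsufeqnY} with $p=1$ then yield convergence of each entry almost surely and in $L^1$; hence the distribution of $M_n^{(m,\theta)}$ converges weakly for every $n$, which is precisely the weak convergence of the distance matrix distributions.

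For (b), I apply the characterization \cite[Theorem 3]{greven2009convergence} recalled in \eqref{tightnessgrev}: tightness is equivalent to tightness of the annealed distance distributions $w_{\mathbf P_m^\theta}$ together with $\sup_m v_\delta(\mathbf P_m^\theta) \to 0$ as $\delta \to 0$. The first holds because $w_{\mathbf P_m^\theta}$ is the law of $D_m^\theta$, which is uniformly bounded in $L^1$ by Proposition \ref{limiteD}. For the second, I exploit the self-similar structure: conditionally on the branching parameters at level $\theta$, a random point $\zeta \sim \mu_m^\theta$ lies in a single rescaled copy $\mathcal X^{\theta J}_{m-1}$ (with $\mathbb P(J = i \mid \cdot) = \mathcal S_i^{(\theta)}$), and the ball $B_\epsilon(\zeta)$ in $\mathcal X_m^\theta$ contains the canonical projection of $B_{\epsilon/(\mathcal R_J^{(\theta)})^\alpha}(\zeta')$ inside $\mathcal X^{\theta J}_{m-1}$, which carries $\mu_m^\theta$-mass equal to $\mathcal S_J^{(\theta)}$ times its $\mu^{\theta J}_{m-1}$-mass. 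An induction comparing $v_\delta(\mathcal X_m^\theta)$ to $v_{\delta/\mathcal S_J^{(\theta)}}(\mathcal X^{\theta J}_{m-1})$ at radius $\epsilon/(\mathcal R_J^{(\theta)})^\alpha$, combined with Hypothesis \eqref{condsufeqnY} for $p=1$, yields the required uniform bound.

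The main obstacle is precisely this last tightness step: the atoms of $\mu_m^\theta$ have weights $\mathcal S_\varphi^{(\theta)} = \prod_{k=1}^m \mathcal S_{i_k}^{(i_1 \ldots i_{k-1})}$ that decay geometrically in $m$ and can be arbitrarily small, so one must carefully leverage the metric clustering of deep atoms -- again governed by the recursion and the contraction Hypothesis \eqref{condsufeqnY} -- to ensure that balls of moderate radius capture mass at least $\delta$ with high probability, uniformly in $m$. Convergence (a) in contrast is essentially routine once Propositions \ref{uniquenessYL1} and \ref{limiteD} are in hand.
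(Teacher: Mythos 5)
Your overall structure is correct and matches the paper: invoke Proposition \ref{critereCVmesures}(ii), establish weak convergence of the distance matrix distributions via the $L^1$ convergences from Propositions \ref{uniquenessYL1} and \ref{limiteD}, and handle tightness via \cite[Theorem 3]{greven2009convergence}, with the first tightness condition following directly from the $L^1$ bound on $D_m^\theta$. Part (a) and the distance-distribution half of part (b) are fine.

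There is, however, a genuine gap in your treatment of the modulus of mass distribution, and it is exactly where you yourself flag the ``main obstacle.'' The one-step recursion you propose, comparing $v_\delta(\mathcal X_m^\theta)$ to $v_{\delta/\mathcal S_J^{(\theta)}}(\mathcal X_{m-1}^{\theta J})$ at radius $\epsilon/(\mathcal R_J^{(\theta)})^\alpha$, does not close: both the mass threshold $\delta/\mathcal S_J^{(\theta)}$ and the radius $\epsilon/(\mathcal R_J^{(\theta)})^\alpha$ inflate at each step, and once $\delta/\mathcal S_J^{(\theta)} \geq 1$ the modulus of mass gives no information. Hypothesis \eqref{condsufeqnY} with $p=1$ alone cannot rescue this, since it controls sums of $\mathcal R_i^\alpha$ but says nothing about the distribution of a single $\mathcal S_\varphi$ becoming small along a tagged branch. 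The paper resolves this by jumping down a fixed depth $M$ (not one level at a time), decomposing $v_\delta(\mathcal X_m)$ into a ``far from the roots $\rho_{m-M}^\varphi$'' contribution $B$ (killed by the contraction $\varrho^M$ and a Markov bound on $Y_{m-M}^\varphi$) and a ``near the roots'' contribution $A$. For $A$, the key observation is that $B_\epsilon(x) \supset B_{\epsilon/2}(\rho_{m-M}^\varphi)$ once $x$ is within $\epsilon/2$ of the root of its level-$M$ block, so the only way the ball can have small $\mu_m$-mass is for $\mathcal S_\varphi$ itself to be small; this is then controlled by identifying $\sum_{\varphi \in \Theta_M} \mathcal S_\varphi \mathbf 1_{\mathcal S_\varphi \leq 4\delta/\epsilon}$ with $\mathbb P^*(\bar{\mathcal S}_M \leq 4\delta/\epsilon)$ for the randomly tagged branch and invoking Bertoin's Lemma 1.4 and Proposition 1.6 from \cite{MR2253162} on self-similar fragmentations. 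That tagged-fragment argument is the missing ingredient in your proposal; without it, ``an induction yields the required uniform bound'' is an assertion, not a proof.

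A smaller point: your derivation of \eqref{eqntheta} by ``passing to the limit using continuity of $\psi$'' is slightly more delicate than stated, since $\psi$ in Lemma \ref{lem:mes} is only established to be measurable and the argument in the appendix approximates by finite truncations $\psi^{(I)}$; but given the distance-matrix convergence in $L^1$ already proved, identifying the limit distribution as a fixed point of the one-step recursion is straightforward, so this is not a serious issue.
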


We split the proof of Proposition \ref{lalimite} into the two following lemmata, the first one states the weak convergence of the random distance matrices of the sequence of random metric spaces considered, and the second one deals with the tightness of the sequence of probability distributions. Together they imply Proposition \ref{lalimite} by applying Proposition \ref{critereCVmesures}.

\begin{llem}
The sequence $(\mathbf E_m^\theta [\nu^{\mathcal X^\theta_m}])_{m \geq 0}$ converges weakly.
\end{llem}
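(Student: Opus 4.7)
The plan is to reduce convergence on $\mathbb M_{\mathbb N}$ to finite-dimensional convergence and then induct on $n$. Since $\mathbb M_{\mathbb N}$ carries the product topology, Proposition \ref{critereCVmesures} (more precisely, the fact that weak convergence on $\mathbb M_{\mathbb N}$ amounts to the convergence of all finite-dimensional projections) reduces the claim to showing, for each $n \geq 0$, the weak convergence on $\mathbb M_n$ of a random matrix $M_n^{(m,\theta)}$ with distribution $\mathbf E_m^\theta[\nu_n^{\mathcal X_m^\theta}]$. Note that by exchangeability of the $\theta$-indexed families, this law does not in fact depend on $\theta$, so we abbreviate $M_n^{(m)}$.

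The base cases are immediate: for $n=0$ the matrix is identically zero, and for $n=1$ the only nontrivial coefficient has the law of $Y_m^\theta$, which converges in $L^1$, hence weakly, to $Y^\theta$ by Proposition \ref{uniquenessYL1}. For the inductive step, I would replay the decomposition used in the proof of Proposition \ref{propuniq}, applied this time to the iterated construction $\mathcal X_m^\theta = \psi((\mathcal X_{m-1}^{\theta i})_{i \in \Gamma}, \mathcal R^{(\theta)}, \mathcal S^{(\theta)}, \mathcal L^{(\theta)})$. Conditioning on the indices $(J_1, \ldots, J_n)$ of the subspaces $\mathcal X_{m-1}^{\theta J_j}$ into which the sampled points fall, one obtains
\begin{equation*}
M_n^{(m)} \loi A_n\, M_n^{(m-1)} + B_n^{(m-1)},
\end{equation*}
where $A_n \in [0,1]$ corresponds exactly to the cases (1)--(4) of Proposition \ref{propuniq} in which $n-1$ of the sampled points fall into a single subspace (so that an $(n+1)$-by-$(n+1)$ matrix of the previous level genuinely reappears), and where $B_n^{(m-1)}$ is a measurable function of the branching parameters $(\mathcal R^{(\theta)}, \mathcal S^{(\theta)}, \mathcal L^{(\theta)})$, of matrices $M_k^{(m-1)}$ for $k < n$, and of the $Y_{m-1}^{(\theta i)}$, all mutually independent from $M_n^{(m-1)}$ on the right-hand side.

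By the induction hypothesis each $M_k^{(m-1)}$ with $k<n$ converges weakly, and by Proposition \ref{uniquenessYL1} each $Y_{m-1}^{(\theta i)}$ converges weakly, so the continuous mapping theorem gives $B_n^{(m-1)} \Rightarrow B_n$ for some limit random matrix $B_n$. Under Hypothesis \eqref{condsufeqnY} with $p=1$ we have $\mathbb E[A_n] \leq \mathbb E[\sum_{i} \mathbf 1_{J \in \Gamma_i} \mathcal R_i^\alpha] < 1$, so a standard perpetuity argument (Theorem 1.5 of \cite{vervaat1979stochastic}) yields that $M_n^{(m)}$ converges in distribution to the unique fixed point of $M \loi A_n M + B_n$. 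The principal obstacle is the time-inhomogeneous nature of the recursion, since $B_n^{(m-1)}$ depends on $m$; the contraction $\mathbb E[A_n]<1$ nonetheless allows one to unroll the recursion and control the tail $\prod_{j<N} A_n^{(j)} \, M_n^{(m-N)}$ uniformly in $m$ via the $L^1$ bounds already established for $Y_m^\theta$ and $\tilde D_m^\theta$, showing that only the limiting law of $B_n^{(m-1)}$ matters in the limit.
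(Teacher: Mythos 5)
Your proof is correct but takes a genuinely different route from the paper. The paper disposes of this lemma in two sentences by exploiting the coupling built in Section \ref{drootunif}: the quantities $Y_m^\theta$ and $D_m^\theta$ constructed there are realized on a common probability space where they increase monotonically in $m$ and converge in $L^1$ (Propositions \ref{uniquenessYL1} and \ref{limiteD}), and since every entry of the $(n+1)\times(n+1)$ distance matrix of $\mathcal X_m^\theta$ is, in that coupling, a monotone-increasing random variable dominated by an integrable limit, the whole matrix converges a.s.\ and in $L^1$, hence in distribution. Your argument instead works at the level of distributions: you reduce to finite-dimensional marginals, induct on $n$, and set up a time-inhomogeneous perpetuity $M_n^{(m)} \loi A_n M_n^{(m-1)} + B_n^{(m-1)}$ by replaying cases (1)--(4) of Proposition \ref{propuniq}. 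This is in fact closer to the argument the paper makes later (Lemma \ref{lemiteres} and Theorem \ref{thmcviteres}) for the convergence of $\phi^n(\delta)$ for \emph{general} starting distributions $\delta$, where the monotone coupling is unavailable. Your approach has the merit of being purely distributional and of unifying this lemma with Theorem \ref{thmcviteres}; the paper's approach has the merit of brevity, since it recycles the coupled $L^1$ bounds already in hand. The one spot you should flesh out is the inhomogeneity you flag yourself: unrolling $N$ steps leaves you with $\big(\prod_{k<N}A_n^{(k)}\big)M_n^{(m-N)} + \sum_{j<N}\big(\prod_{k<j}A_n^{(k)}\big)B_n^{(m-j,j)}$, and you must argue both that the first term is uniformly small in $L^1$ (this is where the $L^1$ boundedness of $Y_m^\theta,\tilde D_m^\theta$ enters) and that, for each fixed $j$, $B_n^{(m-j,j)}$ converges jointly (not just marginally) with the surrounding $A$-weights; the joint convergence holds because the constituents are independent, but it deserves to be stated. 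With that made explicit, the argument is complete.
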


\begin{proof}
The convergences of $(D_m^\theta)$ and $(Y_m^\theta)$ both hold in $L^1$ from Proposition \ref{uniquenessYL1} and Proposition \ref{limiteD}. Hence the joint convergence of distance matrices of any finite size are also true in $L^1$, and the weak convergence of the distance matrices follows.
\end{proof}

From now on, we consider the case $\theta=\emptyset$ for more clarity in the expressions.

\begin{llem} \label{lemtight}
The family of distributions $\left( \mathbf P_m^\theta\right)_{m \geq 0}$ is tight in the set of probability distributions on $\kgp$.
\end{llem}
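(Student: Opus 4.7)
The plan is to verify the two conditions of the tightness characterization recalled in \eqref{tightnessgrev}. Tightness of the annealed distance distributions $\{w_{\mathbf{P}_m^\theta}\}_m$ in $\mathcal{M}_1(\mathbb{R})$ is immediate: under $\mathbb{E}[w_{\mathcal{X}_m^\theta}]$, a standard random variable is the distance $D_m^\theta$ between two independent $\mu_m^\theta$-points, which converges in $L^1$ by Proposition \ref{limiteD} and is therefore tight in $\mathcal{M}_1(\mathbb{R}_+)$. The nontrivial condition is uniform smallness of $v_\delta(\mathbf{P}_m^\theta)$ as $\delta \to 0$.

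For the modulus of mass distribution, I would work in the coupled construction of Section \ref{sectioncoupling}. For each prefix $\psi = \varphi^{(k)}$ of a random index $\varphi = I_1\cdots I_m$, denote by $\mathcal{U}_\psi \subset \mathcal{X}_m^\theta$ the image of $\mathcal{X}_{m-k}^{\theta\psi}$ under the iterated canonical projections; it has $\mu_m^\theta$-mass exactly $\mathcal{S}_\psi^{(\theta)}$, and distances within the image are bounded by $\mathcal{R}_\psi^{(\theta)\alpha}$ times the corresponding distances in $\mathcal{X}_{m-k}^{\theta\psi}$. This yields the pointwise lower bound, for a random point $\zeta = \rho_0^{\theta\varphi} \sim \mu_m^\theta$ and any $k \leq m$,
\begin{equation*}
\mu_m^\theta(B_\varepsilon(\zeta)) \geq \mathcal{S}_{\varphi^{(k)}}^{(\theta)} \cdot \mu_{m-k}^{\theta\varphi^{(k)}}\bigl(B_{\varepsilon/\mathcal{R}_{\varphi^{(k)}}^{(\theta)\alpha}}(\tilde\zeta)\bigr),
\end{equation*}
where $\tilde\zeta$ is the preimage of $\zeta$ in $\mathcal{X}_{m-k}^{\theta\varphi^{(k)}}$, itself a $\mu_{m-k}^{\theta\varphi^{(k)}}$-random point. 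By Markov's inequality combined with the uniform $L^1$-bound on $D_{m-k}^{\theta\varphi^{(k)}}$ from Proposition \ref{limiteD}, the expected mass of the complement of the inner ball is at most $C\,\mathcal{R}_{\varphi^{(k)}}^{(\theta)\alpha}/\varepsilon$ for a uniform constant $C$.

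The core of the argument is then a stopping-level choice of $k$. Under Hypothesis \eqref{condsufeqnY} with $p = 1$, one has $\mathbb{E}[\mathcal{R}_J^\alpha] \leq \mathbb{E}[\sum_i \mathbf{1}_{J \in \Gamma_i} \mathcal{R}_i^\alpha] < 1$ where $J \sim \mathcal{S}$, so by independence over levels in the tree $\Theta$, the cumulative rescaling $\mathcal{R}_{\varphi^{(k)}}^{(\theta)\alpha}$ satisfies $\mathbb{E}[\mathcal{R}_{\varphi^{(k)}}^{(\theta)\alpha}] \leq c^k$ with $c < 1$ independently of $m$. Therefore the stopping level $K = K(\varepsilon) := \inf\{k : \mathcal{R}_{\varphi^{(k)}}^{(\theta)\alpha} \leq \varepsilon/(4C)\}$ has a tail bound $\mathbb{P}(K > N) \leq 4C c^N/\varepsilon$ uniform in $m$. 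At this level the Markov bound above gives, with high conditional probability, $\mu_m^\theta(B_\varepsilon(\zeta)) \geq \mathcal{S}_{\varphi^{(K)}}^{(\theta)}/2$. Choosing $\delta$ small enough so that $\mathcal{S}_{\varphi^{(K)}}^{(\theta)} \geq 2\delta$ with probability at least $1 - \varepsilon/2$ uniformly in $m$ gives $\mathbb{E}[v_\delta(\mathcal{X}_m^\theta)] \leq \varepsilon$, which is the required bound.

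The main obstacle is the last uniform lower bound on $\mathcal{S}_{\varphi^{(K)}}^{(\theta)}$: although $K$ is almost surely finite with tail controlled uniformly in $m$, the mass fraction $\mathcal{S}_{\varphi^{(K)}}^{(\theta)}$ is a product of $K$ random factors in $(0,1]$ and could in principle be very small, since $\mathcal{S}$ and $\mathcal{R}$ are not related by an a priori deterministic bound. Turning this into a quantitative estimate requires a simultaneous Cramér-type control of the branching random walks $\log \mathcal{R}_{\varphi^{(k)}}^{(\theta)\alpha}$ and $\log \mathcal{S}_{\varphi^{(k)}}^{(\theta)}$ along the $\mu$-sampled path, extracting from the $L^1$ contraction in Hypothesis \eqref{condsufeqnY} an almost-sure geometric relation between them; this appears to be the technical heart of the proof.
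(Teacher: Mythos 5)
Your overall strategy --- invoking the Greven--Pfaffelhuber--Winter characterization, dispatching the distance-distribution condition via the $L^1$ bound on $D_m^\theta$ from Proposition~\ref{limiteD}, and then cutting the coupled construction at some level along a $\mu$-sampled path to control $v_\delta$ --- agrees with the paper up to the point where you introduce the random stopping level $K=K(\varepsilon)$. This is where the two proofs genuinely diverge, and you have correctly identified that the diverging choice leaves a gap you cannot close: controlling $\mathcal S_{\varphi^{(K)}}^{(\theta)}$ from below at a level that is itself defined in terms of $\mathcal R$ requires a joint large-deviation argument relating the two branching random walks $\log\mathcal R_{\varphi^{(k)}}$ and $\log\mathcal S_{\varphi^{(k)}}$, for which the hypotheses supply no leverage.

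The paper sidesteps this by cutting at a \emph{deterministic} level $M$ and decomposing $v_\delta(\mathcal X_m)=A+B$, where $B$ is the $\mu_m$-mass of points at distance $>\varepsilon/2$ from their subspace root $\rho_{m-M}^\varphi$ and $A$ the remaining mass. For $B$, a Markov bound and the factorization $\mathbf E_m\bigl[\sum_{\varphi\in\Theta_M}\mathcal S_\varphi\mathcal R_\varphi^\alpha\bigr]\le\varrho^M$ with $\varrho<1$ (the same expectation you computed) let one fix $M$ once and for all so that $\mathbf E_m[B]<\varepsilon/2$ uniformly in $m$. For $A$, the point is that if $x\in B_{\varepsilon/2}(\rho_{m-M}^\varphi)$ then $B_{\varepsilon/2}(\rho_{m-M}^\varphi)\subset B_\varepsilon(x)$, so $\mu_m(B_\varepsilon(x))\ge\mathcal S_\varphi\,\mu_{m-M}^\varphi\bigl(B_{\varepsilon/2}(\rho_{m-M}^\varphi)\bigr)$; splitting further on whether the inner ball mass exceeds $\varepsilon/4$ reduces the problem to bounding $\mathbf E_m\bigl[\sum_{\varphi\in\Theta_M}\mathcal S_\varphi\mathbf 1_{\mathcal S_\varphi\le 4\delta/\varepsilon}\bigr]$. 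By the many-to-one identity from Bertoin's fragmentation theory this equals $\mathbb P^*\bigl(\bar{\mathcal S}_M\le 4\delta/\varepsilon\bigr)$, the law of the tagged fragment's size at the \emph{fixed} level $M$; since $\bar{\mathcal S}_M$ is a product of finitely many a.s.\ positive size-biased factors, $-\log\bar{\mathcal S}_M$ is a.s.\ finite and a quantile choice of $\delta$ finishes the proof. In short: the deterministic cut level converts your unresolved two-walk coupling problem into a one-dimensional quantile estimate on a fixed-length tagged path, which is exactly the control you were missing. (One could try to patch your approach by conditioning on $\{K\le N\}$ for a deterministic $N$ and using $\mathcal S_{\varphi^{(K)}}\ge\mathcal S_{\varphi^{(N)}}$, but that is essentially re-deriving the paper's fixed-level scheme.)
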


\begin{proof}
As we said, we use the characterization from \cite[Theorem 3]{greven2009convergence}. The first part of the criterion is quickly obtained, since $w_{\mathbf P_m}$ is the distribution of $D_m$. We have shown that this sequence is bounded in $L^1$ in Proposition \ref{limiteD}, hence the tightness.

For the second part of the criterion recalled in Equation \eqref{tightnessgrev}, we have to show that for all $\epsilon>0$ there exists a $\delta>0$ such that
\begin{equation}
\limsup_{m \in \mathbb N} \mathbf E _m \left[ \mu_m \left\{ x \in \mathcal X_m \; : \; \mu_m(B_\epsilon(x)) \leq \delta \right\} \right] \leq \epsilon,
\end{equation}
as discussed in \cite[Remark 3.2]{greven2009convergence}.

Let $\epsilon>0$ be fixed, and take $m \in \mathbb N$. We introduce another index $M \leq m$ and separate the space $X_m$ at that level. We omit to specify the canonical projection $\varphi^\circ$ at each level of the recursion to simplify the notations.
\begin{equation*}
\begin{split}
v_\delta(\mathcal X_m)& = \mu_m \left\{ x \in X_m \; : \; \mu_m(B_\epsilon(x))\leq \delta \right\} \\
&= \mu_m \left\{ x \in \! \bigsqcup_{\varphi \in \Theta_M} X_{m-M}^\varphi \; : \; \mu_m(B_\epsilon(x))\leq \delta \right\}.
\end{split}
\end{equation*}
In the last expression, the union is disjoint. On every subspace $X_{m-M}^\varphi$, we decompose one more time according to the distance to the points $\rho_{m-M}^\varphi$ being greater or lesser than a constant:
\begin{multline*}
v_\delta(\mathcal X_m)=\mu_m \left\{ x \in \bigsqcup_{\varphi \in \Theta_M} \left( X_{m-M}^\varphi\cap B_{\epsilon/2}(\rho_{m-M}^\varphi) \right) \; : \; \mu_m(B_\epsilon(x))\leq \delta \right\} \\ + \mu_m \left\{ x \in \bigsqcup_{\varphi \in \Theta_M} \left( X_{m-M}^\varphi \backslash B_{\epsilon/2}(\rho_{m-M}^\varphi) \right) \; : \; \mu_m(B_\epsilon(x))\leq \delta \right\},
\end{multline*}
and condense as $v_\delta(\mathcal X_m)= A+B$.

Note that for $\eta>0$, $B_\eta(y)$ is the ball of radius $\eta$ around $y$, and it will always be considered in the space $\mathcal X_m$ endowed with its distance $d_m$.

\paragraph{Bound on $\mathbf E_m [B]$} Here the upper bound comes from the fact that after enough iterations, the distances we add are lesser than any constant with high probability. Hence all the mass stays relatively close to the point $\rho_{m-M}^\varphi$ in the corresponding subspace.
\begin{equation*}
\begin{split}
B &=\mu_m \left\{ x \in \bigsqcup_{\varphi \in \Theta_M} \left( X_{m-M}^\varphi\backslash B_{\epsilon/2}(\rho_{m-M}^\varphi) \right) \; : \; \mu_m(B_\epsilon(x))\leq \delta \right\} \\
&= \sum_{\varphi \in \Theta_M} \mathcal S_\varphi \mu_{m-M}^\varphi \left\{ x \in  \left( X_{m-M}^\varphi\backslash B_{\epsilon/2}(\rho_{m-M}^\varphi) \right) \; : \; \mu_m(B_\epsilon(x))\leq \delta \right\}.
\end{split}
\end{equation*}
On a fixed $\varphi \in \Theta_M$, we have almost surely
\begin{equation*}
\begin{split}
&\mu_{m-M}^\varphi \left\{ x \in  \left( X_{m-M}^\varphi\backslash B_{\epsilon/2}(\rho_{m-M}^\varphi) \right) \; : \; \mu_m(B_\epsilon(x))\leq \delta \right\} \\
&\leq \mu_{m-M}^\varphi \left\{ X_{m-M}^\varphi\backslash B_{\epsilon/2}(\rho_{m-M}^\varphi)  \right\} \\
&= \mu_{m-M}^\varphi \left\{x \in X_{m-M}^\varphi\; : \; d_m(x,\rho_{m-M}^\varphi)>\epsilon/2  \right\}.
\end{split}
\end{equation*}

Restricted to $X_{m-M}^\varphi$, $d_m= (\mathcal R_\varphi)^\alpha d_{m-M}^\varphi$ where $\mathcal R_\varphi$ is a random variable almost surely in $(0,1)$. We first compute the conditional expectation with respect to the random variables $(\mathcal R^{(\theta)} , \mathcal S ^{(\theta)}, \mathcal L ^{(\theta)})_{\vert \theta \vert < M}$. We denote by $\mathcal F_M$ the $\sigma$-algebra generated by this family of random variables. Since for any $\varphi \in \Theta_M$, $(\mathcal S_\varphi,\mathcal R_\varphi)$ is measurable with respect to $\mathcal F_M$,
\begin{multline*}
\mathbf E_m \left[ B \vert \mathcal F_M \right] \\ \leq \sum_{\varphi \in \Theta_M} \mathcal S_\varphi \mathbf E_m \left[ \mu_{m-M}^\varphi \left\{ x \in  X_{m-M}^\varphi \; : \; (\mathcal R_\varphi)^\alpha d_{m-M}^\varphi(x,\rho_{m-M}^\varphi) > \epsilon/2 \right\} \vert \mathcal F_M \right].
\end{multline*}

For a fixed $\varphi \in \Theta_M$, note that
\begin{multline*}
\mathbf E_m \left[\mu_{m-M}^\varphi \left\{ x \in  X_{m-M}^\varphi \; : \; (\mathcal R_\varphi)^\alpha d_{m-M}^\varphi(x,\rho_{m-M}^\varphi) > \epsilon/2 \right\} \vert \mathcal F_M \right] \\
= \mathbf P_m \left( Y_{m-M}^\varphi > \frac{\epsilon}{2 \left( \mathcal R_\varphi \right)^\alpha} \vert \mathcal F_M \right),
\end{multline*}
which we bound using Markov's inequality by
\begin{equation*}
\mathbf E_m \left[ Y_{m-M}^\varphi \right] \frac{2 \left( \mathcal R_\varphi \right)^\alpha}{\epsilon},
\end{equation*}
since $Y^\varphi_{m-M}$ is independent from $\mathcal F_M$. Now we compute the total expectation of $B$
\begin{equation*}
\mathbf E_m \left[ B \right] \leq \sum\limits_{\varphi \in \Theta_M} \frac{2}{\epsilon} \mathbf E_m \left[Y_{m-M}^\varphi \right] \mathbf E_m \left[ \mathcal S_\varphi ( \mathcal R_\varphi)^\alpha \right]
\end{equation*}

By the construction of the sequences of $(Y_m^\theta)_m$ for all $\theta \in \Theta$ in Section \ref{sectionminimalsolu}, we have $Y_{m-M}^\varphi \leq Y^\varphi$, and the latter random variable is an independent copy of $Y^\emptyset$. Thus $\mathbf E_m \left[ Y_{m-M}^\varphi \right] \leq \mathbb E \left[ Y \right] < +\infty$ since $Y$ is $L^1$. As a consequence,
\begin{equation}
\label{boundBlast}
\mathbf E_m \left[ B \right] \leq \frac{2 \mathbb E \left[ Y \right]}{\epsilon}\mathbf E_m \left[ \sum\limits_{\varphi \in \Theta_M} \mathcal S_\varphi (\mathcal R_\varphi)^\alpha \right].
\end{equation}

Remember Hypothesis \eqref{condsufeqnY} with $p=1$ that $\varrho:= \mathbb E \left[ \sum_{i \in \Gamma} \mathbf 1_{J \in \Gamma_i} \mathcal R_i^\alpha \right]<1$, where $J$ is a random variable on $\Gamma$ such that for all $k \in \Gamma$, $\mathbb P \left(J=k \vert \mathcal R,\mathcal S , \mathcal L \right)=\mathcal S_k$. We have the immediate bound
\begin{equation*}
\mathbb E \left[ \sum\limits_{i \in \Gamma} \mathcal S_i (\mathcal R_i)^\alpha \right] \leq \mathbb E \left[ \sum\limits_{i \in \Gamma} \left(\sum_{j \in \Gamma_i} \mathcal S_j \right) (\mathcal R_i)^\alpha \right] = \mathbb E \left[ \sum\limits_{i \in \Gamma} \mathbf 1_{J \in \Gamma_i} (\mathcal R_i)^\alpha \right]=\varrho,
\end{equation*}
and for any $m \in \mathbb N$ and $\varphi \in \Theta$ such that $\vert \varphi \vert \leq m$, note that because the $\left(\mathcal S^{(\theta)},\mathcal R^{(\theta)} \right)_\theta$ are iid copies of $\left(\mathcal S,\mathcal R\right)$
\begin{equation*}
\mathbf E_m \left[ \sum\limits_{i \in \Gamma} \mathcal S_i^{(\varphi)} (\mathcal R_i^{(\varphi)})^\alpha \right] = \mathbb E \left[ \sum\limits_{i \in \Gamma} \mathcal S_i (\mathcal R_i)^\alpha \right].
\end{equation*}
Hence by induction
\begin{equation}\label{rhom}
 \mathbf E_m \left[ \sum\limits_{\varphi \in \Theta_M} \mathcal S_\varphi ( \mathcal R_\varphi)^\alpha \right] \leq \varrho^M.
\end{equation}

The bound on $B$ is obtained if we fix $M$ large enough such that $\frac{2 \mathbb E \left[ Y \right]}{\epsilon} \varrho^M<\frac{\epsilon}{2}$.

\paragraph{Bound on $\mathbf E_m[A]$} $M$ is fixed from the upper bound on $\mathbf E_m[B]$. The remaining term to bound (in mean) is:
\begin{equation*}
A=\mu_m \left\{ x \in \bigsqcup_{\varphi \in \Theta_M} \left( X_{m-M}^\varphi \cap B_{\epsilon/2}(\rho_{m-M}^\varphi) \right) \; : \; \mu_m \left( B_\epsilon (x) \right) \leq \delta \right\}
\end{equation*}
for some $\delta>0$ uniformly in $m\geq M$.

Let us first bound $A$ almost surely by a simpler expression. Since $B_{\epsilon/2}(\rho_{m-M}^\varphi) \subset B_{\epsilon}(x)$ for any $x \in X_{m-M}^\varphi \cap B_{\epsilon/2}(\rho_{m-M}^\varphi)$, we have the first almost sure bound
\begin{equation*}
\begin{split}
A &\leq \mu_m \left\{ x \in \bigsqcup_{\varphi \in \Theta_M} \left( X_{m-M}^\varphi \cap B_{\epsilon/2}(\rho_{m-M}^\varphi) \right) \; : \; \mu_m \left( B_{\epsilon/2}(\rho_{m-M}^\varphi) \right) \leq \delta \right\} \\
 &= \sum_{\varphi \in \Theta_M} \mathcal S_\varphi \mu_{m-M}^\varphi \left\{x \in B_{\epsilon/2}(\rho_{m-M}^\varphi) \; : \; \mu_m(B_{\epsilon/2}(\rho_{m-M}^\varphi)) \leq \delta \right\} \\
&\leq \sum_{\varphi \in \Theta_M} \mathcal S_\varphi \mu_{m-M}^\varphi \left\{x \in B_{\epsilon/2}(\rho_{m-M}^\varphi) \; : \; \mathcal S_\varphi \mu_{m-M}^\varphi(B_{\epsilon/2}(\rho_{m-M}^\varphi)) \leq \delta \right\} \\
&= \sum_{\varphi \in \Theta_M} \mathcal S_\varphi \mu_{m-M}^\varphi\left( B_{\epsilon/2}(\rho_{m-M}^\varphi) \right) \mathbf 1_{ \mathcal S_\varphi \mu_{m-M}^\varphi(B_{\epsilon/2}(\rho_{m-M}^\varphi)) \leq \delta}.
\end{split}
\end{equation*}

Now when we look at the expectation, and split each term of the sum according to the size of $\mu_{m-M}^\varphi(B_{\epsilon/2}(\rho_{m-M}^\varphi))$ being larger or smaller than $\varepsilon/4$ to bound $\mathbf E_m\left[A \right]$ by
\begin{multline*}\mathbf E_m \left[ \sum_{\varphi \in \Theta_M} \mathcal S_\varphi \mu_{m'}^\varphi\left( B_{\epsilon/2}(\rho_{m'}^\varphi) \right) \mathbf 1_{ \mathcal S_\varphi \mu_{m'}^\varphi(B_{\epsilon/2}(\rho_{m'}^\varphi)) \leq \delta} \mathbf 1_{\mu_{m'}^\varphi(B_{\epsilon/2}(\rho_{m'}^\varphi)) \leq  \varepsilon /4 } \right] \\
+ \mathbf E_m \left[ \sum_{\varphi \in \Theta_M} \mathcal S_\varphi \mu_{m'}^\varphi\left( B_{\epsilon/2}(\rho_{m'}^\varphi) \right) \mathbf 1_{ \mathcal S_\varphi \mu_{m'}^\varphi(B_{\epsilon/2}(\rho_{m'}^\varphi)) \leq \delta}  \mathbf 1_{\mu_{m'}^\varphi(B_{\epsilon/2}(\rho_{m'}^\varphi)) \geq  \varepsilon /4 }\right],
\end{multline*}
where we noted $m':=m-M$.

The first term is bounded by $\varepsilon/4$, and the second one by:
\begin{equation*}
\mathbf E_m \left[\sum_{\varphi \in \Theta_M} \mathcal S_\varphi \mathbf 1_{\mathcal S_\varphi \leq  4\delta /\varepsilon} \right].
\end{equation*}

We follow \cite[Section 1.2.3]{MR2253162} for this bound. With the setting of Bertoin, we can see the family $(\mathcal S_\varphi)_{n \in \mathbb{N}, \varphi \in \Theta_n}$ as the genealogical tree of a self-similar fragmentation. It defines a probability measure $\mathbb P^*$ describing the distribution of a randomly tagged branch. With \cite[Lemma 1.4]{MR2253162}, if we note by $\bar{\mathcal S_k}$ a random variable describing the size of the subspace of level $k$ in which the randomly tagged particle falls in, we get that:
\begin{equation*}
\mathbf E_m \left[\sum_{\varphi \in \Theta_M} \mathcal S_\varphi \mathbf 1_{\mathcal S_\varphi \leq  4\delta /\varepsilon} \right] = \mathbb E^* \left[\mathbf 1_{\bar{\mathcal S_M} \leq  4\delta /\varepsilon} \right]=\mathbb P^* \left(\bar{\mathcal S_M} \leq  4\delta /\varepsilon \right).
\end{equation*}

\cite[Proposition 1.6]{MR2253162} gives us a way to exploit this expression. Write:
\begin{equation*}
\mathbb P^* \left(\bar{\mathcal S_M} \leq  4\delta /\varepsilon \right)=\mathbb P^* \left( - \log \bar{\mathcal S_M} \geq  \tau \right) \mbox{ for } \tau=- \log (4\delta /\varepsilon),
\end{equation*}
and $(- \log \bar{\mathcal S_k})_{k \in \mathbb N}$ is a random walk on $\mathbb R+$ of which we can specify the step distribution. Now since $\bar{\mathcal S_1}>0$ almost surely, the random variable  $- \log \bar{\mathcal S_M}$ is finite almost surely. Let $q>0$ be an $\varepsilon/4$-quantile of this random variable, such that $\mathbb P^* \left( - \log \bar{\mathcal S_M} \geq  q \right) \leq \varepsilon/4$. $\delta$ chosen so that $\delta < \varepsilon e^{-q} /4$ yields the desired bound for the last term.

We proved the bound $\limsup_{m \in \mathbb N} \mathbf E_m \left[v_\delta(\mathcal X_m)\right] \leq \varepsilon$. The sequence of random metric spaces verifies both hypotheses of \cite[Theorem 3]{greven2009convergence}, hence the tightness.
\end{proof}

\subsection{Proof of Theorem \ref{Corpfmoment}}

\begin{proof}[Proof of Theorem \ref{Corpfmoment}]
\emph{(i)} is a straightforward application of \cite[Lemma 4.4]{jelenkovic2012implicit}. The fix-point $\eta$ is constructed as previously, with the distance to a random point having finite $p$th moment.

\emph{(ii)}
Let $m\in \mathbb N$ be chosen such that 
\begin{equation}\label{choixm}
m \alpha>1.
\end{equation}
Since the random variables $\mathcal R_i$ are all lesser than $1$, we have
\begin{equation}
\mathbb E \left[\sum_{i \in \Gamma} \mathcal R_i^{m\alpha} \right]< 1
\end{equation}

We consider the sequence of iterated spaces $(\mathcal X_n)_n$ from the construction of the limit space $\mathcal X$ described in Section \ref{sequencerandomspaces}.

We want constants $r<1$ and $C>0$ such that for all $n \in \mathbb N$:
\begin{equation}
\bbE \left[ \left( h(\mathcal X_{n+1})\right)^m \right]^{\frac{1}{m}} \leq r \bbE \left[(h\left(\mathcal X_{n})\right)^m \right]^{\frac{1}{m}} + C.
\end{equation}
With that in hand, the limit converges when $n$ tends to infinity, which ensures that  $\bbE\left[\left(h(\mathcal X)\right)^m\right]<+\infty$.

Now let $n\geq 0$. By the construction of the space $\mathcal X_{n+1}$ from the spaces $(\mathcal X_n^i)_{i \in \Gamma}$ and parameters $(\mathcal R,\mathcal S,\mathcal L)$, we have the relation concerning the height:
\begin{equation}\label{eq:basehm}
h(\mathcal X_{n+1}) = \sup\limits_{i \in \Gamma} \left\{ \mathcal R_i^\alpha h(\mathcal X^i_{n}) + \mathcal L (\emptyset,i) + \sum\limits_{j \neq i : i \in \Gamma_j} \mathcal R_j^\alpha d_n^j \left(\rho_n^j,\eta_n^j\right) \right\}
\end{equation}

Taking the $m$-th moment from Equation \eqref{eq:basehm} and maximizing each term:
\begin{multline}\label{eq:hmsum}
\bbE \left[ h^m(\mathcal X_{n+1})\right]^{\frac{1}{m}} \\ \leq \bbE \left[ \left\{\sup\limits_{i \in \Gamma} \mathcal R_i^\alpha h(\mathcal X^i_{n})  + \sup\limits_{i \in \Gamma}\mathcal L (\emptyset,i) + \sup\limits_{i \in \Gamma} \sum\limits_{j \neq i : i \in \Gamma_j} \mathcal R_j^\alpha d_n^j \left(\rho_n^j,\eta_n^j\right) \right\}^m \right]^{\frac{1}{m}}
\end{multline}

Then from Minkowski's inequality:
\begin{multline}
\bbE \left[ h^m(\mathcal X_{n+1})\right]^{\frac{1}{m}} \leq \bbE \left[\sup\limits_{i \in \Gamma}  \mathcal R_i^{m \alpha} h^m(\mathcal X^i_{n})\right]^\frac{1}{m}  +\bbE \left[ \sup\limits_{i \in \Gamma}\mathcal L ^m(\emptyset,i) \right]^\frac{1}{m} \\
 +\bbE \left[ \sup\limits_{i \in \Gamma}\left( \sum\limits_{j \neq i : i \in \Gamma_j} \mathcal R_j^\alpha d_n^j \left(\rho_n^j,\eta_n^j\right)\right)^m \right]^\frac{1}{m}
\end{multline}
In the first term, we can bound the supremum by the sum over all $i \in \Gamma$:
\begin{equation}
\begin{split}
\bbE \left[\sup\limits_{i \in \Gamma}  \mathcal R_i^{m \alpha} h^m (\mathcal X^i_{n})\right]^{\frac{1}{m}} &\leq \left( \bbE \left[\sum\limits_{i \in \Gamma}  \mathcal R_i^{m \alpha} h^m (\mathcal X^i_{n})\right] \right)^{\frac{1}{m}} \\
&\leq \bbE \left[\sum\limits_{i \in \Gamma}  \mathcal R_i^{m \alpha} \right] ^{\frac{1}{m}} \bbE \left[ h^m (\mathcal X_{n})\right]^{\frac{1}{m}}.
\end{split}
\end{equation}

The second term is finite: this is an hypothesis of the theorem.

For the last one, by Minkowski's inequality for the conditional expectancy and a fixed index $i$, since there are only finitely many terms in the sum, we obtain
\begin{multline}
\bbE \left[\left.\left( \sum\limits_{j \neq i : i \in \Gamma_j} \mathcal R_j^\alpha d_n^j \left(\rho_n^j,\eta_n^j\right)\right)^m \right\vert \mathcal R,\mathcal S,\mathcal L \right] \\
\begin{aligned}
&\leq \left( \sum\limits_{j \neq i : i \in \Gamma_j} \mathcal R_j^\alpha \bbE \left[\left. d_n^j \left(\rho_n^j,\eta_n^j\right)^m \right\vert \mathcal R,\mathcal S,\mathcal L \right]^{\frac{1}{m}} \right)^m\\
&\leq \left( \sum\limits_{j \neq i : i \in \Gamma_j} \mathcal R_j^\alpha \bbE \left[ Y^{m} \right]^{\frac{1}{m}} \right)^m\\
&\leq \left( \sum\limits_{j \neq i : i \in \Gamma_j} \mathcal R_j^\alpha \Vert Y \Vert_{L^m} \right)^m\\
&= \left( \sum\limits_{j \neq i : i \in \Gamma_j} \mathcal R_j^\alpha\right)^m \Vert Y \Vert_{L^m},
\end{aligned}
\end{multline}
where $\Vert Y \Vert_{L^m}$ stands for $\bbE \left[Y^m\right]^{\frac{1}{m}}$ which is finite from the first part of the theorem. Then we can take the supremum over all $i \in \Gamma $ and the mean, which is a finite from Hypothesis \eqref{hyphaut}.
\end{proof}

\subsection{Convergence of iterated sequences toward the fixed-point}
\label{sec:preuveCV}
\begin{llem} \label{lemiteres}
Under Hypotheses \eqref{hypLnonnul} and \eqref{condsufeqnY} with $p=1$, let $\delta$ be a probability distribution on $\kgp$ satisfying the hypothesis of Theorem \ref{thmcviteres}. For all $m \geq 0$ let $D_{\delta,1}^{(m)}$ be a random variable with distribution $\nu_1^{\phi^m (\delta)}$. Then $D_{\delta,1}^{(m)} \rightarrow D_1$ in distribution, and $\bbE [D_{\delta,1}^{(m)}] \rightarrow \bbE[D_1]$ as $m$ tends to infinity.
\end{llem}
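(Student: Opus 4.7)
The plan is to identify $D_{\delta,1}^{(m)}$ with the distance from the root to a random point in a rooted measured metric space obtained by $m$ iterations of the recursion in Section \ref{sectiondescriptionmodele}, the input spaces at depth $m$ being iid samples from $\delta$. Unfolding the recursion along the tree $\Theta$ truncated at level $m$, and using the same decomposition that yielded Equation \eqref{eqndptunif}, one can couple everything on the probability space carrying the family $\{(\mathcal R^{(\theta)},\mathcal S^{(\theta)},\mathcal L^{(\theta)})\,:\,\theta\in\Theta\}$ used throughout Section \ref{sectionexuniq} and write
\begin{equation*}
D_{\delta,1}^{(m)} = Y_m + E_m, \qquad E_m := \sum_{\theta\in\Theta_m} C_\theta\, Y_\delta^{(\theta)},
\end{equation*}
where $Y_m$ is the $m$-th iterate from Section \ref{sectionminimalsolu} and $(Y_\delta^{(\theta)})_{\theta\in\Theta_m}$ is an independent family of iid copies, independent of $(C_\theta,Q^{(\theta)})_\theta$, of the distance from the root to a random point under $\delta$.

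Next, I would control $E_m$ in $L^1$. By independence and the iid structure,
\begin{equation*}
\bbE[E_m] = \bbE[Y_\delta]\,\bbE\Big[\sum_{\theta\in\Theta_m} C_\theta\Big].
\end{equation*}
The hypothesis $\delta\in\munp{1}$ gives $\bbE[Y_\delta]<+\infty$. On the other hand, the bound already established in Equation \eqref{rhom} during the tightness argument yields $\bbE[\sum_{\theta\in\Theta_m}C_\theta]\leq \varrho^m$, with $\varrho := \bbE[\sum_{i\in\Gamma}\mathbf 1_{J\in\Gamma_i}\mathcal R_i^\alpha]<1$ by Hypothesis \eqref{condsufeqnY} for $p=1$. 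Hence $\bbE[E_m]\leq \bbE[Y_\delta]\varrho^m\to 0$, so that $E_m\to 0$ in $L^1$.

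Finally, Proposition \ref{uniquenessYL1} guarantees that $Y_m\to Y$ in $L^1$, and $Y$ has the distribution of $D_1$ since it is the height of a random point in a space distributed according to the fixed-point $\eta$ from Theorem \ref{thmppl}. Combining both ingredients, $D_{\delta,1}^{(m)}=Y_m+E_m$ converges in $L^1$ to $D_1$, which entails both the convergence in distribution and the convergence of expectations claimed in the lemma. The argument is essentially a coupling computation and the only subtle point is to set up the construction of the iterates $\phi^m(\delta)$ on the same probability space as the minimal solution of Section \ref{sectionminimalsolu}, so that the additive decomposition $Y_m+E_m$ is valid pathwise rather than merely in distribution.
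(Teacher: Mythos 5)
Your proof is correct and follows essentially the same route as the paper: both use the pathwise decomposition $D_{\delta,1}^{(m)} = Y_m + \sum_{\theta\in\Theta_m} C_\theta D_{\delta,1}^{(0,\theta)}$ along the coupled tree $\Theta$, the bound $\bbE[\sum_{\theta\in\Theta_m}C_\theta]\leq\varrho^m$ from Equation \eqref{rhom}, and Proposition \ref{uniquenessYL1} for $Y_m\to Y$. The only cosmetic difference is that you push the error term to zero in $L^1$ and conclude by $L^1$ convergence, whereas the paper shows the error term tends to zero in probability, invokes Slutsky for convergence in distribution, and cites \cite[Lemma 4.5]{jelenkovic2012implicit} for convergence of the mean; the two variants are equivalent here since all quantities are nonnegative.
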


\begin{proof}
Note that the matrices considered here have size 2 and are symmetric with $0$ on the diagonal, hence we identify them with their non diagonal coefficient in the remaining of the proof.

Since we work with nonnegative random variables now, the convergence in distribution is deduced from \cite[Lemma 4.5]{jelenkovic2012implicit} as for the converence of the mean. We have for all $m \geq 1$:
\begin{equation*}
D_{\delta,1}^{(m)}= Y_m + \sum_{\theta \in \Theta_m} C_\theta D_{\delta,1}^{(0,\theta)}
\end{equation*}
where $Y_m$ and $C_\theta$ are taken from Section \ref{sectionminimalsolu} and the $(D_{\delta,1}^{(0,\theta)})_{\theta}$ are iid copies of $D_{\delta,1}^{(0)}$ with distribution $\nu_1^\delta$, independent from the other parameters. $Y_m \rightarrow Y$ almost surely from Proposition \ref{uniquenessYL1}, hence we just need a convergence of the second term towards $0$ thanks to Slutsky's lemma. For all $\varepsilon>0$
\begin{equation*}
\begin{split}
\mathbb P \left(\sum_{\theta \in \Theta_m} C_\theta D_{\delta,1}^{(0,\theta)}> \varepsilon\right)& \leq \varepsilon^{-1} \bbE \left[\sum_{\theta \in \Theta_m} C_\theta D_{\delta,1}^{(0,\theta)}\right] \\
& \leq \varepsilon^{-1} \bbE \left[ \sum_{\theta \in \Theta_m} C_\theta \right] \bbE \left[ D_{\delta,1}^{(0)} \right]
\end{split}
\end{equation*}
where from Hypothesis \eqref{condsufeqnY} with $p=1$, $\bbE [ \Sigma_{\theta \in \Theta_m} C_\theta ]<\varrho^m$ with the same argument as in Equation \eqref{rhom}. The right-hand term then converges to $0$ as $m$ tends to infinity.
\end{proof}

\begin{proof}[Proof of Theorem \ref{thmcviteres}]
for integers $m,n \geq 1$, similarly to Lemma \ref{lemiteres}, we consider $D_{\delta,n}^{(m)}$ a random variable with distribution $\nu_n^{\phi^m(\delta)}$. The weak convergence of the measures in $\kgp$ is deduced from the convergence in distribution and in mean for all $n$ of
\begin{equation*}
D_{\delta,n}^{(m)} \rightarrow D_n
\end{equation*}
as $m$ tends to infinity, with Proposition \ref{critereCVmesures}.

Lemma \ref{lemiteres} corresponds to the case $n=1$ that we have to prove. For the induction on $n$, we refer to the proof of \cite[Theorem 2 (i)]{broutin2016self}: it relies on a contraction argument, where the equivalent of their $\bbE[U_n]$ in our setup is bounded by $\mathbb E \left[ \sum_{i \in \Gamma} \mathbf 1_{J \in \Gamma_i} \mathcal R_i^\alpha \right]<1$ from Hypothesis \eqref{condsufeqnY} with $p=1$.
\end{proof}

\section{Proof of the geometric properties of the fixpoint}
\label{sectiongeo}

\subsection{Upper bound on the upper Minkowski dimension}

In order to bound almost surely from above the upper Minkowski dimension of our metric space $\mathcal X$ fixed-point in distribution of Equation \eqref{eqnppl}, we need to count the cardinality of a covering of the space by balls of the same radius.

Intuitively, since $\mathcal X$ satisfies a recursion via Equation \eqref{eqnppl}, a good direction is to develop $\mathcal X$ for one or several levels of iteration. However, because $\Gamma$ can be infinite, one cannot hope to get a general bound on the upper Minkowski dimension by a naive inequality like
\begin{equation} \label{naiveminkowski}
N_{\mathcal X} (\varepsilon) \leq \sum_{i \in \Gamma} N_{\mathcal R_i^\alpha \mathcal X_i} (\varepsilon),
\end{equation}
because the sum is indexed by $\Gamma$ possibly infinite and each term is $1$ or greater. Instead of the naive bound \eqref{naiveminkowski}, we bound the cardinality of a covering of $\mathcal X$ by the coverings of the biggest spaces after rescaling together with a covering of the structure on which it is glued. The latter is the random metric space $\mathcal B$ on which we made several hypothesis concerning the geometry in Section \ref{sec:hyp}, and which can be seen as an independent copy of $\mathcal X_1^\emptyset$. A covering of this metric space should contain the little $\mathcal R_i^\alpha \mathcal X^i$ spaces. Under the right hypothesis, we should then have a relation close to
\begin{equation*}
N_{\mathcal X} (\varepsilon) \leq \sum_{i \in \Gamma} N_{\mathcal R_i^\alpha \mathcal X^i} (\varepsilon) \mathbf 1_{ \{\mathcal R_i^\alpha \mathcal X^i \mbox{ is "big"} \}} + N_{\mathcal B}(\varepsilon).
\end{equation*}
What is left to decide is the threshold at with we decide either to count the cardinality of the covering, or consider that it is inside the covering of $\mathcal B$.

We consider a "Cut-line" or a "Stopping-line" for the discrete fragmentation process described by the $\mathcal R_\theta$-s for $\varepsilon>0$ as follows:
\begin{equation}\label{cutspaceceps}
\mathcal C_\varepsilon := \left\{ \theta = i_{1} i_{2} \ldots i_n \in \Theta :  \mathcal R_{i_1} \geq \varepsilon , \ldots , \mathcal R_{i_1 \ldots i_{n-1}} \geq \varepsilon ,\mathcal R_\theta < \varepsilon \right\}.
\end{equation}

On this cut-line, $\mathcal X$ can be decomposed into subspaces $\mathcal X^\theta$ for $\theta \in \mathcal C_\varepsilon$, glued together onto what was added on the iterations of order $\theta$ for all indexes before $\mathcal C_\varepsilon$. We denote $\mathcal C_\varepsilon ^\uparrow := \left\{ \theta \in \Theta : \mathcal R_\theta \geq \varepsilon \right\}$ the set of the indexes above the cutline.

In order to obtain subspaces which distances are of order $\varepsilon$, we have to consider the cut-space $\cepsalpha$ to match our actual scaling. At this point, we cover the entire space $\mathcal X$ by covering the biggest spaces of $\cepsalpha$ by a finite bounded number of balls of radius $\varepsilon$, and also by covering the copies of $\mathcal B$ added and rescaled at all the indexes in $\cepsalphaf$ by a number of balls depending on $\dup$ which comes from Hypothesis \eqref{hypdimms}. If we chose well the threshold, the sufficiently small spaces of $\epsalpha$ will be included in balls from the covering of the copies of $\mathcal B$ above the cutline.

\begin{proof}[Proof of Theorem \ref{thmborndimms}]
As indicated, for a fixed $\varepsilon>0$ and the corresponding cutline $\cepsalpha$, we consider separately a covering of the big subpsaces on the cutline and one of the previously construced rescaled copies of $\mathcal B$, above the cutline. Several cases will be studied but the covering we consider proves to be sharp enough for the required bound on $\dimms \left(\mathcal X\right)$.
 
\paragraph{Big subspaces} We first bound the number of balls of radius $\varepsilon$ necessary to cover the biggest subspaces that appear on the cutline. By big subspaces, we mean the ones not included in the ball centered in the root $\rho_\theta$ of radius $\varepsilon/2$. The number of such subspaces $\nbigeps$ is described easily as
\begin{equation}\label{cardbigsubspaces}
\nbigeps := \sum\limits_{\theta \in \cepsalpha} \mathbf 1_{\mathcal R_\theta^\alpha h(\mathcal X^\theta)>\frac{\varepsilon}{2}}.
\end{equation}
By Markov's inequality, we can bound the conditional expectation of this random variable, conditionnaly on the cutline $\cepsalpha$ for a $p>0$:
\begin{equation*}
\bbE \left[ \nbigeps \vert \cepsalpha \right] \leq \sum\limits_{\theta \in \cepsalpha} \mathbb E \left[ \mathcal R_\theta^{\alpha p} \vert \cepsalpha \right] \mathbb E \left[ h(\mathcal X^\theta)^p\right] 2^p \varepsilon^{-p}.
\end{equation*}
We used a generalized Markov property, which ensures that $h(\mathcal X^\theta)$ is independent of $\cepsalpha$ for every $\theta$ and the $(h(\mathcal X^\theta))_{\theta \in \cepsalpha}$ are iid. Now for any choice of $p$ such that $\alpha p>1$, we have
\begin{equation*}
\sum\limits_{\theta \in \cepsalpha} \mathbb E \left[ \mathcal R_\theta^{\alpha p} \vert \cepsalpha \right] \leq 1,
\end{equation*}
hence we obtain
\begin{equation*}\bbE \left[ \nbigeps \vert \cepsalpha \right] \leq K \varepsilon^{-1/\alpha}
\end{equation*}
for a constant $K$, since we supposed the finiteness of $p$-moment for $h(\mathcal X)$.

Now in order to cover each of these subspaces by $\varepsilon$-balls, we remark here that for any $\theta \in \cepsalpha$, by hypothesis, $\mathcal R_\theta<\varepsilon^{\frac{1}{\alpha}}$ almost surely. Hence a good bound on the mean of the cardinality of a covering of the big subspaces would be
\begin{equation}\label{plbnbigeps}
K \varepsilon^{-\frac{1}{\alpha}} \mathbb E \left[ N_{\mathcal X} (1) \right].
\end{equation}

\paragraph{Above the cutline}\label{para:abovecutline} We now cover evarything that was constructed before reaching the cutline, namely rescaled copies of $\mathcal B$ indexed by $\cepsalphaf$, by balls of radius $\frac{\varepsilon}{2}$. Let $N_\varepsilon^{\uparrow}$ be the cardinality of such a covering. Here we chose $\frac{\varepsilon}{2}$ over $\varepsilon$ in order to avoid problems when we take a global covering of $\mathcal X$: from the $\frac{\varepsilon}{2}$-covering of $\mathcal B$ we turn it into a $\varepsilon$-covering simply by increasing the radius of the balls already chosen. Doing so, the spaces such that $\mathcal R_\theta^\alpha h(\mathcal X^\theta)<\frac{\varepsilon}{2}$ are all entirely inside balls with radius $\varepsilon$ centered in $\mathcal B$.

In order to cover $\mathcal B$ by balls of radius $\varepsilon/2$, we need at most $\left(\frac{\varepsilon}{2}\right)^{-\dup}$ of such balls, times the diameter, which has finite. As for a subspace rescaled in distance by $\mathcal R_\theta^\alpha$, we need $\left(\mathcal R_\theta^{-\alpha} \frac{ \varepsilon}{2}\right)^{-\dup} h(\mathcal X^\theta)$ balls instead.

The term to be computed is then
\begin{equation}\label{boulesblocks}
\bbE \left[ N_\varepsilon^\uparrow \right] = \bbE \left[ \sum_{\cepsalphaf} \left( \frac{2 \mathcal R_\theta ^\alpha}{\varepsilon} \right)^{\dup} h(\mathcal X^\theta)\right]=2^{\dup} \varepsilon^{-\dup} \bbE\left[ h(\mathcal X^\theta) \right] \bbE \left[ \sum_{\cepsalphaf} \mathcal R_\theta ^{\alpha \dup} \right] ,
\end{equation}
and we focus on the last expectation in equation \eqref{boulesblocks}.

We first compute this expectation for $\epsalpha=2^{-m}$
\begin{equation}\label{smhausdorff}
S_m:= \bbE \left[ \sum_{\cepsalphaf} \mathcal R_\theta ^{\alpha \dup} \right] = \bbE \left[ \sum_{\theta \in \Theta} \sum_{k=0}^{m-1} \mathbf 1_{2^{-k-1} \leq \mathcal R_\theta < 2^{-k}} \mathcal R_\theta^{\alpha \dup} \right]=\sum_{k=0}^{m-1} \smk.
\end{equation}

Now for a fixed $k$, we develop $\smk$ in order to make a $\mathcal R_\theta$ to the power $1$ appear. This exponent $1$ on which we emphasize here plays the role of the so-called \emph{Malthusian exponent} of the fragmentation process which genealogy is described by the family $\left(\mathcal R_\theta \right)_{\theta \in \Theta}$, as for instance in \cite{MR2253162}. We also distinguish wether $\mathcal R_{\theta^-}$ is greater or lesser than $2^{-k}$ where we recall that $\theta^-$ is the word such that there is $i \in \Gamma$ such that $\theta^- i = \theta$.
\begin{equation}\label{smk}
\begin{split}
\smk &= \bbE \left[ \sum_{\theta \in \Theta}  \mathbf 1_{2^{-k-1} \leq \mathcal R_\theta < 2^{-k}} \mathcal R_\theta \mathcal R_\theta^{\alpha \dup-1} \right] \\
&= \bbE \left[ \sum_{\theta \in \Theta}  \mathbf 1_{2^{-k-1} \leq \mathcal R_\theta < 2^{-k}} \mathcal R_\theta \mathcal R_\theta^{\alpha \dup-1} \left(\mathbf 1_{\mathcal R_{\theta^-} < 2^{-k}} + \mathbf 1_{\mathcal R_{\theta^-} \geq 2^{-k}} \right) \right].
\end{split}
\end{equation}

An index $\theta$ such that $\mathcal R_{\theta^-}\geq 2^{-k}$ and $\mathcal R_\theta<2^{-k}$ is by definition in $\mathcal C_{2^{-k}}$. If $\mathcal R_\theta<2^{-k}$ but $\mathcal R_{\theta^-}< 2^{-k}$, there is a unique couple $(\varphi,\psi) \in \Theta^2$ such that $\theta=\varphi \psi$ and $\varphi \in \mathcal C_{2^{-k}}$. We may then reindex the sum as follows:
\begin{equation}\label{smknlleforme}
\smk=\bbE \left[ \sum_{ \theta \in \mathcal C_{2^{-k}}} \mathbf 1_{\mathcal R_\theta \geq 2^{-k-1}} \mathcal R_\theta \mathcal R_\theta^{\alpha \dup-1} M_\theta \right],
\end{equation}
where
\begin{equation*}
 M_\theta:=\sum_{\varphi \in \Theta} ( \mathcal R_\varphi^{(\theta)})^{\alpha \dup} \mathbf 1_{\{ \mathcal R_\theta \mathcal R_\varphi^{(\theta)} \geq 2^{-k-1} \}}
\end{equation*}
which is always greater than $1$.

Since $\sum_{i \in \Gamma} \mathcal R_i=1$ almost surely, at most one of the $\mathcal R_i$s is greater than $\frac{1}{2}$. We denote by $I$ a random index such that for all $i \in \Gamma$, $\mathcal R_I \geq \mathcal R_i$ almost surely. This way $\mathcal R_I$ is the only $\mathcal R_i$ that can be greater than $\frac{1}{2}$. With this remark, it is clear that the indices of the $\mathcal R_\varphi$ appearing in the expression of $M_\theta$ are all some $\prod_{j=0}^m \mathcal R^{\theta I_1 \ldots I_{j-1}}_{I_j}$ for $m \in \mathbb N$.

Three cases appear from now on, depending on the sign of $\beta:=\alpha \dup-1$, leading to different bounds for $\dimms\left(\mathcal X \right)$.

\paragraph{a)} If $\beta>0$ which means that $\frac{1}{\alpha}<\dup$ we can bound $2^{-(k+1)\beta} \leq \mathcal R_\theta^\beta < 2^{-k\beta}$ almost surely if $2^{-k-1} \leq \mathcal R_\theta < 2^{-k}$.
Hence we can bound every $\smk$ from Equation \eqref{smknlleforme} as follows:
\begin{equation*}
\smk \leq \bbE \left[ \sum_{\theta \in \mathcal C_{2^{-k}}} \mathbf 1_{2^{-k-1} \leq \mathcal R_\theta < 2^{-k}} \mathcal R_\theta  M_\theta \right]2^{-k\beta}
\end{equation*}
 We have almost surely $M_\theta \leq \max \{ m \in \mathbb N : \prod_{j=0}^m \mathcal R^{\theta I_1 \ldots I_{j-1}}_{I_j} \geq \frac{1}{2} \}$. Indeed, if $\prod_{j=0}^m \mathcal R^{\theta I_1 \ldots I_{j-1}}_{I_j} < \frac{1}{2}$, then necessarily the corresponding $\mathcal R_\varphi$ that appears in the sum $\smk$ is not in the range $(2^{-k-1},2^{-k})$. Each $\mathcal R^{\theta I_1 \ldots I_{j-1}}_{I_j}$ is then bounded by $1$. Note that this bound is independent of the stopping line $\mathcal C_{2^{-k}}$, and that it does not depend on $k$ by the generalized Markov property.
\begin{equation}\label{smklastupper}
\smk \leq \bbE \left[ \sum_{\theta \in \mathcal C_{2^{-k}}} \mathbf 1_{2^{-k-1} \leq \mathcal R_\theta < 2^{-k}} \mathcal R_\theta  \right] \bbE \left[ M_\emptyset \right] 2^{-k\beta},
\end{equation}
%$\bbE [M_\emptyset]< + \infty$.
from the remark that $M_\theta$ is independent of the stopping line $\mathcal C_{2^{-k}}$. In order to understand the last term, we remark that it is the same as in \cite[Section 1.4.4]{MR2253162}. 
\begin{equation*}
\sum_{\theta \in \mathcal C_{2^{-k}}} \mathbf 1_{2^{-k-1} \leq \mathcal R_\theta < 2^{-k}} \mathcal R_\theta =  \nu_{2^{-k}} \left( \left[ \frac{1}{2},1 \right] \right)
\end{equation*}
with $\nu_\eta$ corresponding to the $\varphi_\eta$ in the notations of \cite{MR2253162}. Note that the results of Bertoin from \cite{MR2253162} apply in our case since it only depends on the genealogical tree introduced earlier in the monograph, corresponding to our $\Theta$. The following theorem in \cite{MR2253162} states that $\nu_{\eta}$ is a random measure on $[0,1]$ that converges in probability as $\eta$ tends to $0$ towards a deterministic measure $\nu$ which expression is given by:
\begin{equation*}
\nu(da):= \bbP \left(  \mathcal R_* <a \right) \frac{da}{a \bbE \left[ -\log \mathcal R_*\right]},
\end{equation*}
where $\mathcal R_*$ is defined in Section \ref{sec:resultdimf} and represents a random $\mathcal R_i$ chosen with a probability biased by its size.

Hence $\left( \bbE \left[ \nu_{2^{-k}} \left( \left[ \frac{1}{2},1 \right] \right) \right]\right)_k$ converges towards $\nu\left( \left[ \frac{1}{2},1 \right] \right)$ and in particular the sequence is upper bounded.

We obtain the ultimate bound on $\smk=O \left( 2^{-(k+1) \vert \beta \vert} \right)$ which leads to 
\begin{equation*}
S_m = O \left( \sum_0^{m-1}  2^{-(k+1) \vert \beta \vert} \right)= O(1).
\end{equation*}

As a partial conclusion in this case $\beta>0$, we have $\bbE \left[ N_\varepsilon^\uparrow \right] =O(\varepsilon^{-\dup})$.

\paragraph{b)} If $\beta<0$, which means $\frac{1}{\alpha}>\dup$, we can bound $2^{-(k+1)\beta} \geq \mathcal R_\theta^\beta > 2^{-k\beta}$ almost surely if $2^{-k-1} \leq \mathcal R_\theta < 2^{-k}$:
\begin{equation}
\sum\limits_{k=0}^{m-1} \bbE \left[\nu_{2^{-k}}([\frac{1}{2},1]) \right] 2^{-k \beta} \leq S_m \leq \sum\limits_{k=0}^{m-1} \bbE \left[\nu_{2^{-k}}([\frac{1}{2},1]) \right] 2^{-(k+1) \beta} \bbE \left[M_\emptyset\right].
\end{equation}
On both sides, since $(\nu_{2^{-k}})_k$ converges in probability towards $\nu$ again from \cite{MR2253162}, $(\bbE \left[\nu_{2^{-k}}([\frac{1}{2},1]) \right])_k$ converges towards a positive constant $\ell:=\nu \left(\left[\frac{1}{2},1\right]\right)$. For any $\eta>0$ such that $\eta<\ell/2$, we can find $q$ sufficiently large after which each term of the sequence is separated from $\ell$ at most by a distance $\eta$. From there we only look at the terms of the sequence after that $q$ and condense the constant terms on both sides by $K$ and $K'$:
\begin{equation*}
K + \sum\limits_{k=q}^m \frac{\ell}{2} 2^{-k \beta} \leq S_m \leq K' + \sum\limits_{k=q}^m \frac{3 \ell}{2} 2^{-(k+1) \beta} \bbE\left[M_\emptyset\right]
\end{equation*}
Since $\beta$ is negative, each sum is equivalent to a constant times $2^{-m \beta}$. Hence $S_m$ is of order $(\varepsilon^{\frac{1}{\alpha}})^{ -\beta}=\varepsilon ^{\dup-\frac{1}{\alpha}}$. We can finally precise the power law behaviour:
\begin{equation}\label{betaneg}
\bbE \left[ N^\uparrow_\varepsilon \right] \mbox{ is of order } \varepsilon^{-\frac{1}{\alpha}}.
\end{equation}

\paragraph{c)} The last case to notice is the intermediary when $\frac{1}{\alpha}=\dup$ which leads to $\mathcal R_\theta^\beta= \mathcal R_\theta^0=1$ almost surely. The proof leading to \eqref{betaneg} can be adapted to give an equivalent of order $\varepsilon^{-\dup} \log \varepsilon$, where the $\log$ term vanishes when we only focus on the power-law behaviour.

\paragraph{Conclusion} We construct a covering of the entire space $\mathcal X$ as follows. The big subspaces of $\cepsalpha$ are covered as described previously by balls of radius $\varepsilon$, which constitut $N^b_\varepsilon$ balls. The little subsets of the cutline which have height lesser than $\varepsilon/2$ fall inside the balls constituing the covering of what lies above the cutline described in Paragraph \ref{para:abovecutline}, on which we changed the radius from $\varepsilon/2$ to $\varepsilon$. Doing so we obtain a covering of $\mathcal X$ which cardinal is $N_\varepsilon:= N_\varepsilon^{\mathrm b} + N_\varepsilon^\uparrow$.

This leads in both case \textbf{a)} and \textbf{b)} that $\bbE [ N_\varepsilon ] = O \left( \varepsilon^{-\max \left(\frac{1}{\alpha} , \dup \right)} \right)$, at least for $\varepsilon=2^{-\alpha m}$. The intermediate situation \textbf{c)} gives the same power-law behaviour.

Using Markov's inequality and the Borel-Cantelli lemma, for any $\varrho> \max ( \dup , \frac{1}{\alpha})$, it entails that $N^b_{2^{-\alpha m}}2^{-\varrho m} \rightarrow 0$. From the monotony of $N_x^b$ in $x$, we deduce that $N_x^b x^\varrho \rightarrow 0$ almost surely as $x$ tends to $0$.
\end{proof}

\subsection{Lower bound on the Hausdorff dimension}
\label{sec:proofdimh}

In order to bound from below the Hausdorff dimension of a metric space $(\mathcal X,d)$, we construct a Borel probability measure on $\overline \mu$ on $\mathcal X$ in the next Proposition \ref{propmubarre} and apply the mass distribution principle recalled in \ref{frostman}. Intuitively, this measure is obtained by scaling at each step by the corresponding $\mathcal R$ instead of the $\mathcal S$ that we have used before. As in \cite{broutin2016self}, this measure is the same as the measure we constructed recursively on $\mathcal X$ in the case $\mathcal R = \mathcal S$.
\begin{lpro}\label{propmubarre}
Almost surely, for all $\theta$, there exist a (random) probability measure $\overline{\mu^\theta}$ on $\mathcal X$ such that $\overline{\mathcal X} := \left( X^\theta, d^\theta, \overline{\mu^\theta}, \rho_m^\theta \right)$ is a random rooted measured metric space and for every $\sigma \in \Theta$:
\begin{equation}
\overline{\mu^\theta} \left(\mathcal X^\theta_\sigma \right) = \frac{\mathcal R_{\theta \sigma}}{\mathcal R_\theta}.
\end{equation}
\end{lpro}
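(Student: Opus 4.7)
The plan is to construct $\overline{\mu^\theta}$ as the conditional distribution of a random point $\xi^\theta$ generated by iterated size-biased sampling along the structural tree, in analogy with the construction of $\mu^\theta$ in Section \ref{sequencerandomspaces} but with the roles of $\mathcal R$ and $\mathcal S$ interchanged. Fix $\theta$, let $\mathbf B := (\mathcal R^{(\psi)}, \mathcal S^{(\psi)}, \mathcal L^{(\psi)})_{\psi \in \Theta}$ collect the branching data, and introduce auxiliary random indices $(I_n^\theta)_{n \geq 1}$ satisfying
\begin{equation*}
\bbP\bigl(I_n^\theta = k \bigm| \mathbf B,\, I_1^\theta, \ldots, I_{n-1}^\theta \bigr) = \mathcal R^{(\theta I_1^\theta \ldots I_{n-1}^\theta)}_k, \quad k \in \Gamma.
\end{equation*}
Let $\xi_n^\theta$ denote the image in $\mathcal X^\theta$ of the root $\rho^{\theta I_1^\theta \ldots I_n^\theta}$ under the iterated canonical projections.

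The core analytic step is to show that $(\xi_n^\theta)_n$ is almost surely Cauchy in $(X^\theta, d^\theta)$. Since $\xi_{n+1}^\theta$ lies inside the embedded copy of $\mathcal X^{\theta I_1^\theta \ldots I_n^\theta}$, whose distances in $\mathcal X^\theta$ are rescaled by $(\mathcal R^{(\theta)}_{I_1^\theta \ldots I_n^\theta})^\alpha$, we get
\begin{equation*}
d^\theta(\xi_n^\theta, \xi_{n+1}^\theta) \leq \bigl(\mathcal R^{(\theta)}_{I_1^\theta \ldots I_n^\theta}\bigr)^\alpha \cdot h\bigl(\mathcal X^{\theta I_1^\theta \ldots I_n^\theta}\bigr).
\end{equation*}
By the size-biased construction, the rescaling factor is a product of $n$ i.i.d.\ copies of $\mathcal R_*$, and since the contraction hypothesis \eqref{condsufeqnY} rules out the degenerate case $\mathcal R_* \equiv 1$, we have $\bbE[\log \mathcal R_*] < 0$, so the strong law of large numbers applied to $\log \mathcal R_*$ yields geometric decay of the rescaling almost surely. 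The heights $h(\mathcal X^{\theta I_1^\theta \ldots I_n^\theta})$ are i.i.d.\ copies of $h(\mathcal X)$, independent of the rescaling sequence, with finite $p$-th moments for $p \geq 1/\alpha$ by Theorem \ref{Corpfmoment}. A Borel--Cantelli argument then gives $\sum_n d^\theta(\xi_n^\theta, \xi_{n+1}^\theta) < \infty$ almost surely, so $\xi_n^\theta$ converges to a limit $\xi^\theta \in X^\theta$.

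Finally, define $\overline{\mu^\theta}$ as a regular version of the conditional distribution of $\xi^\theta$ given $(\mathcal X^\theta, \mathbf B)$, which exists and yields a random Borel probability measure since $X^\theta$ is Polish. For $\sigma = i_1 \ldots i_n \in \Theta$, the nested structure of the construction gives $\{\xi^\theta \in \mathcal X^\theta_\sigma\} = \{(I_1^\theta, \ldots, I_n^\theta) = (i_1, \ldots, i_n)\}$ up to a null set (the block vertices, which lie outside $\supp(\mu^\theta)$), and the chain rule for the size-biased picks yields
\begin{equation*}
\bbP\bigl(\xi^\theta \in \mathcal X^\theta_\sigma \bigm| \mathbf B\bigr) = \prod_{k=1}^n \mathcal R^{(\theta i_1 \ldots i_{k-1})}_{i_k} = \frac{\mathcal R_{\theta\sigma}}{\mathcal R_\theta},
\end{equation*}
which is the desired identity. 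A union bound over the countable set $\Theta$ handles all $\sigma$ simultaneously, and a further union bound over $\theta \in \Theta$ gives the almost sure existence of the whole family $(\overline{\mu^\theta})_{\theta}$. The main difficulty is the almost sure Cauchy estimate: balancing the exponential decay of the rescaling against the polynomial tail of $h(\mathcal X)$ via Borel--Cantelli requires care, and the subsequent union bound over the countable tree $\Theta$ demands that the decay be quantitative enough to survive it.
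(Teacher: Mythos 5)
Your construction of $\overline{\mu^\theta}$ as the conditional law of a size-biased random walk is the probabilistic dual of the paper's argument, and I would call them essentially the same proof viewed from two sides. The paper defines, for each $n$, the purely atomic measure $\overline{\mu^\theta_n}$ placing mass $\mathcal R_{\theta\sigma}/\mathcal R_\theta$ on $\rho^{\theta\sigma}$ for $\sigma \in \Theta_n$, observes that these are consistent across levels because the $\mathcal R$'s sum to $1$ at each branching, and declares $\overline{\mu^\theta}$ to be "the almost sure limit." You instead track the particle $\xi^\theta_n$ chosen by iterated $\mathcal R$-size-biased picks, prove it is a.s.\ Cauchy, and take the regular conditional law of its limit. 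The random point $\xi^\theta_n$ is exactly the sampling representation of $\overline{\mu^\theta_n}$, so the constructions coincide; but your version has the merit of making explicit the convergence step which the paper's one-line "almost sure limit" leaves unjustified. (Consistency on the cylinder algebra alone does not force the limit measure to charge $X^\theta$ rather than escape; something like your Cauchy estimate is genuinely required, and you are right to flag it as the crux.)

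Two inaccuracies, one minor, one that deserves attention. Minor: the heights $h(\mathcal X^{\theta I_1 \ldots I_n})$ are \emph{not} i.i.d., since $\mathcal X^{\theta I_1 \ldots I_{n+1}}$ is a nested subspace of $\mathcal X^{\theta I_1 \ldots I_n}$ and the two are strongly dependent. What is true, and all your Borel--Cantelli bound actually uses, is that each $h(\mathcal X^{\theta I_1 \ldots I_n})$ is \emph{marginally} distributed as $h(\mathcal X)$ and is conditionally independent of the rescaling factor $\mathcal R_{\theta I_1 \ldots I_n}/\mathcal R_\theta$; that is enough for $\mathbb P\bigl(d^\theta(\xi_n^\theta,\xi_{n+1}^\theta)>c^n\bigr)\le c^{-np}\,\bbE[\mathcal R_*^{\alpha p}]^n\,\bbE[h(\mathcal X)^p]$ and the sum is finite for $\bbE[\mathcal R_*^{\alpha p}]^{1/p}<c<1$. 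More substantively: you invoke Theorem \ref{Corpfmoment} $(ii)$ for $\bbE[h(\mathcal X)^p]<\infty$, but that conclusion requires Hypothesis \eqref{hyphaut} together with $\bbE[\sup_i\mathcal L(\emptyset,i)^p]<\infty$, neither of which appears among the stated hypotheses of Proposition \ref{propmubarre} or Theorem \ref{thmborndimh}. The paper's terse proof sidesteps this issue by not discussing convergence at all, but the finiteness you need does not come for free; you should either add these hypotheses explicitly, note that they hold in the intended applications (stable trees and looptrees, where $\Gamma$ has height $\le 2$ and the lengths have Mittag-Leffler tails), or replace $h(\mathcal X^{\theta I_1\ldots I_n})$ in the Cauchy bound by the distance from $\rho^{\theta I_1\ldots I_n}$ to the specific glue point of the $I_{n+1}$-th child, which is a more controllable quantity and avoids the supremum over the whole support.
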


\begin{proof}
We first define for all $n \in \mathbb N$ the measure $\overline{\mu^\theta_n}$ on $X^\theta$ such that for all $\sigma \in \Theta_n$ we have
\begin{equation}
\overline{\mu^\theta_n} \left(X_\sigma^\theta \right):=\frac{\mathcal R_{\theta \sigma}}{\mathcal R_\theta}.
\end{equation}
By doing this, we have defined $\overline{\mu^\theta_n} \left(X_\sigma^\theta \right)$ for all $\sigma$ such that $\vert \sigma \vert \leq n$. We make the noteworthy remark that this construction is consistent in the sense that for all $m \geq n$ and $\sigma \in \Theta_n$, we have
\begin{equation}
\overline{\mu^\theta_m} \left(X_\sigma^\theta \right)=\overline{\mu^\theta_n} \left(X_\sigma^\theta \right).
\end{equation}
The measure $\overline{\mu^\theta} $ is constructed as the almost sure limit  of the sequence of random measures $(\overline{\mu^\theta_n})_n$.
\end{proof}

Again we will iterate the application $\phi$ on the space $\mathcal X$ so that it is seen as a glueing of several rescaled copies of itself onto rescaled copies of $\mathcal B$. The $\frac{1}{\alpha}$ is an asymptotic bound on the almost sure Hausdorff dimension of $\mathcal X$, in the sense that it appears as we iterate $\phi$ for a great number of times. As for the upper Minkowski dimension, we need to take into account all the lower levels of iteration.

Indeed, if we consider the first iteration of $\phi$, it contains the metric space $\mathcal B$. Hence a first bound on the almost sure Hausdorff dimension of $\mathcal X$ would be
\begin{equation}
\dimh(\mathcal X) \geq \dlow.
\end{equation}

\begin{llem}\label{lemmedimH}
Assume that $\mathbb E \left[ \mathcal R_*^{-\delta} \right] < +\infty$ for a $\delta >0$. Let $\zeta$ be drawn on $\mathcal X$ according to measure $\mu$ and $\overline{\zeta}$ be with distribution $\overline{\mu}$ on the same $\mathcal X$ such that $\zb$ is independent from $\zeta$. Then we have:
\begin{equation*}
\begin{split}
&\mathbb P \left (d(\rho,\zb)<r\right) = O (r^\kappa) \\
& \mathbb P \left (d(\zeta,\zb)<r\right) = O (r^\kappa)
\end{split}
\end{equation*}
when $r$ is close to $0$, for a $\kappa>0$.
\end{llem}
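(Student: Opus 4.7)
The plan is to exploit the iterated size-biased structure characterising $\overline{\mu}$ (Proposition \ref{propmubarre}) to obtain a good almost sure lower bound on $\overline{Y} := d(\rho, \overline{\zeta})$. Encode $\overline{\zeta}$ by a random infinite path $\Sigma = \overline{J}_1 \overline{J}_2 \cdots \in \Theta$ obtained by successive size-biased picks, i.e.\ conditionally on the structural data at the corresponding node, $\overline{J}_k$ is drawn from $\mathcal R^{(\Sigma_{k-1})}$. Unfolding the decomposition of $\mathcal X$ through \eqref{eq:descrdist1}--\eqref{eq:descrdist2} along the path, the block-length contributions accumulate and give, for every level $k \geq 1$,
\[
\overline{Y} \;\geq\; \mathcal R_{\Sigma_{k-1}}^\alpha \, \mathcal L_*^{(k)}, \qquad \mathcal L_*^{(k)} := \mathcal L^{(\Sigma_{k-1})}(\emptyset, \overline{J}_k)
\]
(with the convention that the term vanishes when $\overline{J}_k = \emptyset$). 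Because the families $(\mathcal R^{(\theta)}, \mathcal S^{(\theta)}, \mathcal L^{(\theta)})$ are i.i.d.\ over $\theta \in \Theta$, the pairs $(\mathcal R_*^{(k)}, \mathcal L_*^{(k)})$, with $\mathcal R_*^{(k)} := \mathcal R^{(\Sigma_{k-1})}_{\overline{J}_k}$, are i.i.d.\ in $k$, and $\mathcal L_*^{(k)}$ is in particular independent of $\mathcal R_{\Sigma_{k-1}} = \prod_{j < k} \mathcal R_*^{(j)}$, which depends only on lower-level data.

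Hypothesis \eqref{hypLnonnul} combined with $\mathcal R_i > 0$ a.s.\ for every $i \in \Gamma$ yields constants $c_0, p_0 > 0$ with $\mathbb P(\mathcal L_*^{(k)} \geq c_0) \geq p_0$. Let $K := \min\{k \geq 1 : \mathcal L_*^{(k)} \geq c_0\}$. By i.i.d.ness of the $\mathcal L_*^{(k)}$ one has $\mathbb P(K > n) \leq (1-p_0)^n$, and on $\{K = k\}$ the lower bound above gives $\overline{Y} \geq c_0 \mathcal R_{\Sigma_{k-1}}^\alpha$. Splitting $\{\overline{Y} < r\}$ according to whether $K \leq n$ or $K > n$ and applying Markov's inequality with $\mathbb E[\mathcal R_{\Sigma_{k-1}}^{-\delta}] = \mathbb E[\mathcal R_*^{-\delta}]^{k-1}$, one obtains
\[
\mathbb P(\overline{Y} < r) \;\leq\; (1-p_0)^n \;+\; C\, r^{\delta/\alpha}\, \mathbb E[\mathcal R_*^{-\delta}]^{n}
\]
for every integer $n \geq 1$. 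Choosing $n = \lceil \gamma \log(1/r) \rceil$ with $\gamma > 0$ small enough that $\delta/\alpha - \gamma \log \mathbb E[\mathcal R_*^{-\delta}] > 0$ renders both terms polynomial in $r$, giving $\mathbb P(\overline{Y} < r) = O(r^\kappa)$ for some explicit $\kappa > 0$ depending only on $\delta, \alpha, p_0$ and $\mathbb E[\mathcal R_*^{-\delta}]$.

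The two-point claim follows by running two independent size-biased processes -- the path of $\zeta$ via independent picks from $\mathcal S^{(\cdot)}$, that of $\overline{\zeta}$ via picks from $\mathcal R^{(\cdot)}$ -- and letting $K$ now denote the first level at which the two paths diverge. The non-degeneracy of $(\mathcal R, \mathcal S)$ ensures $K$ has an exponential tail, and on $\{K = k\}$ with $J_k \neq \overline{J}_k$ the distance admits the analogous lower bound
\[
d(\zeta, \overline{\zeta}) \;\geq\; \mathcal R_{\Sigma_{k-1}}^\alpha\, \mathcal L^{(\Sigma_{k-1})}(J_k, \overline{J}_k),
\]
so the same geometric-tail-plus-Markov argument yields the polynomial bound. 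The main technical care lies in the bookkeeping of independence: within a single level, $\mathcal R_*^{(k)}$ and $\mathcal L_*^{(k)}$ are generally dependent, yet only the across-level independence is needed to make the product identity $\mathbb E[\mathcal R_{\Sigma_{k-1}}^{-\delta}] = \mathbb E[\mathcal R_*^{-\delta}]^{k-1}$ hold and to decouple the two competing terms in the optimisation over $n$.
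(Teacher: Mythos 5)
Your argument for the first bound $\mathbb P(d(\rho,\zb)<r)=O(r^\kappa)$ is correct and structurally the same as the paper's: trace the size-biased genealogy of $\zb$, extract at each level an i.i.d.\ block-length contribution of the form $\mathcal L^{(\Sigma_{k-1})}(\emptyset,\overline J_k)$ which dominates $\zb$'s height after rescaling by $\mathcal R_{\Sigma_{k-1}}^\alpha$, let $K$ be the geometric waiting time for a macroscopic such contribution, and control $\mathcal R_{\Sigma_{K-1}}$ via Markov and the negative moment of $\mathcal R_*$. The paper formulates the last step by applying Markov at the random time $T$ and computing $\mathbb E\bigl[\prod_{k\le T}\mathcal R_{*,k}^{-\alpha\gamma}\bigr]=\sum_n\mathbb P(T=n)\,\mathbb E[\mathcal R_*^{-\alpha\gamma}]^n$; you instead truncate at a deterministic level $n$, sum the Markov bounds level by level, and then optimise $n=\lceil\gamma\log(1/r)\rceil$. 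Both work, but your variant has the merit that each Markov bound involves only $\mathcal R_{\Sigma_{k-1}}$ at a fixed level and never the stopping time, so you do not need to address whether $T$ is independent of the $\mathcal R_{*,k}$'s (which, since $\mathcal L^{(\theta)}$ and $\mathcal R^{(\theta)}$ at the same $\theta$ are generally dependent, is not immediate).

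For the two-point bound the proposal has a genuine gap. Letting $K$ be the first level at which the $\mathcal S$-driven path of $\zeta$ and the $\mathcal R$-driven path of $\zb$ separate, the lower bound $d(\zeta,\zb)\ge\mathcal R_{\Sigma_{K-1}}^\alpha\,\mathcal L^{(\Sigma_{K-1})}(J_K,\overline J_K)$ is correct, but on $\{K=k\}$ the quantity $\mathcal L^{(\Sigma_{k-1})}(J_k,\overline J_k)$ may well be $0$ (nothing prevents two distinct marked points of the block from being at distance $0$), and in that case the bound is vacuous. Once the two paths have diverged you can no longer iterate the same step, since at deeper levels the geodesic from $\zeta$ to $\zb$ enters the current subspace through its root, not through a sibling marked point. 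So ``the same geometric-tail-plus-Markov argument'' does not directly apply: you have shown $K$ has a geometric tail, but not that a macroscopic $\mathcal L$-contribution occurs at a level with geometric tail. The paper avoids this by following the $\zb$-path alone and using, at each level, the i.i.d.\ sequence
\[
L_n:=\min\left\{\mathcal L^{(\Sigma_{n-1})}(J_n,i_n),\,\mathcal L^{(\Sigma_{n-1})}(\emptyset,i_n)\right\},
\]
which is a valid lower bound for the block-traversal at level $n$ whether the geodesic exits the subspace through a branchpoint (the $\mathcal S$-sampled $J_n$) or through the root, and whose i.i.d.\ structure in $n$ feeds directly into the same waiting-time argument as the one-point case. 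If you adopt this choice of $L_n$ in place of your divergence-time construction, the rest of your argument goes through unchanged.
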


\begin{proof}
For every $n \in \mathbb N$, we denote by $i_n$ the random variable in $\Gamma$ such that after iterating $n$ times, we can locate $\zb \in \mathcal X^{i_1 \dots i_n}$ (again we omit to precise the canonical projections). It is almost surely unique, and allows us to trace the genealogy of the random point $\zb$.

\paragraph{Bound on $d(\rho,\zb)$} Now consider the sequence of random variables
\begin{equation}
\left(\mathcal L^{(i_1 \dots i_{n-1})} \left( 1 , i_n \right)\right)_{n \in \mathbb N}
\end{equation}
which are iid random variables with the same distribution as $\mathcal L \left( 1 , I \right)$ where we define $I$ such that $\mathcal P \left(I=k \vert \mathcal R , \mathcal S ,\mathcal L \right) = \mathcal R_k$ for all $k \in \Gamma$, as in the definition of $\mathcal R_*$ in Section \ref{sec:resultdimf}. From Hypothesis \eqref{hypLnonnul}, we have $\mathbb E \left[\mathcal L \left( 1 , I \right) \right]>0$.

Let $\eta:=\mathbb E \left[\min\left\{ \mathcal L \left( 1 , I\right),1\right\}  \right] \in (0,+\infty)$. We consider the random time
\begin{equation*}
T:=\min\left\{ n \in \mathbb N : \mathcal L ^{(i_1 \dots i_{n-1})} \left( 1 , i_n \right) >\eta \right\},
\end{equation*}
its distribution is geometric with parameter $p=\mathbb P \left(\min\left\{ \mathcal L \left( 1 , I\right),1\right\} >\eta\right) \in (0,1)$.

For all $n\in \mathbb N$, we have
\begin{equation*}
d(\rho,\zb) \geq \mathcal L ^{(i_1 \dots i_{n-1})} \left( 1 , i_n \right) \left(\prod_{k=1}^{n} \mathcal R_{i_k}^{(i_1 \ldots i_{k-1})}\right)^\alpha.
\end{equation*}
This bound is obtained by onyl considering the portion of the geodesic between $\rho$ and $\zb$ inside the subspace $\mathcal X^{i_1 \dots i_{n}}$. Hence because $T$ is a stopping time for the filtration generated by $(\mathcal R^{(\theta)} , \mathcal S^{(\theta)} , \mathcal L^{(\theta)})_{\theta \in \Theta_n}$, the same inequality holds at the random rank $T$ where we get
\begin{equation*}
d(\rho,\zb) \geq \eta \left(\prod_{k=1}^{T} \mathcal R_{i_k}^{(i_1 \ldots i_{k-1})}\right)^\alpha
\end{equation*}
Then we obtain for a parameter $\gamma>0$
\begin{equation}\label{bounddrzbE}
\mathbb P \left(d(\rho,\zb) \leq r \right) \leq \mathbb P \left( \prod_{k=1}^T\left( \mathcal R_{*,k} \right)^{-\alpha} \geq \frac{\eta}{r} \right) \leq \mathbb E \left[ \prod_{k=1}^T\left(\mathcal R_{*,k} \right)^{-\alpha \gamma} \right] \left(\frac{r}{\eta} \right)^\gamma,
\end{equation}
where $\left(\mathcal R_{*,k} \right)_k$ is a sequence of iid copies of $\mathcal R_{*} $. Furthermore, by separating on the events $\{T=n\}$, and by independence of the $\mathcal R_{*,k}$:
\begin{equation*}
\mathbb E \left[ \prod_{k=1}^T\left( \mathcal R_{*,k} \right)^{-\alpha \gamma} \right]=\mathbb E \left[ \exp \left(\alpha \gamma \sum_{k=1}^T -\log \mathcal R_{*,k}  \right) \right]= \sum_{n=1}^\infty \mathbb P (T=n) \bbE \left[ \mathcal R_*^{-\alpha \gamma}\right]^n,
\end{equation*}
which is finite if we choose $\gamma$ such that $\alpha \gamma \leq \delta$.

\paragraph{Bound on $d(\zeta,\zb)$}
Again we study the space $\mathcal X$ after $n$ iterations, and focus on the portion of the geodesic between $\zeta$ and $\zb$ inside the subspace in which falls $\zb$ namely $\mathcal X^{i_1 \dots i_{n}}$. The distance $Z_n$ it covers, if we omit rescaling in a first look, is either an independent copy of $d(\zeta,\zb)$, or an independent copy of $d(\rho,\zb)$. To be more precise, we refer to the tools in the proof of \cite[Lemma 16]{broutin2016self}: a copy of $d(\zeta,\zb)$ appears whenever the geodesic exits $\mathcal X^{i_1 \dots i_{n}}$ via the root $\rho^{i_1 \dots i_{n}}$ of this subset, whereas we have $d(\zeta,\zb)$ when the exit point of $\mathcal X^{i_1 \dots i_{n}}$ is a branchpoint $\eta_i$ at a level of iteration lesser than $n$ or when $\zeta \in \mathcal X^{i_1 \dots i_{n}}$.

After these considerations, we want to follow the proof for the bound on $d(\rho,\zb)$ and consider the sequence $(L_n)_n$ of random variables such that for all $n$:
\begin{equation*}
L_n := \min \left\{\mathcal L^{(i_1 \dots i_{n-1})} \left( J_n , i_n \right), \mathcal L^{(i_1 \dots i_{n-1})} \left( 1 , i_n \right)\right\},
\end{equation*}
with $J_n$ such that $\mathbb P \left(J_n =k \vert \mathcal R^{(i_1 \dots i_{n-1})},\mathcal S^{(i_1 \dots i_{n-1})},\mathcal L^{(i_1 \dots i_{n-1}}\right)= \mathcal S^{(i_1 \dots i_{n-1})}_k$. This sequence is iid: $L_n$ only depends on $(\mathcal R^{(i_1 \dots i_{n-1})},\mathcal S^{(i_1 \dots i_{n-1})},\mathcal L^{(i_1 \dots i_{n-1})})$.

Again $\bbE [\min\{L_n,1\}] \in (0,+\infty)$, and the remaining of the proof is the same as in the previous case.
\end{proof}

\begin{proof}[Proof of Theorem \ref{thmborndimh}]
The bound $\dimh \left( \mathcal X \right) \geq \dlow$ is immediate: by iterating once $\mathcal X$ contains a copy of $\mathcal B$. It remains to prove that $\dimh\left(\mathcal X\right) \geq \frac{1}{\alpha}$. To do so, We fix a $\gamma < \frac{1}{\alpha}$ with which we want to apply the mass distribution principle stated in Equation \eqref{frostman}. The measure on which we apply it will be the $\overline{\mu}$ we constructed on $\mathcal X$ in Proposition \ref{propmubarre}.

The remaining of the proof can be directly adapted from \cite[Theorem 5]{broutin2016self}. We refer to their proof for the details and only focus here on the notable differences.

Indeed, we have to bound $\mathbb P \left( \overline{\mu} (B_r(\zb))>r^\gamma\right)$ by a power of $r>0$ in order to apply Borel-Cantelli Lemma. The main difference between the two proofs lies on the fact that we look at balls centered in points drawn according to $\overline{\mu}$ rather than $\mu$. This only difference will have the same consequences since our Lemma \ref{lemmedimH} is similar to \cite[Lemma 18]{broutin2016self}, without the result concerning $d(\rho,\zeta)$ which we do not need when we sample according to $\overline{\mu}$.
\end{proof}

\section{Proof of the applications}
\label{sec:appliproof}

\subsection{Stable looptrees}
\label{sectionlooptrees}

\subsubsection{Looptree encoding of a càdlàg excursion with positive jumps and $\beta$-stable looptrees}
\label{sectioncodage}

Let $f$ be such a right-continuous with left limits (càdlàg) excursion taking values in $\mathbb R$, with positive jumps. We denote by $\Delta_t:=f(t)-f(t-) \geq 0$ the height of the jump at time $t \in (0,1]$, and set $\Delta_0:=0$.

We define a partial order on $[0,1]$ denoted by $\preccurlyeq$ by setting:
\begin{equation*}
s \preccurlyeq t \mbox{ if } s \leq t \mbox{ and } f(s-) \leq \inf_{[s,t]} f.
\end{equation*}
For any $s,t \in [0,1]^2$, let $s \wedge t$ be the most recent common ancestor of $s$ and $t$ with respect to the relation $\preccurlyeq$. We also note for any $s,t$ such that $s \preccurlyeq t$:
\begin{equation*}
x_s^t := \inf_{[s,t]} f - f(s-) \in [0,\Delta_s],
\end{equation*}
it will further describe the relative position of $t$ on the jump at level $s$. For every $t \in [0,1]$, we equip $[0,\Delta_t]$ with the pseudo-distance $\delta_t$ defined by
\begin{equation*}
\delta_t (a,b) := \min \{ \vert a-b \vert , \Delta_t - \vert a-b \vert \}, \mbox{ for } a,b \in [O,\Delta_t].
\end{equation*}
If we write $s \prec t$ if $0 \leq s \leq t \leq 1$ with $s\preccurlyeq t$ and $s \neq t$, we set now for $s \preccurlyeq t$:
\begin{equation*}
d_0 (s,t) := \sum\limits_{s \prec r \preccurlyeq t} \delta_r(0,x_r^t),
\end{equation*}
and with that we can define the distance between any two points $s,t \in [0,1]$:
\begin{equation}\label{pseudodloop}
d(s,t):= \delta_{s \wedge t} (x^s_{s \wedge t},x^t_{s \wedge t}) + d_0( s \wedge t,t) = d_0 (s \wedge t,s).
\end{equation}

Equation \eqref{pseudodloop} defines a pseudo-distance on $[0,1]$. And we construct the looptree associated to $f$ by taking the quotient of $[0,1]$ by this pseudo-distance $d$. Intuitively, the looptree associated to a function $f$ consists in loops glued together in a tree shape, each loop being associated with a jump of the process.

\paragraph{The $\beta$-stable looptree} From now on, let $\beta\in (1,2)$ be a fixed parameter. We will consider a \emph{$\beta-$stable Lévy process} as presented in \cite{curien2014random} and we refer to \cite{MR1406564} for the proofs. All the processes we consider still have only positive jumps.

Let $\Xexc$ be the normalized excursion of this \emph{$\beta-$stable Lévy process} above its infimum, which has only positive jumps, as defined in \cite{MR1465814}. Notice that $\Xexc$ is almost surely a random càdlàg function on $[0,1]$ such that $\Xexc_0=\Xexc_1=0$ and $\Xexc_s>0$ for all $s \in (0,1)$.

We now apply the (deterministic) construction of the beginning of this section to the random càdlàg function $\Xexc$. The \emph{random $\beta$-stable looptree} is defined as the quotient metric space
\begin{equation}
	\mathscr L_\beta := \left([0,1] /\sim , d \right),
\end{equation}
where $\sim$ represents the equivalence relation $x \sim y$ if $d(x,y)=0$ with $d$ as in Equation \eqref{pseudodloop}. It is a fact that $\mathscr L_\beta$ is almost surely compact (see \cite{curien2014random} the discussion after Definition 2.3).

For more details on stable looptrees, we refer one more time to \cite[Section 2]{curien2014random}. Note that as it is defined in the original setting, $\mathscr L_\beta$ is a random variable on the set of Gromov-Hausdorff isometry classes of compact metric spaces. As we work with $\gp$-equivalence classes in this article, we add a probability measure and a root on the $\beta$-stable looptree. Following \cite[Section 3.3.2]{curien2014random}, we endow $\mathscr L_\beta$ with the probability measure $\mu$ obtained as the push-forward of the Lebesgue measure on $[0,1]$ by the canonical projection mapping $[0,1]$ to $\mathscr L_\beta$, and add a root $\rho$ as the image of $0$ by the same canonical projection.

From now on, $\mathscr L_\beta= \left([0,1]/\sim , d , \mu , \rho \right)$ is considered a rooted measured metric space, and more generally $\mathscr L_\beta$ will denote its $\gp$-equivalence class.

\subsubsection{The random $\beta$-stable looptree as the unique fixed-point of a distributional equation}

\begin{proof}[Proof of Theorem \ref{thmlooptreev2}]
Most of the computation of the distributions involved for this theorem are obtained from \cite[Theorem 3.8]{chee2018recursive} by Chee, Rembart and Winkel. In this article, the authors construct the $\beta$-stable tree $\mathcal T_\beta$ as the solution of a recursive distribution equation, glueing together countably many  iid copies of $\mathcal T_\beta$, rescaled by some random scaling parameters which distribution is made explicit. The scaling factors they obtain are linked directly to the genealogy of the stable tree which is induced by the $\beta$-stable excursion coding the tree and the partial order it induces on $[0,1]$ as recalled in Section \ref{sectioncodage}. The $\beta$-stable looptree coded from this same excursion process then shares the same splitting of mass: we obtain the scaling factors as in Equation \eqref{parameterslooptree}.

When we rescale looptrees, we multiply measures by $c$ and distances by $c^{1 / \beta}$.

The authors of \cite{chee2018recursive} were not interested in the size of the loop nor in the repartition of the substructures around it since it does not appear when we look at the stable tree. From \cite[Equation (1)]{MR2123249} and the linked remark of \cite{curien2014random}, one can recover the \emph{width of the hubs} of the stable tree with the formula
\begin{equation*}
L(u) = \lim\limits_{\varepsilon \rightarrow 0} \frac{1}{\varepsilon} \mu_{\mathcal T_\beta} \left\{v \in \mathcal T_\beta : d_{\mathcal T_\beta} \left(u,v \right) < \varepsilon \right\} \mbox{ a.s.}
\end{equation*}
for a node $u$ in $\mathcal T_{\beta}$. The width of nodes are in one to one correspondence with the jumps of the $\beta$-stable excursion coding the tree (Proposition 2 in \cite{MR2123249}), and we have $\Delta_s=L(u)$ where $u$ is the image of the point $s \in [0,1]$ in the tree. When we come back to the $\beta$-stable looptrees, each jump of height $\Delta_s$ of the excursion is sent to a circle with lenght $\Delta_s$ (again from \cite{curien2014random}, see the discussion at the end of Section 2). It remains to determine the law of the local time, or the \emph{width}, of the internal node around which rescaled stable trees are glued in the construction of \cite{chee2018recursive} which will be the length of the circle for our case of the $\beta$-stable looptree $\mathscr L_\beta$. From \cite{pitman2006combinatorial} we know that it will be a rescaled mutiple of the $1/\beta$-diversty of the sequence $(\mathcal R_i)_i$:
\begin{equation*}
\lim\limits_{i \rightarrow \infty} \Gamma \left(1- \frac{1}{\beta} \right) i \left(\mathcal R_i \right)^{\frac{1}{\beta}} = X_3^{\frac{1}{\beta}} \lim\limits_{i \rightarrow \infty} \Gamma \left(1- \frac{1}{\beta} \right) i \left(P_i^{\downarrow} \right)^{\frac{1}{\beta}},
\end{equation*}
from the expression for the scaling parameters $\mathcal R_i$ of the stable Looptree. 
Remark againt that it is distributed as $X^{\frac{1}{\beta}}$ times a random variable which distribution is $\mittagl \left( \frac{1}{\beta} , \frac{2}{\beta}-1 \right)$.

Lastly concerning the uniform position at which each sub-looptree is plugged onto the circle, it is deduced from the invariance by rerooting observed in \cite{curien2014random}.
\end{proof}

\begin{proof}[Proof of Corollary \ref{cor:ltpointfixe}]
This corollary is the application of Theorem \ref{thmppl} to the parameters of the stable looptree described above. In order to have a characherization of the $\beta$-stable looptree as a fixed-point, one needs to verify hypotheses \eqref{hypLnonnul} and \eqref{condsufeqnY} with $p=1$. The last one is straightforward since $\mathcal L(\emptyset,J)$ has a Mittag-Leffler distribution up to multiplication by bounded random variables.

From \cite{chee2018recursive}, in which the authors obtain a fixed-point equation for the $\beta$-stable tree in a setup falling out of our scope (there is no added distance at each step), Theorem 3.8, we can recover the same parameters $\mathcal R_i$ that are described in Section \ref{subsubs:lt}. The identity for the $\beta$-stable obtained by the authors gives an identity for the distance $Z$ between the root and a random point:
\begin{equation}
Z \loi \sum_i \mathbf 1_{J \in \Gamma_i} \mathcal R_i^{1-\frac{1}{\beta}}Z^{(i)},
\end{equation}
in which all the terms present are similar to what we encountered in Equation \eqref{eqndptunif} without a term in $\mathcal L$. We take the mean:
\begin{equation}
\bbE \left[\sum_i \mathbf 1_{J \in \Gamma_i} \mathcal R_i^{1-\frac{1}{\beta}}\right]=1.
\end{equation}
Note that the exponent $1-\frac{1}{\beta}$ is the scaling for the $\beta$-stable tree, which lies in $\left( 0, \frac{1}{2}\right)$.

Concerning the stable looptree, we recall that the parameters $\mathcal R_i$ are the same since they come directly from the stable process coding both the tree and the looptree, but we take them to the power $\frac{1}{\beta} \in \left(\frac{1}{2},1\right)$. By monotonicity of the function
\begin{equation}
x \mapsto \bbE \left[\sum_i \mathbf 1_{J \in \Gamma_i} \mathcal R_i^{x} \right],
\end{equation}
we conclude that $\bbE \left[\sum_i \mathbf 1_{J \in \Gamma_i} \mathcal R_i^{\frac{1}{\beta} } \right]<1$ as required.
\end{proof}

\subsubsection{Geometric results}
\begin{proof}[Proof of Corollary \ref{cordimLT}] The metric space $\mathcal B$ on which the iid copies are glued onto is isometric to a circle. It follows that Hypotheses \eqref{hypdimms} and \eqref{hypdimhblock} are satisfied with the constant $\dup=\dlow=1$. We may now apply both theorems relative to the bounds on fractal dimensions:
\paragraph{Upper bound on $\dimms(\mathscr L_\beta)$}
Hypothesis \eqref{hypdimms} is satisfied with $\dup=1$ which is the Minkowski dimension of a circle. Hypothese \eqref{hyphaut} holds as the term $\sup_i \mathcal L(\emptyset,i)$ has Mittag-Leffler distribution, which has finite moments of all orders. The last hypothesis needed to apply Theorem \ref{thmborndimms} is a consequence of Proposition \ref{propfiniteH}: the structural tree $\Gamma$ has height $2<+\infty$. This gives the first bound
$$\dimms(\mathscr L_\beta) \leq \beta=\max\{\beta,1\}$$

\paragraph{Lower bound on $\dimh(\mathscr L_\beta)$}

With the description of the parameters from Theorem \ref{thmlooptreev2}, for a $\delta>0$
\begin{equation}
\bbE \left[ \mathcal R_* ^{-\delta} \right] = \bbE \left[ X_0 ^{1-\delta}+X_1 ^{1-\delta} + X_2 ^{1-\delta} + X_3 ^{1-\delta} B^{-\delta} \right],
\end{equation}

where $B$ is a size-biased pick from $(P_i^\downarrow)$ whose distribution is $\lbeta(1-\alpha,3\alpha-1)$ (see Section \ref{dirpoidir}), and the $X_i$ are independent from $B$ with other $\lbeta$ distributions. From there a choice of $\delta< \inf\left\{1-\alpha , 2 \alpha,2-\alpha\right\}$ is always possible and gives the desired negative moment for $\mathcal R_*$. Hypothesis \eqref{hypdimhblock} is satisfied with $\dlow=1$, hence the expected lower bound by Theorem \ref{thmborndimh}.
\end{proof}

\subsection{Stable trees}

\subsubsection{The random $\beta$-stable tree as the unique fixed-point of a distributional equation}

\begin{proof}[Proof of Theorem \ref{thmarbrestable}]

The decomposition we present here is only a reformulation in our own terms of the spinal mass decomposition of the stable tree. Most of it is displayed in \cite[Corollary 10]{haas2009spinal}, to which we re-rooted the tree to get a size biased pick among the subspaces glued together (it is necessary to fit our setup).

The last arguments concerning the height between the root and a random point comes from \cite[Theorem 12]{addario2019inverting}.
\end{proof}

\begin{proof}[Proof of Corollary \ref{corstablet}]
Again we need to verify the hypothesis of Theorem \ref{thmppl}. The moment hypothesis  concerning $\mathcal L (\emptyset,J)$ is straightforward since this random variable is bounded almost surely by a constant times a Mittag-Leffler distribution, which has finite moments at all orders.

As for the second part of Hypothesis \eqref{condsufeqnY} with $p=1$, it can be deduced from the Tree itself without calculating $\bbE \left[\sum_i \mathbf 1_{J \in \Gamma_i} \mathcal R_i^\alpha\right]$. Indeed, on the $\beta$-stable tree, the spinal mass decomposition in which we reroot the tree by uniformly sampling a new root gives us a relation satisfied by the distance between this new root and a random point $Y$:
\begin{equation}\label{eq:corarbrestable}
Y \loi \sum_i \mathbf 1_{J \in \Gamma_i} \mathcal R_i^\alpha Y^{(i)} + \mathcal L(\emptyset,J).
\end{equation}
Since the $Y^{(i)}$s are random variables with distribution $\mittagl\left(1-\frac{1}{\beta},1-\frac{1}{\beta}\right)$ up to a constant, it remains when we take the mean in Equation \eqref{eq:corarbrestable} and simplify
\begin{equation*}
1 = \bbE \left[\sum_i \mathbf 1_{J \in \Gamma_i} \mathcal R_i^\alpha\right]+ \bbE\left[ \vert U_1-U_2 \vert \right],
\end{equation*}
which gives the required condition since the last term is positive.
\end{proof}

\subsubsection{Geometric results}

\begin{proof}[Proof of Corollary \ref{cordimT}]
The iid copies are glued at each step of the recursion onto a segment. Hence we have again $\dup=\dlow=1$. From there, Hypotheses \eqref{hyphaut}, \eqref{hypdimms} and \eqref{hypdimhblock} in the case of the $\beta$-stable follow from the same arguments as for the $\beta$-stable looptree: we refer to the proof of Corollary \ref{cordimLT}.

For $\delta>0$,
\begin{equation}
\bbE \left[ \mathcal R_* ^{-\delta} \right] = \bbE \left[ B_P ^{1-\delta} B_Q^{1-\delta} \right],
\end{equation}
with $B_P$ and $B_Q$ corresponding each to the size-biased picks from the two levels of independent Poisson-Dirichlet distributions. From the remark in Section\ref{dirpoidir}, they are independent random variables and have $\lbeta$ distributions. We can deduce that $\bbE \left[ \mathcal R_* ^{-\delta} \right]< \infty$ for $d < \min \left\{\frac{1}{\beta},1-\frac{1}{\beta}\right\}$, which is always greater than $0$. This is enough to provide the negative moment condition of Theorem \ref{thmborndimh}.
\end{proof}

\bibliographystyle{plain}
\bibliography{existuniqkgp.bbl}

\appendix
\section{Measurability of $\psi$}

\begin{llem} \label{lem:mes}
The application $\psi: (\kgp)^\Gamma \times \Sigma_\Gamma^2 \times D_\Gamma \rightarrow \mathcal M_1(\kgp)$  is measurable.
\end{llem}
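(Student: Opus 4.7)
The plan is to reduce the question to the measurability of the finite-dimensional distance matrix distributions of the output. By Proposition \ref{critereCVmesures} $(i)$, a probability measure on $\kgp$ is determined by its distance matrix distribution, and the map $\mathbf P \mapsto \nu^{\mathbf P}$ from $\mathcal M_1(\kgp)$ to $\mathcal M_1(\mathbb M_{\mathbb N})$ is a measurable embedding for the weak topologies. Moreover a probability measure on $\mathbb M_{\mathbb N}$ is determined by its finite-dimensional projections. It therefore suffices to show that for every $n \geq 1$, the map
\[
(\mathfrak X_i, \mathbf r, \mathbf s, L) \longmapsto \nu_n^{\psi((\mathfrak X_i), \mathbf r, \mathbf s, L)} \in \mathcal M_1(\mathbb M_n)
\]
is measurable. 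This is analogous to the argument used in \cite[Lemma 29]{broutin2016self} that the paper already invokes for the measurability of $\mathcal X$ as a $\kgp$-valued random variable.

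Next I would make $\nu_n^{\psi(\ldots)}$ explicit. On an auxiliary probability space, draw iid indices $J_1, \ldots, J_n$ with $\bbP(J_j = i) = s_i$, for each $i \in \Gamma$ an exit point $\eta_i$ and auxiliary points $\tilde\zeta_{i,1}, \ldots, \tilde\zeta_{i,n}$ iid with distribution $\mu_i$, and set $\zeta_j := \varphi^\circ(\tilde\zeta_{J_j, j})$. Using \eqref{eq:descrdist1} and \eqref{eq:descrdist2}, the matrix $M_n = (d(\zeta_j,\zeta_k))_{0 \leq j,k \leq n}$ is, conditionally on $(J_1,\ldots,J_n) = (i_1,\ldots,i_n)$, a fixed Borel function of $\mathbf r$, $L$ and of a finite family of intra-block distances, namely distances inside each $X_k$ between $\rho_k$, $\eta_k$ and the $\tilde\zeta_{k,j}$ for $j$ such that $i_j = k$. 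Hence
\[
\nu_n^{\psi(\ldots)}(A) \;=\; \sum_{(i_1,\ldots,i_n) \in \Gamma^n} \Big(\prod_{j=1}^n s_{i_j}\Big)\, F_{(i_1,\ldots,i_n)}\!\bigl(\mathbf r, L;\, (\nu_{m_k}^{\mathfrak X_k})_{k \in \Gamma}\bigr)(A),
\]
where $m_k$ is the number of relevant points falling into $X_k$ (at most $n+2$, including $\rho_k$ and $\eta_k$) and $F_{(i_1,\ldots,i_n)}$ is the push-forward under the explicit deterministic glueing formula of a product of sampled distance matrices.

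To conclude, each term on the right is measurable in $(\mathfrak X_i, \mathbf r, \mathbf s, L)$: the weights $\prod_j s_{i_j}$ are continuous in $\mathbf s$; the matrix $L$ enters continuously; and the finite-dimensional distributions $\nu_{m_k}^{\mathfrak X_k}$ depend measurably on $\mathfrak X_k \in \kgp$ by Gromov's reconstruction theorem, as made precise in \cite[Proposition 2.6 and Theorem 5]{greven2009convergence}. The deterministic glueing map is Lipschitz in these distances. A countable sum of measurable $\mathcal M_1(\mathbb M_n)$-valued functions is measurable (the partial sums give subprobability measures increasing to the total, and the tail mass is controlled by $\sum_{\vert (i_1,\ldots,i_n) \vert > N} \prod s_{i_j} \to 0$ as $N \to \infty$). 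The main technical point is precisely this handling of the possibly infinite index set $\Gamma$, but since all sums involved are over a countable set and all integrands are Borel, no further difficulty arises.
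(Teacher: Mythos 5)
Your proposal is correct but follows a genuinely different route from the paper's own proof. The paper truncates the index set: for an indexation of $\Gamma$ by $\mathbb N$ respecting the genealogy, it defines $\psi^{(I)}$ by gluing only the first $I+1$ inputs (replacing the rest by singletons, which still carry mass), observes that each $\psi^{(I)}$ is continuous (via an analogue of \cite[Lemma 30]{broutin2016self}, which handles finitely many exit points), and then shows $\psi^{(I)} \to \psi$ pointwise by coupling the two constructions and invoking Strassen's theorem to bound the Prokhorov distance by $\sum_{i>I} s_i$. Measurability of $\psi$ then follows as a pointwise limit of continuous maps. Your approach instead works directly with the finite-dimensional distance matrix distributions: you reduce measurability of a $\mathcal M_1(\kgp)$-valued map to measurability of each $\nu_n$-marginal (legitimate because the Gromov-weak topology coincides with the $\gp$ topology, so the maps $\mathbf P\mapsto\nu_n^{\mathbf P}$ generate the Borel $\sigma$-algebra), then express $\nu_n^{\psi(\cdots)}$ explicitly as a countable convex combination over $\Gamma^n$ of pushforwards of products of input matrix distributions under deterministic gluing formulas, each term of which is measurable in the inputs. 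This is essentially the same bookkeeping as the paper's proof of Proposition \ref{propuniq} (the decomposition of $W_n$ conditionally on where the sampled points fall), so your proof makes explicit a piece of structure that the paper exploits later anyway. Both routes are sound; the paper's is shorter but leans on Strassen, while yours gives a more concrete picture of the output law and avoids the approximation step. Two small remarks: the bound $m_k\leq n+2$ is a slight overcount (you need at most $n+1$ sampled points in a given block, counting $\eta_k$, since $\rho_k$ is the fixed $0$-th index of $\nu_{m_k}^{\mathfrak X_k}$ and not one of the sampled points), and "Lipschitz" for the gluing map should be read as continuity of the induced pushforward on laws, which is what the argument actually needs; neither affects the conclusion.
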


\begin{proof}
Let us consider an indexation of $\Gamma$ on $\mathbb N$ respecting the genealogy of the tree. It means that for any integer indices $(i,j) \in \Gamma$, if $i$ is an ancestor of $j$, then $i\leq j$.

For $I \in \mathbb N$, consider the application $\psi^{(I)}$ defined on the same space as $\psi$ as follows:
\begin{equation*}
\psi^{(I)}((\mathfrak X_i)_{i \in \mathbb N},\mathbf r , \mathbf s,\ell))=\psi((\mathfrak X_0, \mathfrak X_1 , \ldots , \mathfrak X_I, \{ \rho_{I+1} \}, \{ \rho_{I+2} \} , \ldots) ,\mathbf r , \mathbf s,\ell)),
\end{equation*}
where the $\{ \rho_{k} \}$-s are singleton spaces. In other words, $\psi^{(I)}$ is the distribution of the $I+1$ first spaces $\mathfrak X_i$ glued together respecting the same rule as $\psi$, to which are added atoms of mass $s_i$ for all the indices $i>I$.

This application is continuous: it can be seen as a straightforward application of Lemma 30 in the appendix of \cite{broutin2016self} since we are concerned only with a finite number of spaces and hence a finite number of exit points $\eta_i$ to couple.

For the same fixed $Y:=\left((\mathfrak X_i)_{i \in \mathbb N},\mathbf r , \mathbf s,\ell \right)$, there is a natural way to couple random variables with respective distribution $\psi \left( Y \right)$ and $\psi^{(I)} \left( Y \right)$ by simply taking the output of the constructions with the same choice for each $\eta_i$ with $i\leq I$. Now conditionally
on these two random variables, we couple the two measures $\mu$ and $\mu^{(I)}$ of the random rooted measured metric spaces as follows: first sample $J$ with distribution $\mathbb P(J=k)=s_k$ and $\zeta_i$ with distribution $\mu_i$ on each $\mathfrak X_i$, then we define $\zeta:= \zeta_J$ which has distribution $\mu$ and $\zeta^{(I)}$ as $\zeta_J$ if $J\leq I$ and equal to $\rho_J$ if $J>I$.

Let $Z:= \mathfrak X \sqcup \cup_{j>I} \{ \rho'_{j} \}$ on which we define $d^Z$ as equal to $d$ on $\mathfrak X\times \mathfrak X$ and equal to $d^{(I)}$ on $\mathfrak X^{(I)} \times \mathfrak X^{(I)}$ which appears as the image of the first $I+1$ spaces $\mathfrak X_k$ together with the singletons added further. Both distances match when restricted to these images of the $I+1$ first spaces, and we define $d^Z$ accordingly.For the remaining cases, when $x \in \mathfrak X$ and $y \in \mathfrak X^{(I)}$ we set
\begin{equation*}
d^Z(x,y) := \inf\limits_{x',y'} \{ d(x,x') + d^{(I)}(y,y') + d^Z(x',y')\}
\end{equation*}
where the points $x',y'$ are taken in the image of the first $I+1$ spaces $\mathfrak X_i$ on which we already defined $d^Z$. On this space $Z$ we can compare the random variables $\zeta$ and $\zeta^{(I)}$ (more precisely their image via the isometries from $\mathfrak X$ and $\mathfrak X^{(I)}$ towards $Z$). We immediately have $d^Z(\zeta,\zeta^{(I)})= 0$ whenever $J>I$. Hence, again conditionally on $\mathfrak X,\mathfrak X^{(I)}$, we have $\mathbb P \left(d^Z(\zeta,\zeta^{(I)})>\varepsilon\right) \leq \sum_{i>I}s_i$. For any $\varepsilon>0$.

Now to bound Prohorov (and Gromov-Prohorov) distances, we make several uses of a corollary from Strassen \cite{strassen1965existence} page 438 which allows us to bound such distances by a coupling argument. Let $\varepsilon>0$, for $I$ large enough such that $\sum_{i>I} \leq \varepsilon$ (remember that $\mathbf s$ is fixed here) we have from what precedes $\mathbb P \left(d^Z(\zeta,\zeta^{(I)})>\varepsilon\right) \leq \varepsilon$. Hence the almost sure bound $d_{\gp} (\mu,\mu^{(I)})\leq \varepsilon$ by Strassen's theorem.

Using this theorem one more time with the coupling $\mathfrak X$, $\mathfrak X^{(I)}$ of distributions $\psi(Y)$ and $\psi^{(I)}(Y)$, we obtain the desired bound $d_{\mathrm P} (\psi^{(I)}(Y),\psi(Y)\leq \varepsilon$, which concludes the proof.
\end{proof}

\end{document}